\documentclass[letterpaper,11pt]{article}
\usepackage{fullpage}
\usepackage{amsmath,amssymb,amsthm,enumerate, hyperref} %eucal
\usepackage[noadjust]{cite}
\usepackage{tikz, tikz-cd}
\usetikzlibrary{positioning}
\usepackage{verbatim}
\usepackage[english]{babel}

\hypersetup{hidelinks} %Remove hyperlink box.

%\numberwithin{equation}{section} % option equation number within section

 %% nested itemize bullet characters

%\usepackage[pagewise]{lineno} % line number option
%\linenumbers
%\setpagewiselinenumbers

\theoremstyle{definition}
\newtheorem{definition}{Definition}[section]

\newtheorem{example}[definition]{Example}
\newtheorem{conjecture}[definition]{Conjecture}

\newtheorem{notation}[definition]{Notation}
\newtheorem{note}[definition]{Note}

\theoremstyle{plain}
\newtheorem{theorem}[definition]{Theorem}
\newtheorem{lemma}[definition]{Lemma}
\newtheorem{corollary}[definition]{Corollary}
\newtheorem{proposition}[definition]{Proposition}

%\renewcommand\arraystretch{1.5} 
%\renewcommand\thesubsection{\Alph{subsection}.}

%\typeout{Substyle for letter-sized documents. Released 24 July 1992}

%PAGE LAYOUT

%\usepackage[top=3cm, bottom=3cm, left=2.5cm, right=2.5cm]{geometry}

%\setlength{\parindent}{0pt}

%\setlength{\oddsidemargin}{-0.25in}
%\setlength{\topmargin}{0.25in}
%\setlength{\headheight}{0.0in}
%\setlength{\headsep}{0.0in}
%\setlength{\footheight}{0.0in}
%\setlength{\textwidth}{6.8in} \setlength{\textheight}{8.9in}

%%% = my abbreviation = %%%%%%%%%%%%%%%%
\def\MatF{\operatorname{Mat}_{d+1}(\mathbb{F})}
\def\EndV{\operatorname{End}(V)}
\def\th{\theta}

%%%%%%%%%%%%%%%%%%%%%%%%%%%%%%

%%%%%%%%%%%%%%%%%%%%%%%%%%%%%%%%%%%%%%%%%%%%%%%%%%%%%%%%%%%
%%%%%%%%%%%%%%%%%%%%%%%%%%%%%%%%%%%%%%%%%%%%%%%%%%%%%%%%%%%
%%%%%%%%%%%%%%%%%%%%%%%%%%%%%%%%%%%%%%%%%%%%%%%%%%%%%%%%%%%

\begin{document}

\title{\bf Circular Hessenberg Pairs}

\author {
Jae-Ho Lee\thanks{Department of Mathematics and Statistics, University of North Florida, Jacksonville, FL 32224, U.S.A. 
\newline E-mail: jaeho.lee@unf.edu
}}
%\\ (updated 7/22/2020)
%}
\date{}

\maketitle

\begin{abstract}
A square matrix is called Hessenberg whenever each entry below the subdiagonal is zero and each entry on the subdiagonal is nonzero. 
Let $M$ denote a Hessenberg matrix.
Then $M$ is called circular whenever the upper-right corner entry of $M$ is nonzero and every other entry above the superdiagonal is zero.
A circular Hessenberg pair consists of two diagonalizable linear maps on a nonzero finite-dimensional vector space, that each act on an eigenbasis of the other one in a circular Hessenberg fashion.
Let $A, A^*$ denote a circular Hessenberg pair.
We investigate six bases for the underlying vector space that we find attractive.
We display the transition matrices between certain pairs of bases among the six.
We also display the matrices that represent $A$ and $A^*$ with respect to the six bases.
We introduce a special type of circular Hessenberg pair, said to be recurrent.
We show that a circular Hessenberg pair $A, A^*$ is recurrent if and only if $A, A^*$ satisfy the tridiagonal relations.
For a circular Hessenberg pair, there is a related object called a circular Hessenberg system.
We classify up to isomorphism the recurrent circular Hessenberg systems.
To this end, we construct four families of recurrent circular Hessenberg systems.
We show that every recurrent circular Hessenberg system is isomorphic to a member of one of the four families.

\bigskip
\noindent
\textbf{Keywords:} 
Leonard pair; tridiagonal pair; Hessenberg pair; circular Hessenberg pair.

%\hfil\break
\medskip
\noindent 
\textbf{2020 Mathematics Subject Classification:}
15A04%05E30, 20C08, 33D45, 33D80
\end{abstract}

\section{Introduction}
This paper is about a linear algebraic object called a circular Hessenberg pair.
Before we describe this object, we first give some motivation.
The concept of a Leonard pair was introduced by Terwilliger \cite{2001TerLAA}.
A Leonard pair consists of two diagonalizable linear maps on a nonzero finite-dimensional vector space, that each act on an eigenbasis of the other one in an irreducible tridiagonal fashion; see Definition \ref{Def:LP}. 
A Leonard pair satisfies two relations called the tridiagonal relations  \cite{2001TerLAA}; see Lemma \ref{lem:LP, TDrels} below.
Notable papers about Leonard pairs are \cite{2001TerLAA, 2002TerRMJ, 2004TerLAA, 2005TerJCAM, 2006TerLNM}.
There is a generalization of a Leonard pair called a tridiagonal pair.
The concept of a tridiagonal pair was introduced by Ito, Tanabe, and Terwilliger \cite{2001ITP}. 
A tridiagonal pair satisfies the tridiagonal relations \cite{2001ITP}.
Notable papers about tridiagonal pairs are \cite{2011INT, 2010IT, 2007IT,2009IT,2007IT(2),2001ITP}.
For both Leonard pairs and tridiagonal pairs, there are connections to combinatorics \cite{1984BI, 1992TerJAC}, representation theory \cite{2001ITP, 2007IT, 2007IT(2), 2009IT, 2014BC}, special functions \cite{2006TerLNM, 2006Baseilhac, 2017BGV}, and statistical mechanics \cite{2005BK,2010BS}.
In \cite{2009GodjLAA}, Godjali introduced the concept of a Hessenberg pair as a generalization of a tridiagonal pair. 
In \cite{2010GodjLAA}, he considered a type of Hessenberg pair said to be thin, and he classified the thin Hessenberg pairs up to isomorphism.
In \cite{2017BGV}, Baseilhac, Gainutdinov, and Vu introduced the concept of a cyclic tridiagonal pair as another generalization of a tridiagonal pair.
They used this concept to study a higher-order generalization of the Onsager algebra.

In the present paper, we introduce the concept of a circular Hessenberg pair.
This concept is a special case of both a thin Hessenberg pair and a cyclic tridiagonal pair; see Note \ref{CHPvsCTP}.
Roughly speaking, a circular Hessenberg pair consists of two diagonalizable linear maps on a nonzero finite-dimensional vector space, that each act on an eigenbasis of the other one in a circular Hessenberg fashion; see Definition \ref{Def:CHP}.
We now summarize our results.
Let $A, A^*$ denote a circular Hessenberg pair.
We discuss six bases for the underlying vector space that we find attractive.
We display the transition matrices between certain pairs of bases among the six.
We also display the matrices that represent $A$ and $A^*$ with respect to the six bases.
We introduce a special type of circular Hessenberg pair, said to be recurrent; see Definition \ref{def recurrent CHP}.
We show that a circular Hessenberg pair $A, A^*$ is recurrent if and only if $A, A^*$ satisfy the tridiagonal relations; see Proposition \ref{rec <=> TD rels} and the sentence beneath it.
For a circular Hessenberg pair, there is a related object called a circular Hessenberg system; see Definition \ref{Def:CHS}.
We classify up to isomorphism the recurrent circular Hessenberg systems.
To do this, we give a method for constructing a recurrent circular Hessenberg system; see Theorem \ref{mainthm}.
Using this method, we obtain four families of recurrent circular Hessenberg systems; see Examples \ref{ex:beta not 2, -2}--\ref{ex:beta = 0}.
We prove that every recurrent circular Hessenberg system is isomorphic to a member of one of the four families; see Theorem \ref{thm:four families}.
We conjecture that every circular Hessenberg system is recurrent.

This paper is organized as follows.
In Section \ref{sec:CHpairs}, we introduce the concept of a circular Hessenberg pair and a  circular Hessenberg system. 
We also describe our main results.
In Section \ref{Sec:Phi-split seq}, we consider a circular Hessenberg system. 
We discuss the corresponding eigenvalue sequence, dual eigenvalue sequence, and split sequence.
In Section \ref{Sec:pf Thm(TD=>CHP)}, we prove Theorem \ref{mainthm}.
In Section \ref{sec:4 families}, we display four families of recurrent circular Hessenberg systems. 
We show that any recurrent circular Hessenberg system is isomorphic to a member of one of the four families.
In Section \ref{sec:SB}, we consider any circular Hessenberg pair $A, A^*$. 
We identify six bases for the underlying vector space that we find attractive.
We find the transition matrices between certain pairs of bases among the six.
We also display the matrices that represent $A$ and $A^*$ with respect to each basis.
In Section \ref{sec:scalar_xi}, we have some general comments about recurrent circular Hessenberg systems.
We end the paper with an appendix that contains some formulas involving recurrent sequences.

%%%%%%%%%%%%%%%%%%%%%%%%%%%%%%%%%%%%%%%%
%%%%%%%%%%%%%%%%%%%%%%%%%%%%%%%%%%%%%%%%
%%%%%%%%%%%%%%%%%%%%%%%%%%%%%%%%%%%%%%%%
\section{Circular Hessenberg pairs}\label{sec:CHpairs}
In this section, we introduce circular Hessenberg pairs and circular Hessenberg systems  along with a detailed motivation.
We use the following terms and notation.
Throughout this paper, $\mathbb{F}$ denotes a field.
All algebras and vector spaces discussed in this paper are over $\mathbb{F}$.
We fix an integer $d\geq 3$.
Let $V$ denote a vector space with dimension $d+1$.
Let $\EndV$ denote the algebra consisting of the $\mathbb{F}$-linear maps from $V$ to $V$.
Let $\MatF$ denote the algebra consisting of the $d+1$ by $d+1$ matrices that have entries in $\mathbb{F}$.
We index the rows and columns by $0,1,\ldots, d$.
Let $\mathbb{F}^{d+1}$ denote the vector space consisting of the column vectors of length $d+1$ that have entries in $\mathbb{F}$.
We view $\mathbb{F}^{d+1}$ as a left module for $\MatF$.
Let $\{v_i\}^d_{i=0}$ denote a basis for $V$.
For $B \in \EndV$ and $M\in \MatF$ we say $M$ \emph{represents} $B$ with respect to $\{v_i\}^d_{i=0}$ whenever $Bv_j = \sum^d_{i=0}M_{ij}v_i$ for $0 \leq j \leq d$.
There is an algebra isomorphism $\EndV \to \MatF$ that sends each $B\in \EndV$ to the unique matrix in $\MatF$ that represents $B$ with respect to $\{v_i\}^d_{i=0}$.
A matrix in $\MatF$ is called \emph{tridiagonal} whenever each nonzero entry lies on either the diagonal, the subdiagonal, or the superdiagonal. 
This tridiagonal matrix is called \emph{irreducible} whenever each entry on the subdiagonal is nonzero, and each entry on the superdiagonal is nonzero.
The following matrices are tridiagonal:
\begin{equation*}
	\begin{pmatrix}
	2 & 1 & 0 & 0 \\ 
	3 & 5 & 7 & 0 \\
	0 & 1 & 3 & 6 \\
	0 & 0 & 9 & 2
	\end{pmatrix},
	\qquad
	\begin{pmatrix}
	0 & 1 & 0 & 0 \\ 
	3 & 5 & 7 & 0 \\
	0 & 1 & 0 & 6 \\
	0 & 0 & 9 & 2
	\end{pmatrix},
	\qquad
	\begin{pmatrix}
	2 & 0 & 0 & 0 \\ 
	3 & 5 & 0 & 0 \\
	0 & 0 & 0 & 6 \\
	0 & 0 & 9 & 2
	\end{pmatrix}.
\end{equation*}
Observe that the tridiagonal matrices on the left and middle are irreducible.

\begin{definition}[cf. {\cite[Definition 1.1]{2001TerLAA}}]\label{Def:LP}
By a \emph{Leonard pair on $V$} we mean an ordered pair $A, A^*$ of elements in $\EndV$ such that:
\begin{itemize}
	\item[(i)] There exists a basis for $V$ with respect to which the matrix representing $A$ is diagonal and the matrix representing $A^*$ is irreducible tridiagonal.
	\item[(ii)] There exists a basis for $V$ with respect to which the matrix representing $A^*$ is diagonal and the matrix representing $A$ is irreducible tridiagonal.
\end{itemize} 
\end{definition}

\begin{note}\label{Note1}
It is a common notational convention to use $A^*$ to represent the conjugate-transpose of $A$. 
We are not using this convention. 
In a Leonard pair $A$, $A^*$ the linear maps $A$ and $A^*$ are arbitrary subject to (i) and (ii) of Definition \ref{Def:LP}.
\end{note}

We now recall the notion of a Leonard system.
An element $A \in \EndV$ is said to be \emph{diagonalizable} whenever $V$ is spanned by the eigenspaces of $A$.
The element $A$ is called \emph{multiplicity-free} whenever $A$ is diagonalizable and each eigenspace of $A$ has dimension one.
Note that $A$ is multiplicity-free if and only if $A$ has $d+1$ mutually distinct eigenvalues in $\mathbb{F}$.
Assume that $A$ is multiplicity-free.
Let $\{\theta_i\}^d_{i=0}$ denote an ordering of eigenvalues of $A$.
For $0 \leq i \leq d$, let $V_i$ denote the eigenspace of $A$ corresponding to $\theta_i$.
Further, define $E_i \in \EndV$ such that  $(E_i-I)V_i=0$ and $E_iV_j=0$ if $j\ne i$  $(0\leq j \leq d)$.
Here $I$ denotes the identity of $\EndV$.
We observe that (i) $E_iE_j = \delta_{ij}E_i$ $(0 \leq i, j \leq d)$; (ii) $AE_i = E_iA = \theta_i E_i$ $(0 \leq i \leq d)$; (iii) $\sum^d_{i=0} E_i = I$; (iv) $V_i=E_iV$ ($0\leq i \leq d$).
Moreover, 
\begin{equation}\label{prim idemp}
	E_i = \prod_{\substack{0 \leq j \leq d \\ j \ne i}}\frac{A-\theta_j I}{\theta_i-\theta_j } \qquad (0 \leq i \leq d).
\end{equation}
We call $E_i$ the \emph{primitive idempotent} of $A$ for $V_i$ (or $\theta_i$).

\begin{definition}[cf. {\cite[Definition 1.4]{2001TerLAA}}]\label{Def:LS}
By a \emph{Leonard system} on  $V$ we mean a sequence
\begin{equation*}\label{ALS}
	\Phi=(A; \{E_i\}^d_{i=0}; A^*; \{E^*_i\}^d_{i=0})
\end{equation*}
of elements in $\EndV$ that satisfy the conditions (i)--(v) below.
\begin{itemize}
	\item[(i)] Each of $A$, $A^*$ is multiplicity-free;
	\item[(ii)] $\{E_i\}^d_{i=0}$ is an ordering of the primitive idempotents of $A$;
	\item[(iii)] $\{E^*_i\}^d_{i=0}$ is an ordering of the primitive idempotents of $A^*$;
	\item[(iv)] $E_iA^*E_j = \begin{cases}
	0 & \text{if} \quad |i-j|>1; \\
	\ne 0 & \text{if} \quad |i-j|=1
	\end{cases}
	\qquad \qquad (0 \leq i, j \leq d)$;
	\item[(v)] $E^*_iAE^*_j = \begin{cases}
	0 & \text{if} \quad |i-j|>1; \\
	\ne 0 & \text{if} \quad |i-j|=1
	\end{cases}
	\qquad \qquad (0 \leq i, j \leq d)$.
\end{itemize}
We refer to $d$ as the \emph{diameter} of $\Phi$.
We say that $\Phi$ is \emph{over} $\mathbb{F}$.
\end{definition}

\noindent
The notion of isomorphism for Leonard systems was introduced in \cite[Definition 1.5]{2001TerLAA}.
Leonard systems are classified up to isomorphism \cite{2001TerLAA}.
This classification amounts to a linear algebraic version of Leonard's theorem \cite{1982Leonard}.

\begin{lemma}[cf. {\cite[Theorem 1.12]{2001TerLAA}}] \label{lem:LP, TDrels}
Let $A, A^*$ denote a Leonard pair on $V$.
Then there exists a sequence of scalars $\beta, \gamma, \gamma^*, \varrho, \varrho^*$ taken from $\mathbb{F}$ such that both 
\begin{align}
	0 &= [A, A^2A^* - \beta AA^*A + A^*A^2 - \gamma(AA^*+A^*A)-\varrho A^*], \label{TD1}\\
	0 &= [A^*, A^{*2}A - \beta A^*AA^* + AA^{*2} - \gamma^*(A^*A+AA^*)-\varrho^* A],\label{TD2}
\end{align}
where $[r,s]$ means $rs-sr$.
\end{lemma}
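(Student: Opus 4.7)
The plan is to pass to a basis in which $A$ is represented by a diagonal matrix and $A^*$ by an irreducible tridiagonal matrix. Such a basis exists by Definition \ref{Def:LP}(ii). Let $\theta_0,\theta_1,\ldots,\theta_d$ denote the resulting diagonal entries of $A$; by a standard argument these are mutually distinct, since otherwise the eigenspaces of $A$ could not be cyclically generated by $A^*$. Fix scalars $\beta,\gamma,\varrho$ and let $X$ denote the bracketed expression in \eqref{TD1}. A direct entry-wise calculation shows that the $(i,j)$ entry of $X$ in this basis equals $A^*_{ij}\cdot p(\theta_i,\theta_j)$, where
\[
p(x,y)=x^2-\beta xy+y^2-\gamma(x+y)-\varrho
\]
is symmetric in $x$ and $y$. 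Because $A$ has distinct diagonal entries, $[A,X]=0$ if and only if $X$ is diagonal; and using that the sub- and superdiagonal entries of $A^*$ are nonzero while the remaining off-diagonal entries vanish, this is in turn equivalent to $p(\theta_i,\theta_{i+1})=0$ for $0\le i\le d-1$.

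The heart of the proof is to choose $\beta,\gamma,\varrho$ so that $p(\theta_i,\theta_{i+1})=0$ for all $i$. The key structural input is the three-term recurrence satisfied by the eigenvalue sequence of a Leonard pair: there exist scalars $\beta,\gamma\in\mathbb{F}$ with $\theta_{i-1}-\beta\theta_i+\theta_{i+1}=\gamma$ for $1\le i\le d-1$. Given this recurrence, the telescoping identity
\[
p(\theta_i,\theta_{i+1})-p(\theta_{i-1},\theta_i)=(\theta_{i+1}-\theta_{i-1})\bigl(\theta_{i-1}-\beta\theta_i+\theta_{i+1}-\gamma\bigr)=0
\]
shows that $p(\theta_i,\theta_{i+1})$ is independent of $i$; setting $\varrho$ equal to this common value then yields \eqref{TD1}. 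To obtain \eqref{TD2}, apply the same argument to the Leonard pair $A^*,A$, which produces the dual recurrence $\theta^*_{i-1}-\beta\theta^*_i+\theta^*_{i+1}=\gamma^*$ together with an appropriate $\varrho^*$.

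The main obstacle is establishing the three-term recurrence on the eigenvalues, and in particular showing the \emph{same} coefficient $\beta$ works simultaneously for the sequences $\{\theta_i\}$ and $\{\theta^*_i\}$. My approach would be to exploit the split decomposition of a Leonard pair: there is a basis of $V$ in which $A$ is lower bidiagonal with diagonal entries $\theta_0,\theta_1,\ldots,\theta_d$ and $A^*$ is upper bidiagonal with diagonal entries $\theta^*_d,\theta^*_{d-1},\ldots,\theta^*_0$. Comparing how the entries of $A^*$ in the eigenbasis of $A$ are related to those of $A$ in the eigenbasis of $A^*$ by the change-of-basis to and from the split basis forces a single parameter $\beta$ to govern both recurrences. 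This is the linear-algebraic heart of Leonard's theorem, and once it is in hand the rest of the proof reduces to the entry-wise bookkeeping sketched above.
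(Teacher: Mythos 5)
Your reduction is sound as far as it goes: in a basis where $A$ is diagonal with distinct entries $\theta_0,\dots,\theta_d$ and $A^*$ is irreducible tridiagonal (this is Definition \ref{Def:LP}(i), not (ii)), the bracketed expression $X$ in \eqref{TD1} has $(i,j)$ entry $A^*_{ij}\,p(\theta_i,\theta_j)$, the condition $[A,X]=0$ reduces to the sub/superdiagonal conditions $p(\theta_i,\theta_{i+1})=0$, and your telescoping identity correctly shows that a three-term recurrence $\theta_{i-1}-\beta\theta_i+\theta_{i+1}=\gamma$ for $1\le i\le d-1$ allows a choice of $\varrho$ killing all of these; the same computation with $A$ and $A^*$ interchanged handles \eqref{TD2}. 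This is essentially the computation underlying Lemma \ref{lem:TD-beta rec}.

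The genuine gap is that the input you need is never established: the existence of a \emph{single} $\beta$ such that both $\{\theta_i\}_{i=0}^d$ and $\{\theta^*_i\}_{i=0}^d$ satisfy such recurrences (i.e., both sequences are $\beta$-recurrent with a common $\beta$). Your closing paragraph only asserts that "comparing" the change-of-basis data to and from the split basis "forces a single parameter $\beta$"; no identity is exhibited and no argument is indicated for why the two recurrence parameters must coincide. That common-$\beta$ statement is precisely the nontrivial content of the cited theorem: in \cite{2001TerLAA} it is extracted from the parameter-array/classification machinery, where the conditions amount to saying that $(\theta_{i-2}-\theta_{i+1})/(\theta_{i-1}-\theta_i)$ and its starred analogue are equal to one another and independent of $i$, the common value being $\beta+1$. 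Note also that the present paper gives no proof of this lemma at all --- it simply quotes \cite[Theorem 1.12]{2001TerLAA} --- so there is no in-paper argument to compare against; as a standalone proof, your proposal reduces the theorem to its hardest step (and correctly identifies it as such) but then leaves that step unproved.
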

\noindent
The relations \eqref{TD1}, \eqref{TD2} are called the \emph{tridiagonal relations}.

\medskip
We now recall the notion of a Hessenberg pair.
This notion generalizes the notion of a Leonard pair.
A matrix in $\MatF$ is called \emph{Hessenberg} whenever each entry below the subdiagonal is zero and each entry on the subdiagonal is nonzero. 
The following matrices are Hessenberg:
\begin{equation*}
	\begin{pmatrix}
	2 & 1 & 4 & 9 \\ 
	3 & 5 & 7 & 8 \\
	0 & 1 & 3 & 6 \\
	0 & 0 & 9 & 2
	\end{pmatrix},
	\qquad
	\begin{pmatrix}
	2 & 1 & 0 & 0 \\ 
	3 & 5 & 7 & 0 \\
	0 & 1 & 0 & 6 \\
	0 & 0 & 9 & 2
	\end{pmatrix}, 
	\qquad
	\begin{pmatrix}
	0 & 0 & 0 & 0 \\ 
	3 & 0 & 0 & 0 \\
	0 & 1 & 0 & 0 \\
	0 & 0 & 9 & 0
	\end{pmatrix}.
\end{equation*}

\begin{definition}[cf. {\cite[Definition 1.1]{2010GodjLAA}}]\label{Def:HP}
By a \emph{Hessenberg pair on $V$} we mean an ordered pair $A, A^*$ of elements in $\EndV$ such that:
\begin{itemize}
	\item[(i)] There exists a basis for $V$ with respect to which the matrix representing $A$ is diagonal and the matrix representing $A^*$ is Hessenberg.
	\item[(ii)] There exists a basis for $V$ with respect to which the matrix representing $A^*$ is diagonal and the matrix representing $A$ is Hessenberg.
\end{itemize} 
\end{definition}

\begin{note}\label{note:THpair}
Our concept of a Hessenberg pair is slightly different from the one in \cite{2009GodjLAA, 2010GodjLAA}.
What we call a Hessenberg pair is called a \emph{thin} Hessenberg pair in \cite{2010GodjLAA}.
\end{note}

\noindent
For a Hessenberg pair $A, A^*$ on $V$, each of $A, A^*$ is multiplicity-free \cite[Lemma 2.1]{2010GodjLAA}.
We now recall a Hessenberg system.

\begin{definition}[cf. {\cite[Definition 2.2]{2010GodjLAA}}]\label{Def:HS}
By a \emph{Hessenberg system} on  $V$ we mean a sequence
\begin{equation*}\label{ALS}
	\Phi=(A; \{E_i\}^d_{i=0}; A^*; \{E^*_i\}^d_{i=0})
\end{equation*}
of elements in $\EndV$ that satisfy the conditions (i)--(v) below.
\begin{itemize}
	\item[(i)] Each of $A$, $A^*$ is multiplicity-free;
	\item[(ii)] $\{E_i\}^d_{i=0}$ is an ordering of the primitive idempotents of $A$;
	\item[(iii)] $\{E^*_i\}^d_{i=0}$ is an ordering of the primitive idempotents of $A^*$;
	\item[(iv)] $E_iA^*E_j = \begin{cases}
	0 & \text{if} \quad i-j>1; \\
	\ne 0 & \text{if} \quad i-j=1
	\end{cases}
	\qquad \qquad (0 \leq i, j \leq d)$;
	\item[(v)] $E^*_iAE^*_j = \begin{cases}
	0 & \text{if} \quad i-j>1; \\
	\ne 0 & \text{if} \quad i-j=1
	\end{cases}
	\qquad \qquad (0 \leq i, j \leq d)$.
\end{itemize}
We refer to $d$ as the \emph{diameter} of $\Phi$.
We say that $\Phi$ is \emph{over} $\mathbb{F}$.
\end{definition}

\begin{note}
What we call a Hessenberg system is called a \emph{thin} Hessenberg system in \cite{2010GodjLAA}.
\end{note}

\noindent
We remark that Hessenberg pairs do not satisfy the tridiagonal relations \eqref{TD1}, \eqref{TD2} in general.

\medskip
We now define a circular Hessenberg pair.
This is a special case of a Hessenberg pair.
Assume that $M\in\MatF$ is Hessenberg.
Then $M$ is called \emph{circular} whenever the $(0,d)$-entry of $M$ is nonzero and every other entry above the superdiagonal is zero.
The following matrices are circular Hessenberg:
\begin{equation*}
	\begin{pmatrix}
	2 & 1 & 0 & 9 \\ 
	3 & 5 & 7 & 0 \\
	0 & 1 & 3 & 6 \\
	0 & 0 & 9 & 2
	\end{pmatrix},
	\qquad
	\begin{pmatrix}
	2 & 0 & 0 & 9 \\ 
	3 & 5 & 7 & 0 \\
	0 & 1 & 0 & 6 \\
	0 & 0 & 9 & 2
	\end{pmatrix}, 
	\qquad
	\begin{pmatrix}
	0 & 0 & 0 & 9 \\ 
	3 & 0 & 0 & 0 \\
	0 & 1 & 0 & 0 \\
	0 & 0 & 9 & 0
	\end{pmatrix}.
\end{equation*}

\begin{definition}\label{Def:CHP}
By a \emph{circular Hessenberg pair} (or \emph{CH pair}) on $V$ we mean an ordered pair $A$, $A^*$ of elements in $\EndV$ such that:
\begin{itemize}
	\item[(i)] There exists a basis for $V$ with respect to which the matrix representing $A$ is diagonal and the matrix representing $A^*$ is circular Hessenberg.
	\item[(ii)] There exists a basis for $V$ with respect to which the matrix representing $A^*$ is diagonal and the matrix representing $A$ is circular Hessenberg.
\end{itemize}
\end{definition}

\begin{lemma}\label{lem:CHP->HP}
Let $A, A^*$ denote a CH pair on $V$.
Then the pair $A, A^*$ is Hessenberg. 
Moreover, each of $A, A^*$ is multiplicity-free.
\end{lemma}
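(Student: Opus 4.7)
The plan is to simply unpack the definitions and invoke a previously cited fact. Comparing the definition of a Hessenberg matrix (entries below the subdiagonal are zero, entries on the subdiagonal are nonzero) with the definition of a circular Hessenberg matrix (the same conditions, plus the $(0,d)$-entry is nonzero and all other entries above the superdiagonal vanish), one sees at once that every circular Hessenberg matrix is Hessenberg. The extra clauses in the circular version only impose further restrictions on the pattern above the superdiagonal and do not weaken the Hessenberg conditions in any way.

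Applying this observation, I would argue as follows. Let $A,A^*$ be a CH pair on $V$, and fix a basis of $V$ as in Definition \ref{Def:CHP}(i): with respect to it, $A$ is represented by a diagonal matrix and $A^*$ by a circular Hessenberg matrix. By the previous paragraph, the latter matrix is Hessenberg, so this basis already witnesses condition (i) of Definition \ref{Def:HP}. Exactly the same reasoning, using a basis as in Definition \ref{Def:CHP}(ii), witnesses condition (ii) of Definition \ref{Def:HP}. Hence $A,A^*$ is a Hessenberg pair on $V$.

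For the remaining assertion, once $A,A^*$ has been recognized as a Hessenberg pair, I would simply invoke the fact recorded in the paragraph following Definition \ref{Def:HP}, namely that for any Hessenberg pair $A,A^*$ on $V$ each of $A,A^*$ is multiplicity-free (\cite[Lemma 2.1]{2010GodjLAA}). Since we have just shown that our CH pair is a Hessenberg pair, the multiplicity-free conclusion follows.

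There is essentially no obstacle here: the lemma is a definitional check combined with a citation. The only thing one must take care of is to verify that the circular condition does not conflict with the Hessenberg condition on the subdiagonal, which it clearly does not, and to observe that the \emph{same} bases that demonstrate the CH pair structure also demonstrate the Hessenberg pair structure, so no new bases need to be produced.
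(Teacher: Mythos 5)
Your argument is correct and follows exactly the same route as the paper: the first assertion is a definitional check (a circular Hessenberg matrix is by definition a Hessenberg matrix satisfying extra constraints, so the bases from Definition \ref{Def:CHP} witness Definition \ref{Def:HP}), and the second assertion is the citation of \cite[Lemma 2.1]{2010GodjLAA} recorded below Note \ref{note:THpair}. The paper's proof is simply a terser statement of the same reasoning.
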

\begin{proof}
The first assertion directly follows from the definition of a circular Hessenberg matrix.
The second assertion follows from the first assertion and the comment below Note \ref{note:THpair}.
\end{proof}

\noindent
We now introduce a \emph{circular Hessenberg system}.

\begin{definition} \label{Def:CHS}
By a \emph{circular Hessenberg system} (or \emph{CH system}) on  $V$ we mean a sequence
\begin{equation}\label{CHS}
	\Phi=(A; \{E_i\}^d_{i=0}; A^*; \{E^*_i\}^d_{i=0})
\end{equation}
of elements in $\EndV$ that satisfy the conditions (i)--(v) below.
\begin{itemize}
	\item[(i)] Each of $A$, $A^*$ is multiplicity-free;
	\item[(ii)] $\{E_i\}^d_{i=0}$ is an ordering of the primitive idempotents of $A$;
	\item[(iii)] $\{E^*_i\}^d_{i=0}$ is an ordering of the primitive idempotents of $A^*$;
	\item[(iv)] $E_iA^*E_j = \begin{cases}
	0 & \text{if} \quad 1<i-j \ \text{ or } \ 1<j-i<d; \\
	\ne 0 & \text{if} \quad 1=i-j \ \text{ or } \ j-i=d
	\end{cases}
	\qquad \qquad (0 \leq i, j \leq d)$;
	\item[(v)] $E^*_iAE^*_j = \begin{cases}
	0 & \text{if} \quad 1<i-j \ \text{ or } \ 1<j-i<d; \\
	\ne 0 & \text{if} \quad 1=i-j \ \text{ or } \ j-i=d
	\end{cases}
	\qquad \qquad (0 \leq i, j \leq d)$.
\end{itemize}
We refer to $d$ as the \emph{diameter} of $\Phi$. 
We say that $\Phi$ is \emph{over} $\mathbb{F}$.
\end{definition}

CH pairs and CH systems are related as follows.
Let $(A; \{E_i\}^d_{i=0}; A^*; \{E^*_i\}^d_{i=0})$ denote a CH system on $V$.
Then $A$, $A^*$ is a CH pair on $V$.
Conversely, let $A, A^*$ denote a CH pair on $V$. 
Then each of $A$, $A^*$ is multiplicity-free by Lemma \ref{lem:CHP->HP}.
Moreover, there exists an ordering $\{E_i\}^d_{i=0}$ of the primitive idempotents of $A$, and there exists an ordering $\{E^*_i\}^d_{i=0}$ of the primitive idempotents of $A^*$, such that $(A; \{E_i\}^d_{i=0}; A^*; \{E^*_i\}^d_{i=0})$ is a CH system on $V$.

\begin{note}\label{CHPvsCTP}
Let $A, A^*$ denote a CH pair on $V$.
By Lemma \ref{lem:CHP->HP}, $A, A^*$ is a Hessenberg pair on $V$.
We next explain why $A, A^*$ is a cyclic tridiagonal pair in the sense of \cite[Definition 1.1]{2017BGV}.
By the comment above the note, there is a CH system $\Phi=(A; \{E_i\}^d_{i=0}; A^*; \{E^*_i\}^d_{i=0})$ on $V$.
Define $N=d+1$ and consider the cyclic group $\mathbb{Z}_N=\mathbb{Z}/N\mathbb{Z}$.
The group elements are denoted by $0,1,\ldots, d$.
By Definition \ref{Def:CHS}(iv),(v) we have 
\begin{align*}
	A^*E_iV & \subseteq E_{i-1}V+E_iV+E_{i+1}V, \\
	AE^*_iV & \subseteq E^*_{i-1}V+E^*_iV+E^*_{i+1}V
\end{align*}
for $i\in \mathbb{Z}_N$.
Let $W$ denote a nonzero subspace of $V$ such that $AW\subseteq W$ and $A^*W\subseteq W$. 
We show that $W=V$.
Let $i \in \mathbb{Z}_N$.
By \eqref{prim idemp} and since $AW\subseteq W$, we have $E_iW\subseteq W$.
Moreover, $E_iW\ne 0$ if and only if $E_iW=E_iV$.
A similar comment applies to $E^*_i$.
Since $W\ne 0$, there exists $j\in \mathbb{Z}_N$ such that $E_jW\ne 0$. 
Therefore,  $E_jV\subseteq  W$.
By Definition \ref{Def:CHS}(iv) we have $E_{i+1}A^*E_i \ne 0$ for $i \in \mathbb{Z}_N$.
Taking $i=j$ we find that $E_{j+1} V \subseteq W$.
Repeating this argument, we have $E_r V \subseteq W$ for all $r \in\mathbb{Z}_N$.
Therefore, $W=V$.
By these comments, $A, A^*$ is a cyclic tridiagonal pair on $V$.
\end{note}

We will be discussing the notion of isomorphism for CH pairs and CH systems (cf. \cite[Definition 2.5]{2009GodjLAA}).
We now clarify what it means. 
Let $A, A^*$ denote a CH pair on $V$ and let $B, B^*$ denote a CH pair on $V'$.
By an \emph{isomorphism of CH pairs} from $A, A^*$ to $B, B^*$, we mean a vector space isomorphism $\sigma: V \to V'$ such that $\sigma A= B\sigma$ and $\sigma A^* = B^*\sigma$.
We say the CH pairs $A, A^*$ and $B, B^*$ are \emph{isomorphic} whenever there exists an isomorphism of CH pairs from $A, A^*$ and $B, B^*$.
Let $\sigma:V\to V'$ be an isomorphism of vector spaces.
For $X\in \EndV$ abbreviate $X^\sigma=\sigma X \sigma^{-1}$.
Observe that the map $\EndV \to \operatorname{End}(V')$, $X \mapsto X^\sigma$ is an isomorphism of algebras.
For a CH system $\Phi=(A; \{E_i\}^d_{i=0}; A^*; \{E^*_i\}^d_{i=0})$ on $V$, write $\Phi^\sigma:=(A^\sigma; \{E^\sigma_i\}^d_{i=0}; A^{*\sigma}; \{E^{*\sigma}_i\}^d_{i=0})$.
Observe that $\Phi^\sigma$ is a CH system on $V'$.
Let $\Phi'$ denote a CH system on $V'$.
By an \emph{isomorphism of CH systems} from $\Phi$ to $\Phi'$, we mean a vector space isomorphism $\sigma: V\to V'$ such that $\Phi^\sigma=\Phi'$. 
We say that the CH systems $\Phi$ and $\Phi'$ are \emph{isomorphic} whenever there exists an isomorphism of CH systems from $\Phi$ to $\Phi'$.

\medskip
Regarding Lemma \ref{lem:LP, TDrels}, it is natural to ask if a CH pair satisfies the tridiagonal relations \eqref{TD1}, \eqref{TD2}.

\begin{conjecture}\label{conj1}
Let $A, A^*$ denote a CH pair on $V$.
Then there exists a sequence of scalars $\beta, \gamma, \gamma^*, \varrho, \varrho^*$ taken from $\mathbb{F}$ such that both \eqref{TD1} and \eqref{TD2} hold.
\end{conjecture}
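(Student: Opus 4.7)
My plan is to translate \eqref{TD1} and \eqref{TD2} into explicit polynomial conditions on the eigenvalues of $A$ and $A^*$ together with the off-diagonal entries of the corresponding matrices, and then to exhibit the desired scalars by combining both parts of Definition \ref{Def:CHP} with the six-basis structure announced for Section \ref{sec:SB}.

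Fix a basis $\{v_j\}_{j=0}^{d}$ of $V$ with respect to which $A$ is diagonal with eigenvalues $\theta_0,\ldots,\theta_d$ and $A^*$ is circular Hessenberg with entries $a_{ij}$. Since $A$ is diagonal in this basis, a routine computation gives
\begin{equation*}
\bigl(A^2A^*-\beta AA^*A+A^*A^2-\gamma(AA^*+A^*A)-\varrho A^*\bigr)_{ij}=q(\theta_i,\theta_j)\,a_{ij},
\end{equation*}
where $q(x,y)=x^2-\beta xy+y^2-\gamma(x+y)-\varrho$. Because $[A,X]_{ij}=(\theta_i-\theta_j)X_{ij}$ and the $\theta_i$ are mutually distinct, \eqref{TD1} is equivalent to $q(\theta_i,\theta_j)\,a_{ij}=0$ for all $i\ne j$. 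The circular Hessenberg shape of $A^*$ reduces this to the subdiagonal conditions $q(\theta_{i+1},\theta_i)=0$ for $0\le i\le d-1$, the corner condition $q(\theta_0,\theta_d)=0$, and, whenever $a_{i,i+1}\ne 0$, the superdiagonal conditions $q(\theta_i,\theta_{i+1})=0$ — the last of which are automatic by symmetry of $q$ once the subdiagonal ones hold. Subtracting consecutive subdiagonal equations and using $\theta_{i-1}\ne\theta_{i+1}$ yields the standard three-term recurrence $\theta_{i+1}+\theta_{i-1}=\beta\theta_i+\gamma$ for $1\le i\le d-1$, exactly as in the Leonard case. Thus \eqref{TD1} is equivalent to: (a) $\{\theta_i\}_{i=0}^{d}$ satisfies a $\beta$-recurrence with common second difference $\gamma$, and (b) the cyclic closure $q(\theta_0,\theta_d)=0$ holds for some $\varrho$. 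The symmetric analysis in an $A^*$-eigenbasis reduces \eqref{TD2} to the analogous conditions on the dual eigenvalue sequence $\{\theta^*_i\}_{i=0}^{d}$.

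The classical Leonard-pair argument already delivers the three-term recurrence in each of the two families, so the main remaining obstacle is the cyclic closure: one must show that the scalars $\beta,\gamma$ determined by the eigenvalue recurrence are compatible with the additional equation coming from the $(0,d)$ entries, and similarly for $\beta,\gamma^*$ and the dual corner. My proposed route is to pass to the split bases of Section \ref{sec:SB}, read off the eigenvalue sequence, dual eigenvalue sequence, and split sequence from Section \ref{Sec:Phi-split seq}, and then argue that the transition matrices between the split bases force the corner entries to take the form prescribed by the three-term recurrences — in effect, showing that every CH pair is recurrent in the sense of Definition \ref{def recurrent CHP}. By Proposition \ref{rec <=> TD rels} this would imply Conjecture \ref{conj1}. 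The principal risk is that, unlike in the tridiagonal setting, the circular Hessenberg shape does not manifestly encode the cyclic closure; it is conceivable that non-recurrent CH pairs exist over small fields, in which case the conjecture would require a qualification on $|\mathbb{F}|$ or on the pair itself.
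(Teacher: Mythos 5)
There is no proof in the paper to compare against: the statement you are proving is Conjecture \ref{conj1}, which the paper explicitly leaves open (and shows, via Lemma \ref{lem:TD-beta rec} and Proposition \ref{rec <=> TD rels}, to be equivalent to Conjecture \ref{conj2}, that every CH system is recurrent). The paper only proves the conjecture's content for the subclass of \emph{recurrent} CH pairs; for general CH pairs it offers no argument, and neither does your proposal.

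Your reduction is fine as far as it goes: in the $\Phi$-standard basis, where $A=\mathrm{diag}(\theta_0,\ldots,\theta_d)$ and $A^*$ is circular Hessenberg, the identity $[A,X]_{ij}=(\theta_i-\theta_j)X_{ij}$ together with the distinctness of the $\theta_i$ does show that \eqref{TD1}, for fixed $\beta,\gamma,\varrho$, is equivalent to $q(\theta_i,\theta_j)=0$ at every off-diagonal position where the representing matrix of $A^*$ is nonzero, hence to the subdiagonal chain plus the corner condition $q(\theta_0,\theta_d)=0$ (the superdiagonal conditions being automatic by the symmetry of $q$), and dually for \eqref{TD2}. But this is only a reformulation. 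The genuine gap is the step you defer to "the transition matrices between the split bases": you never show that an arbitrary CH pair satisfies the cyclic closure conditions, nor that a single $\beta$ can be chosen to serve both the $\{\theta_i\}$ and the $\{\theta^*_i\}$ recurrences simultaneously (the conjecture requires the same $\beta$ in \eqref{TD1} and \eqref{TD2}). Establishing precisely this — that every CH pair is recurrent in the sense of Definition \ref{def recurrent CHP}, after which Proposition \ref{rec <=> TD rels} finishes the job — is the entire substance of the conjecture, and your final paragraph concedes it might fail. So the proposal restates the open problem rather than resolving it; as a proof of the statement it is incomplete.
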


Shortly we will give another version of the above conjecture.
We have some comments about CH pairs and CH systems.
Let $\Phi=(A; \{E_i\}^d_{i=0}; A^*; \{E^*_i\}^d_{i=0})$ denote a CH system on $V$.
For $0\leq i \leq d$, let $\theta_i$ (resp. $\theta^*_i$) denote the eigenvalue of $A$ (resp. $A^*$) corresponding to $E_i$ (resp. $E^*_i$).
We refer to $\{\theta_i\}^d_{i=0}$ (resp. $\{\theta^*_i\}^d_{i=0}$) as the \emph{eigenvalue sequence} (resp. \emph{dual eigenvalue sequence}) of $\Phi$.
By \cite[Proposition 5.9]{2010GodjLAA} there exists a unique sequence $\{\phi_i\}^d_{i=1}$ of nonzero scalars in $\mathbb{F}$ with the following property: there exists a basis for $V$ with respect to which the matrices representing $A$ and $A^*$ are
\begin{equation}\label{CHP split mat}
	A: \quad \begin{pmatrix}
	\theta_d & & & & & \mathbf{0} \\
	1 & \theta_{d-1} \\
	 & 1 & \theta_{d-2} & \\
	 & & \cdot & \cdot \\
	 & & & \cdot & \cdot \\ 
	\mathbf{0} & & & & 1 & \theta_0
	\end{pmatrix},
	\qquad \qquad 
	A^*: \quad \begin{pmatrix}
	\theta^*_0 & \phi_1 & & & & \mathbf{0}\\
	 & \theta^*_1 & \phi_2 & \\
	 & & \theta^*_2 & \cdot \\
	 & & & \cdot & \cdot &\\
	 & & & & \cdot & \phi_d\\
	\mathbf{0} & & & & & \theta^*_d
	\end{pmatrix}.
\end{equation}
The sequence $\{\phi_i\}^d_{i=1}$ is called the \emph{$\Phi$-split sequence}. 
The above basis is called a \emph{$\Phi$-split basis}.
In Section \ref{Sec:Phi-split seq}, we shall discuss the $\Phi$-split sequence and the $\Phi$-split basis in detail.
For the rest of this paper we use the following notation.

\begin{notation}\label{notation:scalars}
Let $\{\theta_i\}^d_{i=0}$, $\{\theta^*_i\}^d_{i=0}$, $\{\phi_i\}^d_{i=1}$ denote scalars in $\mathbb{F}$ such that
\begin{itemize}
	\item[(i)] $\theta_i\ne \theta_j$, $\theta^*_i\ne \theta^*_j$ \quad if \quad $i \ne j$ \quad $(0 \leq i, j \leq d)$,
	\item[(ii)] $\phi_i \ne 0$ \quad  ($1\leq i \leq d$).
\end{itemize} 
\end{notation}

\begin{definition}\label{def:A A*}
Referring to Notation \ref{notation:scalars}, define the matrices $A, A^*\in \MatF$ by
\begin{equation*}\label{eq:matrices form}
	A =  \begin{pmatrix}
	\theta_d & & & & & \mathbf{0} \\
	1 & \theta_{d-1} \\
	 & 1 & \theta_{d-2} & \\
	 & & \cdot & \cdot \\
	 & & & \cdot & \cdot \\ 
	\mathbf{0} & & & & 1 & \theta_0
	\end{pmatrix},
	\qquad \qquad 
	A^* =  \begin{pmatrix}
	\theta^*_0 & \phi_1 & & & & \mathbf{0}\\
	 & \theta^*_1 & \phi_2 & \\
	 & & \theta^*_2 & \cdot \\
	 & & & \cdot & \cdot &\\
	 & & & & \cdot & \phi_d\\
	\mathbf{0} & & & & & \theta^*_d
	\end{pmatrix}.
\end{equation*}
Observe that $A$, $A^*$ are multiplicity-free. 
For $0\leq i \leq d$, let $E_i$ (resp. $E^*_i$) denote the primitive idempotent of $A$ (resp. $A^*$) with respect to $\theta_i$ (resp. $\theta^*_i$).
\end{definition}

Recall the scalars $\{\theta_i\}^d_{i=0}$, $\{\theta^*_i\}^d_{i=0}$, $\{\phi_i\}^d_{i=1}$ from Notation \ref{notation:scalars}.
In \cite[Section 11]{2001TerLAA}, the following scalars $\{\vartheta_i\}^{d+1}_{i=0}$ are introduced:
\begin{equation}\label{vartheta}
\begin{split}
	&\vartheta_i = \phi_i-(\theta^*_i-\theta^*_0)(\theta_{d-i+1}-\theta_0)
	\qquad \qquad (1\leq i \leq d), \\
	&\vartheta_0  =0, \qquad \quad \vartheta_{d+1}=0.
\end{split}
\end{equation}

\noindent
Next we discuss how the scalars $\{\vartheta_i\}^{d+1}_{i=0}$ are related to the tridiagonal relations \eqref{TD1}, \eqref{TD2}.
In what follows, we will use the concept of a $\beta$-recurrent sequence; see Definition \ref{def:beta-rec}.

\begin{lemma}[cf. {\cite[Theorem 12.5]{2001TerLAA}}]\label{lem:TD-beta rec}
Let  the scalars $\{\theta_i\}^d_{i=0}$, $\{\theta^*_i\}^d_{i=0}$, $\{\phi_i\}^d_{i=1}$ be as in Notation {\rm\ref{notation:scalars}}. 
Let the matrices $A$, $A^*$ be as in Definition {\rm\ref{def:A A*}}.
Let $\beta$ denote any scalar in $\mathbb{F}$. 
Then there exist scalars $\gamma, \gamma^*, \varrho, \varrho^*$ in $\mathbb{F}$ such that
\begin{align*}
	0 &= [A, A^2A^* - \beta AA^*A + A^*A^2 - \gamma(AA^*+A^*A)-\varrho A^*],\\
	0 &= [A^*, A^{*2}A - \beta A^*AA^* + AA^{*2} - \gamma^*(A^*A+AA^*)-\varrho^* A]
\end{align*}
if and only if {\rm(i)--(iii)} hold below.
\begin{enumerate}[\normalfont(i)]
	\item The sequence $\{\theta_i\}^d_{i=0}$ is $\beta$-recurrent.
	\item The sequence $\{\theta^*_i\}^d_{i=0}$ is $\beta$-recurrent.
	\item The sequence $\{\vartheta_i\}^{d+1}_{i=0}$ from \eqref{vartheta} is $\beta$-recurrent.
\end{enumerate}
\end{lemma}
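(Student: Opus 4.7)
The plan is to reduce this lemma directly to \cite[Theorem 12.5]{2001TerLAA}. The matrices $A, A^*$ in Definition \ref{def:A A*} are exactly the split form of a Leonard pair studied in \cite[Section 11]{2001TerLAA}: $A$ is lower bidiagonal with eigenvalues $\theta_d, \theta_{d-1}, \ldots, \theta_0$ down the diagonal and ones on the subdiagonal, while $A^*$ is upper bidiagonal with eigenvalues $\theta^*_0, \ldots, \theta^*_d$ down the diagonal and nonzero $\phi_i$ on the superdiagonal. Since Terwilliger's argument for Theorem 12.5 is an entry-wise matrix calculation that depends only on these bidiagonal forms together with the distinctness and non-vanishing hypotheses already encoded in Notation \ref{notation:scalars}, the argument carries over verbatim. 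The first step is simply to record this correspondence of notation.

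For a self-contained argument I would work directly in the given basis. For the direction $(\Leftarrow)$, assuming (i)--(iii), I define
$$\gamma = \theta_{i-1} - \beta\, \theta_i + \theta_{i+1}, \qquad \varrho = \theta_{i-1}^2 - \beta\, \theta_{i-1} \theta_i + \theta_i^2 - \gamma(\theta_{i-1} + \theta_i),$$
which are independent of $i$ by the $\beta$-recurrence of $\{\theta_i\}$, and define $\gamma^*, \varrho^*$ by the dual formulas using $\{\theta^*_i\}$. I then check that
$$\bigl[A,\; A^2 A^* - \beta A A^* A + A^* A^2 - \gamma(A A^* + A^* A) - \varrho A^*\bigr] = 0$$
by examining each entry of the resulting matrix. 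Because $A$ is lower bidiagonal and $A^*$ is upper bidiagonal, the polynomial $P$ inside the commutator has nonzero entries only on a few adjacent diagonals, and the commutator's support is correspondingly narrow. The entries on the far sub-diagonal vanish by the $\beta$-recurrence of $\{\theta_i\}$, the far super-diagonal entries vanish trivially, and the remaining entries vanish precisely because of the $\beta$-recurrence of $\{\vartheta_i\}$ combined with the definition \eqref{vartheta}. The second relation is handled symmetrically, swapping the roles of $A$ and $A^*$.

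For $(\Rightarrow)$, I reverse the reading. The far sub-diagonal entries of the first commutator force $\theta_{i-1} - \beta\theta_i + \theta_{i+1}$ to equal the constant $\gamma$, yielding (i); the second commutator analogously gives (ii); and the remaining entries of the first commutator, combined with (i), force the $\beta$-recurrence of the combination $\phi_i - (\theta^*_i - \theta^*_0)(\theta_{d-i+1} - \theta_0) = \vartheta_i$, yielding (iii). The main obstacle is the bookkeeping at this last stage: one must identify, in each relevant entry of the commutator, the exact combination of $\phi_j$'s, $\theta_k$'s and $\theta^*_\ell$'s produced by the matrix multiplication, and recognize it as the telescoping expression built into \eqref{vartheta}. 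The shift $d-i+1$ rather than $i$ in \eqref{vartheta} originates from the fact that the diagonal of $A$ runs in decreasing order from $\theta_d$ at the top to $\theta_0$ at the bottom, so particular care is needed to avoid off-by-one errors when aligning indices.
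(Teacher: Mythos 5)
Your proposal is correct and follows essentially the same route as the paper: the paper offers no argument beyond the citation, treating the lemma as an instance of \cite[Theorem~12.5]{2001TerLAA} applied to the split-form matrices of Definition~\ref{def:A A*}, which is exactly the reduction your first paragraph makes. Your supplementary entrywise sketch (with the standard constants $\gamma=\theta_{i-1}-\beta\theta_i+\theta_{i+1}$, $\varrho=\theta_{i-1}^2-\beta\theta_{i-1}\theta_i+\theta_i^2-\gamma(\theta_{i-1}+\theta_i)$ and their duals) is just an outline of the computation underlying the cited theorem, so nothing further is needed.
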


We now state our first main theorem.

\begin{theorem}\label{mainthm}
Referring to Notation {\rm\ref{notation:scalars}} and Definition {\rm\ref{def:A A*}}, assume that there exists $\beta \in \mathbb{F}$ such that {\rm(i)} the sequence $\{\theta_i\}^d_{i=0}$ is $\beta$-recurrent; {\rm(ii)} the sequence $\{\theta^*_i\}^d_{i=0}$ is $\beta$-recurrent; {\rm(iii)} the sequence $\{\vartheta_i\}^{d+1}_{i=0}$ from \eqref{vartheta} is $\beta$-recurrent.
Further assume that $\vartheta_1\ne \vartheta_d$. 
Then the sequence 
\begin{equation}\label{seq CHS}
	\Phi=(A; \{E_i\}^d_{i=0}; A^*; \{E^*_i\}^d_{i=0})
\end{equation}
is a CH system on $\mathbb{F}^{d+1}$.
\end{theorem}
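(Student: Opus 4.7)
The plan is to verify the five conditions of Definition~\ref{Def:CHS} for~$\Phi$. Conditions~(i), (ii), (iii) are immediate from Definition~\ref{def:A A*}, so the substance lies in (iv) and (v). First I would invoke Lemma~\ref{lem:TD-beta rec}: under hypotheses (i)--(iii) of the theorem there exist $\gamma,\gamma^{*},\varrho,\varrho^{*}\in\mathbb{F}$ such that $A,A^{*}$ satisfy the tridiagonal relations~\eqref{TD1}--\eqref{TD2}.

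For condition~(iv), let $\{u_{j}\}_{j=0}^{d}$ denote the split basis (in which $A,A^{*}$ have the forms of Definition~\ref{def:A A*}) and pick an $A$-eigenbasis $\{f_{i}\}_{i=0}^{d}$, $Af_{i}=\theta_{i}f_{i}$, adapted to the descending flag $\operatorname{span}\{u_{d-i},\dots,u_{d}\}$. Since the lower-right $(i{+}1)\times(i{+}1)$ block of $A$ in the split basis is upper triangular with distinct diagonal $\theta_{i},\dots,\theta_{0}$, one can arrange $\operatorname{span}\{f_{0},\dots,f_{i}\}=\operatorname{span}\{u_{d-i},\dots,u_{d}\}$ and, in particular, $f_{0}=u_{d}$. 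Because $A^{*}u_{j}\in\operatorname{span}\{u_{j-1},u_{j}\}$ with $u_{j-1}$-coefficient $\phi_{j}\ne 0$, a direct computation yields $A^{*}f_{i}\in\operatorname{span}\{f_{0},\dots,f_{i+1}\}$ for $i<d$, with $f_{i+1}$-coefficient equal to $\phi_{d-i}$. This simultaneously gives $E_{k}A^{*}E_{i}=0$ for $k-i\ge 2$ (with $i<d$) and $E_{i+1}A^{*}E_{i}\ne 0$ for $0\le i\le d-1$.

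Let $M=(M_{ij})$ denote the matrix of $A^{*}$ in $\{f_{i}\}$. The remaining entries to handle are $M_{ij}$ with $2\le j-i\le d-1$ (which should vanish) and $M_{0d}$ (which should not). Expanding~\eqref{TD1} in the basis $\{f_{i}\}$ gives entrywise
\[
(\theta_{i}-\theta_{j})\,p(\theta_{i},\theta_{j})\,M_{ij}=0,\qquad p(x,y):=x^{2}-\beta xy+y^{2}-\gamma(x+y)-\varrho.
\]
The $\gamma,\varrho$ produced by the Lemma are precisely those forced by $p(\theta_{i},\theta_{i+1})=0$ for $0\le i\le d-1$; a standard manipulation using the $\beta$-recurrence of $\{\theta_{i}\}$ (identities of the kind collected in the appendix) then shows $p(\theta_{i},\theta_{j})\ne 0$ whenever $2\le j-i\le d-1$, so $M_{ij}=0$ in that range. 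For the corner entry, I would compute $M_{0d}$ directly: extracting the $u_{d}$-coefficient of $A^{*}f_{d}$ from the explicit expansion $f_{d}=\sum_{j}c_{dj}u_{j}$ and back-substituting the values of $M_{kd}$ ($k\ge 1$) produced by the lower $u$-coefficient equations yields a closed-form expression in $\theta_{i},\theta_{i}^{*},\phi_{i}$; applying the $\beta$-recurrent identities of the appendix collapses this expression to a nonzero scalar multiple of $\vartheta_{d}-\vartheta_{1}$, which is nonzero by hypothesis.

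Condition~(v) is handled by a mirror argument on the ascending flag $\operatorname{span}\{u_{0},\dots,u_{i}\}$, which is $A^{*}$-invariant. Construct an $A^{*}$-eigenbasis $\{f_{i}^{*}\}$ adapted to this flag, deduce the Hessenberg structure and the non-vanishing of the subdiagonal of~$A$ from $Au_{j}\in\operatorname{span}\{u_{j},u_{j+1}\}$, then eliminate the above-superdiagonal entries via~\eqref{TD2} and the $\beta$-recurrence of $\{\theta_{i}^{*}\}$; the dual corner entry again reduces to a multiple of $\vartheta_{d}-\vartheta_{1}$. The main obstacle throughout is the corner computation: translating $\vartheta_{1}\ne\vartheta_{d}$ into the non-vanishing of the cyclic entries requires unwinding the definition $\vartheta_{i}=\phi_{i}-(\theta_{i}^{*}-\theta_{0}^{*})(\theta_{d-i+1}-\theta_{0})$ and telescoping the resulting triangular sums by the appendix identities---that is where the bulk of the calculation sits.
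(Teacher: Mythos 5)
Your overall architecture is sound and, at the structural level, parallels the paper's proof: conditions (i)--(iii) of Definition \ref{Def:CHS} are automatic; the ``Hessenberg part'' of (iv), (v) (vanishing below the subdiagonal, nonvanishing on the subdiagonal) needs no recurrence hypothesis and your adapted-flag computation in the split basis establishes it correctly (the paper imports this from \cite{2020TerNote}); and the recurrence hypotheses enter only to kill the entries strictly above the superdiagonal away from the corner and to make the corner entries nonzero. Your entrywise expansion of \eqref{TD1}, giving $(\theta_i-\theta_j)\,p(\theta_i,\theta_j)\,M_{ij}=0$ with $p(\theta_i,y)=(y-\theta_{i-1})(y-\theta_{i+1})$, is a legitimate and more self-contained route to the vanishing statements, but it is not quite ``standard manipulation'': for $i=0$ the factorization involves the extended term $\theta_{-1}$ of the recurrence, so you must check $\theta_{-1}\notin\{\theta_2,\dots,\theta_{d-1}\}$ (this does follow from the distinctness in Notation \ref{notation:scalars}, case by case in $\beta$), and for consistency you also need $\theta_{-1}=\theta_d$ --- the periodic closure forced by $\vartheta_0=\vartheta_{d+1}=0$, the $\beta$-recurrence of $\{\vartheta_i\}_{i=0}^{d+1}$ and $\vartheta_1\ne\vartheta_d$ --- since otherwise \eqref{TD1} would force the corner entry $M_{0d}$ to vanish. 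None of this is addressed in your sketch, though it can be repaired.

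The genuine gap is the corner entry itself, which is the heart of the theorem. You assert that extracting the $u_d$-coefficient of $A^*f_d$, back-substituting, and ``applying the $\beta$-recurrent identities of the appendix'' collapses $M_{0d}$ to a nonzero multiple of $\vartheta_d-\vartheta_1$. That assertion is precisely the content of the wraparound identities \eqref{wraparound(1)}, \eqref{wraparound(2)}: after the vanishing step they reduce to $E_0A^*E_dE^*_0(\theta_d-\theta_1)=E_0E^*_0(\vartheta_1-\vartheta_d)$, and one still needs $E_0E^*_0\ne0$ and $E_dE^*_0\ne0$ (Lemma \ref{E0 normalizing}) to pass from this to $E_0A^*E_d\ne0$. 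The paper obtains these identities from \cite{2020TerNote}, and they amount to a substantial telescoping computation; Lemmas \ref{lem:rec seq q-form} and \ref{lem:rec quotient} do not by themselves organize it, and your proposal gives no computation, no intermediate identity, and no account of the nondegeneracy factors that must be tracked in the back-substitution. Since everything else in your argument holds for an arbitrary split form, the nonvanishing of the two cyclic entries is exactly what the theorem adds, and leaving it as an unverified ``the expression collapses'' claim means the decisive step of the proof is missing: you would need either to carry out that calculation explicitly or to prove (or cite) the wraparound identity.
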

\noindent
The proof of Theorem \ref{mainthm} appears in Section \ref{Sec:pf Thm(TD=>CHP)}.

\medskip
Motivated by Theorem \ref{mainthm}, we make a definition.

\begin{definition}\label{def beta-rec}
Let $\Phi=(A; \{E_i\}^d_{i=0}; A^*; \{E^*_i\}^d_{i=0})$ denote a CH system on $V$, with eigenvalue sequence $\{\theta_i\}^d_{i=0}$ and dual eigenvalue sequence $\{\theta^*_i\}^d_{i=0}$.
Let $\{\phi_i\}^d_{i=1}$ denote the $\Phi$-split sequence.
Then for $\beta \in \mathbb{F}$, we say that $\Phi$ is \emph{$\beta$-recurrent} whenever
\begin{enumerate}[\normalfont(i)]
	\item the sequence $\{\theta_i\}^d_{i=0}$ is $\beta$-recurrent,
	\item the sequence $\{\theta^*_i\}^d_{i=0}$ is $\beta$-recurrent,
	\item the sequence $\{\vartheta_i\}^{d+1}_{i=0}$ from \eqref{vartheta} is $\beta$-recurrent.
\end{enumerate}
\end{definition}

\begin{definition}\label{def recurrent}
Let $\Phi=(A; \{E_i\}^d_{i=0}; A^*; \{E^*_i\}^d_{i=0})$ denote a CH system on $V$.
We say that $\Phi$ is \emph{recurrent} whenever there exists $\beta \in \mathbb{F}$ such that $\Phi$ is $\beta$-recurrent.
\end{definition}

\begin{definition}\label{def recurrent CHP}
Let $A, A^*$ denote a CH pair on $V$. 
We say that $A, A^*$ is \emph{recurrent} whenever there exists a CH system $\Phi=(A; \{E_i\}^d_{i=0}; A^*; \{E^*_i\}^d_{i=0})$ on $V$ that is recurrent.
\end{definition}

\noindent
By Definitions \ref{def beta-rec} and \ref{def recurrent}, we observe that the CH system from Theorem \ref{mainthm} is recurrent.
We now give another version of Conjecture \ref{conj1}.

\begin{conjecture}\label{conj2}
Every CH system on $V$ is recurrent.
\end{conjecture}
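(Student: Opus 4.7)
The plan is to reduce Conjecture \ref{conj2} to the assertion that every CH pair satisfies the tridiagonal relations \eqref{TD1}, \eqref{TD2}, using the equivalence (mentioned in the introduction and formalized via Proposition \ref{rec <=> TD rels}) between being recurrent and satisfying the tridiagonal relations, together with Lemma \ref{lem:TD-beta rec}. Once this reduction is in place, the problem becomes one of producing a suitable scalar $\beta$ from the circular Hessenberg data.

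First, I would fix a CH system $\Phi=(A;\{E_i\}^d_{i=0};A^*;\{E^*_i\}^d_{i=0})$ with eigenvalue sequence $\{\theta_i\}^d_{i=0}$, dual eigenvalue sequence $\{\theta^*_i\}^d_{i=0}$, and $\Phi$-split sequence $\{\phi_i\}^d_{i=1}$, and then pass to the two eigenbases. In the eigenbasis of $A$, the matrix representing $A^*$ is circular Hessenberg, so its subdiagonal, diagonal, and corner $(0,d)$-entry are nonzero while every other superdiagonal entry vanishes. Using the transition matrices from the split basis \eqref{CHP split mat} to the eigenbasis (as developed in Section \ref{sec:SB}), each of these entries becomes an explicit polynomial in $\{\theta_i\}$, $\{\theta^*_i\}$, $\{\phi_i\}$, and the corner scalar $\xi$ of Section \ref{sec:scalar_xi}. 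The identities forcing all ``strictly-above-superdiagonal'' entries except the corner to vanish then translate into a large system of polynomial constraints on these scalars.

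Next I would organize these constraints into a recurrence. The expected route is an induction on $d$: for small $d$ one verifies recurrence by direct computation, and for larger $d$ one isolates the top few vanishing conditions, which should be expressible as three-term linear recursions in the index $i$. Running the same analysis in the opposite eigenbasis (in which $A$ is circular Hessenberg and $A^*$ is diagonal) yields a dual set of recursions. Matching the two sets, together with the split sequence identity \eqref{vartheta} rewriting $\phi_i$ as $\vartheta_i+(\theta^*_i-\theta^*_0)(\theta_{d-i+1}-\theta_0)$, should force the three sequences $\{\theta_i\}$, $\{\theta^*_i\}$, $\{\vartheta_i\}$ to be $\beta$-recurrent for a common $\beta$ extracted from the leading coefficient of the recursion. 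The appendix identities for $\beta$-recurrent sequences should do the bookkeeping.

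The main obstacle will be producing a \emph{single} $\beta$ that governs all three sequences at once. In the Leonard pair case this is enforced by the genuine tridiagonal (rather than circular) structure and the irreducibility at the endpoints $i=0$ and $i=d$; these endpoints give boundary conditions that pin $\beta$ down. For CH pairs the wrap-around destroys these endpoints, so the rigidity must come from somewhere else. The most plausible source is the additional cyclic symmetry noted in Note \ref{CHPvsCTP}: since the indices effectively live in $\mathbb{Z}_{d+1}$, one can rotate the labeling of the primitive idempotents (and of the split data) to produce further CH systems sharing the same $\beta$, and combining the constraints from several such rotations should overdetermine the scalars enough to force a unique $\beta$. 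Whether this cyclic overdetermination is sharp enough to cover all four families of Examples \ref{ex:beta not 2, -2}--\ref{ex:beta = 0}, and in particular the boundary cases $\beta \in \{2,-2,0\}$ where the recurrence degenerates, is the delicate point I would expect to absorb most of the work.
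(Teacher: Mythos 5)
You are attempting to prove Conjecture \ref{conj2}, which the paper deliberately leaves open: there is no proof of this statement in the paper, so your proposal has to stand entirely on its own, and as written it does not. The first half of your plan (reduce recurrence to the tridiagonal relations via Proposition \ref{rec <=> TD rels} and Lemma \ref{lem:TD-beta rec}, then express the circular Hessenberg vanishing conditions as polynomial constraints on $\{\theta_i\}$, $\{\theta^*_i\}$, $\{\phi_i\}$ using the transition matrices of Section \ref{sec:SB}) only restates the equivalence the paper already establishes between Conjecture \ref{conj1} and Conjecture \ref{conj2}; it does not advance the problem. The actual content of a proof would be the step you explicitly defer: showing that these constraints force the three sequences $\{\theta_i\}^d_{i=0}$, $\{\theta^*_i\}^d_{i=0}$, $\{\vartheta_i\}^{d+1}_{i=0}$ to be $\beta$-recurrent for a \emph{common} $\beta$. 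No recursion is actually derived, no induction base or inductive step is given, and you acknowledge that the endpoint rigidity used in the Leonard-pair argument of \cite{2001TerLAA} is destroyed by the wrap-around. A proof cannot park its main difficulty as "the delicate point I would expect to absorb most of the work."

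Moreover, the one concrete idea you offer to recover rigidity fails. Cyclically rotating the orderings $\{E_i\}^d_{i=0}$, $\{E^*_i\}^d_{i=0}$ does not in general produce another CH system, because the zero/nonzero pattern in Definition \ref{Def:CHS}(iv),(v) is not invariant under cyclic shifts of the indices. Concretely, by Proposition \ref{prop:mat rep A A* stB} the matrix representing $A^*$ in the $\Phi$-standard basis has unrestricted (generically nonzero) superdiagonal entries $b^*_0,\ldots,b^*_{d-1}$, while the entry in position $(d,0)$ is required to vanish by Definition \ref{Def:CHS}(v) (the condition $i-j>1$ with $i=d$, $j=0$). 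Conjugating by the cyclic permutation that implements a shift of the labels moves $b^*_{d-1}$ into position $(d,0)$, so the shifted sequence violates the CH-system axioms unless $b^*_{d-1}=0$, which there is no reason to expect. Hence the "further CH systems sharing the same $\beta$" that your overdetermination argument relies on generally do not exist, and with them the proposal's only proposed source of the common $\beta$ disappears. The conjecture remains open, and your sketch does not close it.
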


\noindent
We remark that Conjecture \ref{conj1} is equivalent to Conjecture \ref{conj2}.
Indeed, if Conjecture \ref{conj2} is true, then Conjecture \ref{conj1} is also true by Lemma \ref{lem:TD-beta rec}.
Conversely, suppose that Conjecture \ref{conj1} is true.
Let $\Phi=(A; \{E_i\}^d_{i=0}; A^*; \{E^*_i\}^d_{i=0})$ denote a CH system on $V$.
Then by Lemma \ref{lem:TD-beta rec}, $\Phi$ is $\beta$-recurrent, where $\beta$ is from Conjecture \ref{conj1}.
By Definition \ref{def recurrent}, $\Phi$ is recurrent.
Therefore, Conjecture \ref{conj2} is true.

%%%%%%%%%%%%%%%%%%%%%%%%%%%%%%%%%%%%%%%%
%%%%%%%%%%%%%%%%%%%%%%%%%%%%%%%%%%%%%%%%
%%%%%%%%%%%%%%%%%%%%%%%%%%%%%%%%%%%%%%%%
\section{The $\Phi$-split sequence and the $\Phi$-split basis}\label{Sec:Phi-split seq}

We continue to discuss the CH system $\Phi=(A; \{E_i\}^d_{i=0}; A^*; \{E^*_i\}^d_{i=0})$ from Definition \ref{Def:CHS}.
In the previous section, we discussed the $\Phi$-split sequence and the $\Phi$-split basis. 
In this section, we discuss these topics in more detail. 
Let $\{\theta_i\}^d_{i=0}$ (resp. $\{\theta^*_i\}^d_{i=0}$) denote the eigenvalue sequence (resp. dual eigenvalue sequence) of $\Phi$.
By a \emph{decomposition} of $V$, we mean a sequence $\{U_i\}^d_{i=0}$ of one-dimensional subspaces of $V$ such that
\begin{equation*}\label{def: decomp}
	V=\sum^d_{i=0}U_i \qquad \qquad  (\text{direct sum}).
\end{equation*}
For example, each of the sequences $\{E_iV\}^d_{i=0}$ and $\{E^*_iV\}^d_{i=0}$ is a decomposition of $V$.
We now recall the $\Phi$-split decomposition \cite[Section 4]{2010GodjLAA}.
For $0 \leq i \leq d$ define
\begin{equation*}\label{def:Ui}
	U_i = (E^*_0V+E^*_1V+ \cdots +E^*_i V) \cap (E_0V + E_1V + \cdots + E_{d-i}V).
\end{equation*}
Then the sequence $\{U_i\}^d_{i=0}$ is a decomposition of $V$.
Observe that
\begin{equation}\label{U0, Ud}
	U_0 = E^*_0V, \qquad \qquad U_d = E_0V.
\end{equation}
Moreover, 
\begin{align}
	\label{action A on Ui}
	&&(A-\theta_{d-i}I)U_i & = U_{i+1} \qquad (0 \leq i \leq d-1), && (A-\theta_0I)U_d = 0,&& \\
	\label{action A* on Ui}
	&&(A^*-\theta^*_{i}I)U_i & = U_{i-1} \qquad (1 \leq i \leq d), && (A^*-\theta^*_0I)U_0 = 0.&&
\end{align}
The sequence $\lbrace U_i \rbrace_{i=0}^d$ is called the \emph{$\Phi$-split decomposition of $V$}.
By \eqref{U0, Ud} and \eqref{action A on Ui},
\begin{equation}\label{space Ui}
	U_i = (A-\theta_{d-i+1}I) \cdots (A-\theta_{d-1}I)(A-\theta_{d}I) E^*_0V \qquad \quad (0\leq i \leq d).
\end{equation}
Combining \eqref{action A on Ui} and \eqref{action A* on Ui} we find that for $1\leq i \leq d$,
\begin{equation}\label{eig-space U(i)}
	(A-\theta_{d-i+1}I)(A^*-\theta^*_iI) U_{i} = U_{i}.
\end{equation}
Observe that $U_i$ is invariant under $(A-\theta_{d-i+1}I)(A^*-\theta^*_iI)$ and the corresponding eigenvalue is a nonzero element of $\mathbb{F}$.
We denote this eigenvalue by $\phi_i$.

\begin{lemma}\label{lem:phi-eigspace U(i-1)}
With the above notation, for $1\leq i \leq d$ the subspace $U_{i-1}$ is invariant under $(A^*-\theta^*_iI)(A-\theta_{d-i+1}I)$ and the corresponding eigenvalue is $\phi_i$.
\end{lemma}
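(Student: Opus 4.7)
The plan is to read off the claim from the two action formulas already recorded. By (action A on Ui) with the index shift $i \mapsto i-1$, we have $(A - \theta_{d-i+1}I)U_{i-1} = U_i$ for $1 \leq i \leq d$, and by (action A* on Ui), $(A^* - \theta^*_i I)U_i = U_{i-1}$. Since each $U_j$ is one-dimensional and each of these two maps sends its one-dimensional domain onto a one-dimensional image, both maps are bijections between $U_{i-1}$ and $U_i$. Composing them therefore preserves $U_{i-1}$, so $(A^* - \theta^*_i I)(A - \theta_{d-i+1}I)$ must act on $U_{i-1}$ as multiplication by some scalar $\mu \in \mathbb{F}$.

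To identify $\mu$ with $\phi_i$, I would invoke the elementary fact that if $f \colon X \to Y$ and $g \colon Y \to X$ are linear maps between one-dimensional vector spaces, then $g \circ f$ and $f \circ g$ are multiplication by the same scalar (on $X$ and $Y$, respectively). Concretely, pick a nonzero $u \in U_{i-1}$ and set $v = (A - \theta_{d-i+1}I)u$, which is a nonzero element of $U_i$ by the bijectivity above. Write $(A^* - \theta^*_i I)v = \mu u$. Then on the one hand
\begin{equation*}
(A^* - \theta^*_i I)(A - \theta_{d-i+1}I)u = \mu u,
\end{equation*}
and on the other hand
\begin{equation*}
(A - \theta_{d-i+1}I)(A^* - \theta^*_i I)v = (A - \theta_{d-i+1}I)(\mu u) = \mu v.
\end{equation*}
But by (eig-space U(i)) and the definition of $\phi_i$, the left-hand side of the second equation is $\phi_i v$, forcing $\mu = \phi_i$.

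There is no real obstacle; the lemma is a direct companion to the definition of $\phi_i$, and its entire content is the observation that a product of two maps between one-dimensional spaces acts by the same scalar in either order. The only thing that must be checked is that the relevant maps are genuinely bijective rather than zero, which is supplied verbatim by (action A on Ui) and (action A* on Ui).
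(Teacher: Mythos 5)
Your proposal is correct and follows essentially the same route as the paper: pick a nonzero $u\in U_{i-1}$, map it into $U_i$ via $A-\theta_{d-i+1}I$, map back via $A^*-\theta^*_iI$, and use the one-dimensionality of the $U_j$ together with the defining eigenvalue $\phi_i$ from \eqref{eig-space U(i)} to see that the composite acts on $U_{i-1}$ by the same scalar. The only cosmetic difference is that you normalize $v=(A-\theta_{d-i+1}I)u$ (so the first scalar is $1$), while the paper carries two scalars $\lambda,\mu$ with $\lambda\mu=\phi_i$.
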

\begin{proof}
Pick $0\ne u\in U_{i-1}$ and $0\ne v \in U_i$.
By \eqref{action A on Ui}, there exists $0\ne \lambda \in \mathbb{F}$ such that $(A-\theta_{d-i+1}I)u = \lambda v$.
By \eqref{action A* on Ui}, there exists $0\ne \mu \in \mathbb{F}$ such that $(A^*-\theta^*_iI)v=\mu u$.
By these comments, we have $(A-\theta_{d-i+1}I)(A^*-\theta^*_iI)v = \lambda \mu v$.
By the comment below \eqref{eig-space U(i)}, we find $\lambda\mu  = \phi_i$.
Observe that $(A^*-\theta^*_iI)(A-\theta_{d-i+1}I)u = \lambda \mu u = \phi_i u$.
The result follows.
\end{proof}

\noindent
Fix a nonzero vector  $u^* \in E^*_0V$.
For $0\leq i \leq d$ define
\begin{equation}\label{phi-split basis}
	v_i = (A-\theta_{d-i+1}I) \cdots (A-\theta_{d-1}I)(A-\theta_{d}I)u^*.
\end{equation}
Comparing \eqref{space Ui} and \eqref{phi-split basis} we see that $v_i$ is a nonzero element in $U_i$. 
Therefore, the vectors $\{v_i\}^d_{i=0}$ form a basis for $V$.

\begin{proposition}\label{prop: CHpair mat split}
Consider the basis $\{v_i\}^d_{i=0}$ for $V$ from \eqref{phi-split basis}.
With respect to this basis, the matrices representing $A$ and $A^*$ are
\begin{equation}\label{ALP split mat}
	A: \quad \begin{pmatrix}
	\theta_d & & & & & \mathbf{0} \\
	1 & \theta_{d-1} \\
	 & 1 & \theta_{d-2} & \\
	 & & \cdot & \cdot \\
	 & & & \cdot & \cdot \\ 
	\mathbf{0} & & & & 1 & \theta_0
	\end{pmatrix},
	\qquad \qquad 
	A^*: \quad \begin{pmatrix}
	\theta^*_0 & \phi_1 & & & & \mathbf{0}\\
	 & \theta^*_1 & \phi_2 & \\
	 & & \theta^*_2 & \cdot \\
	 & & & \cdot & \cdot &\\
	 & & & & \cdot & \phi_d\\
	\mathbf{0} & & & & & \theta^*_d
	\end{pmatrix}.
\end{equation}
\end{proposition}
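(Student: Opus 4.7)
The plan is to verify the two matrix forms column by column, using the recursive definition of the vectors $v_i$ and the properties of the $\Phi$-split decomposition $\{U_i\}_{i=0}^d$.

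First I would handle $A$. By the definition \eqref{phi-split basis}, for $0 \le i \le d-1$ we have
\[
(A-\theta_{d-i}I)\,v_i = (A-\theta_{d-i}I)(A-\theta_{d-i+1}I)\cdots(A-\theta_d I)\,u^* = v_{i+1},
\]
so $A v_i = \theta_{d-i}\,v_i + v_{i+1}$, which gives the diagonal entry $\theta_{d-i}$ and subdiagonal entry $1$ in column $i$ of the matrix for $A$. For the last column, note $v_d \in U_d = E_0 V$ by \eqref{U0, Ud}, and $E_0 V$ is the $\theta_0$-eigenspace of $A$, so $Av_d = \theta_0\,v_d$. This produces exactly the claimed matrix for $A$.

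Next I would handle $A^*$. The zeroth column is easy: $v_0 = u^* \in E^*_0 V$, so $A^* v_0 = \theta^*_0\,v_0$. For $1 \le j \le d$, since $v_j \in U_j$, equation \eqref{action A* on Ui} gives $(A^*-\theta^*_j I)\,v_j \in U_{j-1}$, so there exists a scalar $\mu_j \in \mathbb{F}$ with
\[
(A^*-\theta^*_j I)\,v_j = \mu_j\, v_{j-1}.
\]
The main point is then to identify $\mu_j = \phi_j$. To do so, apply $A-\theta_{d-j+1}I$ to both sides. From \eqref{phi-split basis} we have $v_j = (A-\theta_{d-j+1}I)\,v_{j-1}$, hence the right-hand side becomes $\mu_j\,v_j$. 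The left-hand side equals $(A-\theta_{d-j+1}I)(A^*-\theta^*_j I)\,v_j$, which by the definition of $\phi_j$ (the sentence just below \eqref{eig-space U(i)}) equals $\phi_j\,v_j$. Comparing gives $\mu_j = \phi_j$, so $A^* v_j = \phi_j\,v_{j-1} + \theta^*_j\,v_j$, yielding the stated matrix for $A^*$.

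I do not expect any real obstacle: every ingredient is already supplied by the machinery of the $\Phi$-split decomposition. The only step that requires a moment of care is the identification $\mu_j = \phi_j$, which is resolved cleanly by the observation that multiplication by $A-\theta_{d-j+1}I$ sends $v_{j-1}$ to $v_j$, letting us convert the defining eigenvalue equation for $\phi_j$ on $U_j$ into a statement about the coefficient in front of $v_{j-1}$.
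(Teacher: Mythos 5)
Your proposal is correct and follows essentially the same route as the paper: the action of $A$ is read off directly from \eqref{phi-split basis} together with $v_d\in U_d=E_0V$, and the coefficient $\phi_j$ in $A^*v_j=\theta^*_jv_j+\phi_jv_{j-1}$ is obtained from the defining eigenvalue equation of $\phi_j$ on $U_j$. The only cosmetic difference is that you re-derive inline what the paper isolates as Lemma \ref{lem:phi-eigspace U(i-1)} (the eigenvalue of $(A^*-\theta^*_jI)(A-\theta_{d-j+1}I)$ on $U_{j-1}$), using the fact that $A-\theta_{d-j+1}I$ sends $v_{j-1}$ to $v_j$; the substance of the argument is identical.
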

\begin{proof}
Consider the action of $A$ on $\{v_i\}^d_{i=0}$.
By \eqref{phi-split basis}, for $0\leq i \leq d-1$ we have $(A-\theta_{d-i}I)v_i = v_{i+1}$, and therefore $Av_i = \theta_{d-i}v_i + v_{i+1}$.
By the equation on the right in \eqref{action A on Ui}, $v_d$ is an eigenvector of $A$ with eigenvalue $\theta_0$.
By these comments, the matrix on the left in \eqref{ALP split mat} represents $A$ with respect to $\{v_i\}^d_{i=0}$.
Next, we consider the action of $A^*$ on $\{v_i\}^d_{i=0}$.
By Lemma \ref{lem:phi-eigspace U(i-1)}, for $1 \leq i \leq d$ we have $(A^*-\theta^*_iI)v_i = (A^*-\theta^*_iI)(A-\theta_{d-i+1}I)v_{i-1} = \phi_iv_{i-1}$, and therefore $A^*v_i = \theta^*_iv_i + \phi_iv_{i-1}$.
By the equation on the right in \eqref{action A* on Ui}, $v_0$ is an eigenvector of $A^*$ with eigenvalue $\theta^*_0$.
By these comments, the matrix on the right in \eqref{ALP split mat} represents $A^*$ with respect to $\{v_i\}^d_{i=0}$.
\end{proof}
\noindent
We comment on Proposition \ref{prop: CHpair mat split}.
Comparing \eqref{CHP split mat} and \eqref{ALP split mat}, we see that $\{\phi_i\}^d_{i=1}$ is the $\Phi$-split sequence and  $\{v_i\}^d_{i=0}$ is a $\Phi$-split basis for $V$.

\begin{proposition}\label{rec <=> TD rels}
Let $\beta\in \mathbb{F}$ and let $\Phi=(A; \{E_i\}^d_{i=0}; A^*; \{E^*_i\}^d_{i=0})$ denote a CH system on $V$.
Then the following {\rm(i)}, {\rm(ii)} are equivalent:
\begin{enumerate}[\normalfont(i)]
	\item $\Phi$ is $\beta$-recurrent.
	\item There exists a sequence of scalars $\gamma, \gamma^*, \varrho, \varrho^*$ taken from $\mathbb{F}$ such that both \eqref{TD1} and \eqref{TD2} hold.
\end{enumerate}
\end{proposition}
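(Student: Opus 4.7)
The plan is to reduce the statement directly to Lemma \ref{lem:TD-beta rec} by passing to a $\Phi$-split basis. First I would invoke Proposition \ref{prop: CHpair mat split}, which produces a basis $\{v_i\}_{i=0}^d$ for $V$ with respect to which $A$ and $A^*$ are represented by precisely the matrices displayed in Definition \ref{def:A A*}, where $\{\theta_i\}_{i=0}^d$ and $\{\theta_i^*\}_{i=0}^d$ are the eigenvalue and dual eigenvalue sequences of $\Phi$, and $\{\phi_i\}_{i=1}^d$ is the $\Phi$-split sequence (as remarked immediately after Proposition \ref{prop: CHpair mat split}).

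Since the map $\EndV \to \MatF$ that sends each operator to its matrix representative with respect to $\{v_i\}_{i=0}^d$ is an isomorphism of algebras, the commutator relations \eqref{TD1} and \eqref{TD2} hold for the operators $A, A^*$ (for some $\gamma, \gamma^*, \varrho, \varrho^* \in \mathbb{F}$) if and only if the identical relations hold for the matrices $A, A^*$ of Definition \ref{def:A A*}. This is where the split basis does all the work: the matrix forms of $A$ and $A^*$ are exactly those to which Lemma \ref{lem:TD-beta rec} applies.

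Next I would apply Lemma \ref{lem:TD-beta rec} with the given scalar $\beta$ to the matrices in Definition \ref{def:A A*}. That lemma states that the existence of scalars $\gamma, \gamma^*, \varrho, \varrho^*$ in $\mathbb{F}$ for which \eqref{TD1} and \eqref{TD2} hold is equivalent to the joint condition that each of $\{\theta_i\}_{i=0}^d$, $\{\theta_i^*\}_{i=0}^d$, and the sequence $\{\vartheta_i\}_{i=0}^{d+1}$ from \eqref{vartheta} is $\beta$-recurrent. By Definition \ref{def beta-rec}, this latter conjunction is precisely the condition that the CH system $\Phi$ is $\beta$-recurrent, and the equivalence of (i) and (ii) follows.

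There is no genuine obstacle here: the argument is a short chain of equivalences. The only point that merits a moment of care is the identification of the scalars $\phi_i$ appearing in the split matrix representation with the entries of the $\Phi$-split sequence used in Definition \ref{def beta-rec}, which was already established in the commentary following Proposition \ref{prop: CHpair mat split}, and the observation that the same fixed $\beta$ is threaded through all three stages (the commutator relations, Lemma \ref{lem:TD-beta rec}, and Definition \ref{def beta-rec}).
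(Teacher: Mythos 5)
Your argument is correct and is exactly the paper's proof, which cites Lemma \ref{lem:TD-beta rec}, Definition \ref{def beta-rec}, and Proposition \ref{prop: CHpair mat split}; you have simply spelled out the transfer through the split basis and the algebra isomorphism $\EndV \to \MatF$ that the paper leaves implicit. No gaps.
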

\begin{proof}
By Lemma \ref{lem:TD-beta rec}, Definition \ref{def beta-rec} and Proposition \ref{prop: CHpair mat split}.
\end{proof}
\noindent
From Proposition \ref{rec <=> TD rels}, we find that a CH pair $A, A^*$ is recurrent if and only if $A, A^*$ satisfy the tridiagonal relations \eqref{TD1} and \eqref{TD2}.

\medskip
We now define the dual of a CH system.
\begin{definition}
Let $\Phi=(A; \{E_i\}^d_{i=0}; A^*; \{E^*_i\}^d_{i=0})$ denote a CH system on $V$.
Observe that $\Phi^*=(A^*; \{E^*_i\}^d_{i=0}; A; \{E_i\}^d_{i=0})$ is a CH system on $V$.
We call $\Phi^*$ the \emph{dual} of $\Phi$.
\end{definition}

\begin{lemma}
Let $\Phi=(A; \{E_i\}^d_{i=0}; A^*; \{E^*_i\}^d_{i=0})$ denote a CH system on $V$.
Let $\{U_i\}^d_{i=0}$ denote the $\Phi$-split decomposition of $V$.
Then $\{U_{d-i}\}^d_{i=0}$ is the $\Phi^*$-split decomposition of $V$.
Moreover, the eigenvalue sequences, the dual eigenvalue sequences, and the split sequences of $\Phi$ and $\Phi^*$ are related as follows.
$$
{\renewcommand{\arraystretch}{1.5}
\begin{tabular}{c|cccc}
	CH system & eigenvalue sequence & dual eigenvalue sequence & split sequence \\
	\hline
	$\Phi$ & $\{\theta_i\}^d_{i=0}$ & $\{\theta^*_i\}^d_{i=0}$ & $\{\phi_i\}^d_{i=1}$ \\
	$\Phi^*$ & $\{\theta^*_i\}^d_{i=0}$ & $\{\theta_i\}^d_{i=0}$ & $\{\phi_{d-i+1}\}^d_{i=1}$
\end{tabular}}
$$
\end{lemma}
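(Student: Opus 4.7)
The plan is to verify each assertion by directly applying the relevant definitions, reducing everything to the data of $\Phi$ itself with indices reversed. To begin, I would check that $\Phi^*$ is indeed a CH system on $V$; this is immediate from Definition \ref{Def:CHS} because conditions (iv) and (v) of that definition are symmetric in the roles played by $(A, \{E_i\})$ and $(A^*, \{E^*_i\})$, so swapping the two pairs simply interchanges (iv) and (v). Granting this, the eigenvalue sequence of $\Phi^*$ is the sequence of eigenvalues of the first listed map, which is now $A^*$, and whose eigenvalue associated with $E^*_i$ is $\theta^*_i$; similarly, the dual eigenvalue sequence of $\Phi^*$ is $\{\theta_i\}_{i=0}^d$. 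This accounts for the first two columns of the table.

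For the split decomposition, I would write out the definition of the split decomposition applied to $\Phi^*$: the $i$th subspace in the $\Phi^*$-split decomposition is
\[
U'_i = (E_0V + E_1V + \cdots + E_iV) \cap (E^*_0V + E^*_1V + \cdots + E^*_{d-i}V).
\]
Substituting $i \mapsto d-i$ and comparing with the definition of $U_i$, one immediately obtains $U'_{d-i} = U_i$, i.e., $\{U_{d-i}\}_{i=0}^d$ is the $\Phi^*$-split decomposition of $V$.

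For the split sequence $\{\phi'_i\}_{i=1}^d$ of $\Phi^*$, the characterization given after \eqref{eig-space U(i)} applied to $\Phi^*$ says that $\phi'_i$ is the eigenvalue of $(A^*-\theta^*_{d-i+1}I)(A-\theta_iI)$ acting on $U'_i = U_{d-i}$. Here is the one place where I would invoke earlier work rather than the bare definition: Lemma \ref{lem:phi-eigspace U(i-1)} tells us that for $1 \leq j \leq d$, the subspace $U_{j-1}$ is an eigenspace of $(A^*-\theta^*_jI)(A-\theta_{d-j+1}I)$ with eigenvalue $\phi_j$. Setting $j = d-i+1$, so that $j-1 = d-i$ and $d-j+1 = i$, this becomes the statement that $U_{d-i}$ is an eigenspace of $(A^*-\theta^*_{d-i+1}I)(A-\theta_iI)$ with eigenvalue $\phi_{d-i+1}$. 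Therefore $\phi'_i = \phi_{d-i+1}$, completing the table.

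There is no real obstacle here: the lemma is a bookkeeping exercise whose only subtlety is to keep the two index reversals consistent (once for the split decomposition under the swap $A \leftrightarrow A^*$, and once more to convert Lemma \ref{lem:phi-eigspace U(i-1)} into the form needed for $\Phi^*$). The cleanest presentation is probably to dispense with the split-sequence claim first by a single substitution into Lemma \ref{lem:phi-eigspace U(i-1)}, and to record the decomposition and eigenvalue statements as immediate observations from Definitions \ref{Def:CHS} and the definition of the $\Phi$-split decomposition.
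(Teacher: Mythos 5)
Your proof is correct: the index manipulations for the split decomposition, the eigenvalue/dual-eigenvalue swap, and the use of Lemma \ref{lem:phi-eigspace U(i-1)} with $j=d-i+1$ to identify the $\Phi^*$-split sequence as $\{\phi_{d-i+1}\}_{i=1}^d$ all check out. The paper dismisses this lemma as ``Routine,'' and your argument is exactly the routine verification the author had in mind, so there is nothing further to compare.
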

\begin{proof}
Routine.
\end{proof}

\noindent
Earlier we defined the $\Phi$-split basis of $V$. 
We now describe the $\Phi^*$-split basis of $V$.
Fix a nonzero vector $u \in E_0V$.
For $0\leq i \leq d$ define
\begin{equation}\label{phi*-split basis}
	v^*_i = (A^*-\theta^*_{d-i+1}I) \cdots (A^*-\theta^*_{d-1}I)(A^*-\theta^*_{d}I)u.
\end{equation}
Then $\{v^*_i\}^d_{i=0}$ is the $\Phi^*$-split basis for $V$.
Applying Proposition \ref{prop: CHpair mat split} to $\Phi^*$, we find that the matrices representing $A$ and $A^*$ with respect to $\{v^*_i\}^d_{i=0}$ are
\begin{equation}\label{ALP dual split mat}
	A: \quad \begin{pmatrix}
	\theta_0 & \phi_d & & & & \mathbf{0}\\
	 & \theta_1 & \phi_{d-1} & \\
	 & & \theta_2 & \cdot \\
	 & & & \cdot & \cdot &\\
	 & & & & \cdot & \phi_1\\
	\mathbf{0} & & & & & \theta_d
	\end{pmatrix},
	\qquad \qquad 
	A^*: \quad \begin{pmatrix}
	\theta^*_d & & & & & \mathbf{0} \\
	1 & \theta^*_{d-1} \\
	 & 1 & \theta^*_{d-2} & \\
	 & & \cdot & \cdot \\
	 & & & \cdot & \cdot \\ 
	\mathbf{0} & & & & 1 & \theta^*_0
	\end{pmatrix}.
\end{equation}

Next, we discuss how the $\Phi$-split bases and the $\Phi^*$-split bases are related.

\begin{definition}\label{notation epsilon}
Let $\Phi=(A; \{E_i\}^d_{i=0}; A^*; \{E^*_i\}^d_{i=0})$ denote a CH system on $V$.
Recall the nonzero vector $u^*\in E^*_0V$ from above line \eqref{phi-split basis} and the nonzero vector $u \in E_0V$ from above line \eqref{phi*-split basis}.
Observe that $E_0u^* \in E_0V$ and $E^*_0u \in E^*_0V$.
Therefore, there exist scalars $\varepsilon, \varepsilon^* \in \mathbb{F}$ such that
\begin{equation*}\label{scalars epsilon}
	E_0u^*=\varepsilon u, \qquad \qquad
	E^*_0u=\varepsilon^* u^*.
\end{equation*}
\end{definition}

\begin{lemma}\label{lem:tr(E0E*0)}
With reference to Definition \rm{\ref{notation epsilon}}, $\mathrm{tr}(E_0E^*_0)  =  \varepsilon\varepsilon^*$.
\end{lemma}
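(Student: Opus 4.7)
The plan is to exploit the fact that $E_0$ and $E_0^*$ are rank-one idempotents, so their product is rank-one and its trace can be read off from a single well-chosen basis.

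First I would note that since $A$ is multiplicity-free, each eigenspace $E_iV$ has dimension one; in particular $E_0V = \mathbb{F}u$, and $E_0$ is the projection onto $\mathbb{F}u$ along $\sum_{i=1}^{d} E_iV = (I-E_0)V$. Symmetrically, $E_0^*V = \mathbb{F}u^*$, and $E_0^*$ is the projection onto $\mathbb{F}u^*$ along $(I-E_0^*)V$.

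Next I would choose a basis of $V$ adapted to $E_0$: let $w_0 = u$ and extend by any basis $w_1,\dots,w_d$ of $(I-E_0)V$. Then $E_0 w_0 = u = w_0$ and $E_0 w_j = 0$ for $1 \le j \le d$. Hence for every $j$, the vector $E_0E_0^* w_j$ lies in $E_0V = \mathbb{F}u$, so its $w_j$-coefficient vanishes whenever $j \ge 1$. Consequently
\begin{equation*}
\mathrm{tr}(E_0E_0^*) = [\,E_0E_0^* w_0\,]_{w_0},
\end{equation*}
where $[\,\cdot\,]_{w_0}$ denotes the coefficient in the $w_0$-direction. Using the two defining relations $E_0^* u = \varepsilon^* u^*$ and $E_0 u^* = \varepsilon u$ from Definition \ref{notation epsilon}, one computes
\begin{equation*}
E_0E_0^* w_0 = E_0E_0^* u = E_0(\varepsilon^* u^*) = \varepsilon^* E_0 u^* = \varepsilon\varepsilon^* u = \varepsilon\varepsilon^* w_0,
\end{equation*}
so the coefficient in question equals $\varepsilon\varepsilon^*$, completing the calculation.

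There is no real obstacle here; the only point requiring care is to make sure the basis is chosen so that $E_0$ is represented by the matrix unit $E_{00}$, which makes the off-diagonal contributions to the trace manifestly zero. The entire argument only uses that each of $E_0, E_0^*$ is a rank-one idempotent together with the two scalar identities defining $\varepsilon$ and $\varepsilon^*$; no structural properties of the CH system beyond multiplicity-freeness (guaranteed by Lemma \ref{lem:CHP->HP}) are needed.
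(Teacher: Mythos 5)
Your proof is correct. It differs in flavor from the paper's argument: the paper never picks a basis; instead it applies $E^*_0$ to the relation $E_0u^*=\varepsilon u$ to get $E^*_0E_0E^*_0u^*=\varepsilon\varepsilon^* u^*$, upgrades this to the operator identity $E^*_0E_0E^*_0=\varepsilon\varepsilon^* E^*_0$ (both sides vanish off $E^*_0V$), and then takes traces, using cyclicity of the trace, $E^{*2}_0=E^*_0$, and $\mathrm{tr}(E^*_0)=1$ to conclude $\mathrm{tr}(E_0E^*_0)=\varepsilon\varepsilon^*$. You instead compute the trace of $E_0E^*_0$ directly in a basis $u=w_0,w_1,\ldots,w_d$ adapted to the decomposition $V=E_0V\oplus(I-E_0)V$, observing that every column of $E_0E^*_0$ lands in $\mathbb{F}u$, so only the $(0,0)$-entry contributes, and that entry is $\varepsilon\varepsilon^*$ by the two defining relations of Definition \ref{notation epsilon}. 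Both routes use exactly the same ingredients (the rank-one idempotents $E_0$, $E^*_0$ and the scalars $\varepsilon$, $\varepsilon^*$); yours trades the trace identities $\mathrm{tr}(XY)=\mathrm{tr}(YX)$ and $\mathrm{tr}(E^*_0)=1$ for an explicit matrix-entry computation, which is slightly more hands-on but equally elementary, while the paper's version stays basis-free and records the reusable identity $E^*_0E_0E^*_0=\varepsilon\varepsilon^* E^*_0$ along the way.
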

\begin{proof}
Recall the equation $E_0u^* =\varepsilon u$.
Applying $E^*_0$ to both sides and using $E^*_0u=\varepsilon^* u^*$, we have $E^*_0E_0u^* =\varepsilon\varepsilon^* u^*$. 
Since $u^*=E^*_0u^*$, it follows that $E^*_0E_0E^*_0 u^*=  \varepsilon\varepsilon^* E^*_0u^*$.
Therefore, we have $E^*_0E_0E^*_0 =  \varepsilon\varepsilon^* E^*_0$.
Take the trace of both sides to get $\mathrm{tr}(E^*_0E_0E^*_0)  =  \varepsilon\varepsilon^* \mathrm{tr}(E^*_0)$.
Since $\mathrm{tr}(E^*_0E_0E^*_0)=\mathrm{tr}(E_0E^*_0E^*_0)=\mathrm{tr}(E_0E^*_0)$ and $\mathrm{tr}(E^*_0)=1$, the result follows.
\end{proof}

\begin{lemma}\label{scalar epsilon epsilon*} 
With reference to Definition \rm{\ref{notation epsilon}},
\begin{equation*}
	\varepsilon \varepsilon^* = \frac{\phi_1\phi_2 \cdots \phi_d}{(\theta_0 - \theta_1)(\theta_0 - \theta_2) \cdots (\theta_0 - \theta_d)(\theta^*_0 - \theta^*_1)(\theta^*_0 - \theta^*_2) \cdots (\theta^*_0 - \theta^*_d)}.
\end{equation*}
\end{lemma}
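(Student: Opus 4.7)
\end{lemma}

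The plan is to compute the single quantity $\prod_{j=1}^d(A^*-\theta^*_jI)\,v_d$ in two different ways and equate the results; this produces the claimed formula for $\varepsilon\varepsilon^*$ directly, without having to pass through the trace identity of Lemma \ref{lem:tr(E0E*0)}.

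First I would set up two pairs of identities coming from \eqref{phi-split basis}, its dual analogue \eqref{phi*-split basis}, and the explicit formula \eqref{prim idemp} for the primitive idempotents. Taking $i=d$ in \eqref{phi-split basis} and using commutativity of polynomials in $A$ gives
\[
v_d=\prod_{j=1}^d(A-\theta_jI)\,u^{*}=\Bigl(\prod_{j=1}^d(\theta_0-\theta_j)\Bigr)E_0u^{*}=\varepsilon\prod_{j=1}^d(\theta_0-\theta_j)\,u,
\]
where the middle equality is \eqref{prim idemp} and the last step uses $E_0u^{*}=\varepsilon u$ from Definition \ref{notation epsilon}. The same reasoning applied to the dual CH system $\Phi^{*}$, whose split basis is constructed from $u$ via \eqref{phi*-split basis}, yields both
\[
v^{*}_d=\prod_{j=1}^d(A^{*}-\theta^{*}_jI)\,u\qquad\text{and}\qquad v^{*}_d=\varepsilon^{*}\prod_{j=1}^d(\theta^{*}_0-\theta^{*}_j)\,u^{*}.
\]

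Now for the two evaluations of $\prod_{j=1}^d(A^{*}-\theta^{*}_jI)\,v_d$. By Proposition \ref{prop: CHpair mat split}, the action of $A^{*}$ on the $\Phi$-split basis satisfies $(A^{*}-\theta^{*}_iI)v_i=\phi_iv_{i-1}$ for $1\le i\le d$. Applying the commuting factors $A^{*}-\theta^{*}_jI$ to $v_d$ in the order $j=d,d-1,\ldots,1$ telescopes to
\[
\prod_{j=1}^d(A^{*}-\theta^{*}_jI)\,v_d=\phi_d\phi_{d-1}\cdots\phi_1\,v_0=\phi_1\phi_2\cdots\phi_d\,u^{*}.
\]
Alternatively, substituting the expression for $v_d$ from the previous paragraph and then invoking both identities for $v^{*}_d$ gives
\[
\prod_{j=1}^d(A^{*}-\theta^{*}_jI)\,v_d=\varepsilon\prod_{j=1}^d(\theta_0-\theta_j)\cdot v^{*}_d=\varepsilon\varepsilon^{*}\prod_{j=1}^d(\theta_0-\theta_j)(\theta^{*}_0-\theta^{*}_j)\,u^{*}.
\]
Equating the resulting coefficients of $u^{*}$ and dividing by the nonzero product $\prod_j(\theta_0-\theta_j)(\theta^{*}_0-\theta^{*}_j)$ yields the formula in the statement.

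I do not anticipate any serious obstacle: the two identities for $v^{*}_d$ are exact duals of those for $v_d$ (the definition of $\Phi^{*}$ swaps the roles of $A$ and $A^{*}$), and the telescoping only uses the action of $A^{*}$ on the $\Phi$-split basis already recorded in Proposition \ref{prop: CHpair mat split}. If anything, the step that requires a moment of care is the choice to apply the factors of $\prod_{j=1}^d(A^{*}-\theta^{*}_jI)$ in the particular order $j=d,d-1,\ldots,1$, which is justified because these factors are all polynomials in $A^{*}$ and therefore commute.
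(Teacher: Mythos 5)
Your proof is correct, but it takes a genuinely different route from the paper. The paper's own proof is a two-line reduction: it invokes Lemma \ref{lem:tr(E0E*0)} (which gives $\mathrm{tr}(E_0E^*_0)=\varepsilon\varepsilon^*$) and then cites Lemmas 7.5 and 7.6 of \cite{2010GodjLAA}, where the trace $\mathrm{tr}(E_0E^*_0)$ (equivalently the scalar $\nu^{-1}$) is computed in terms of the parameter array for a general Hessenberg system. You instead bypass the trace identity entirely and give a self-contained computation inside this paper's machinery: you evaluate $\prod_{j=1}^d(A^*-\theta^*_jI)v_d$ once by telescoping with $(A^*-\theta^*_iI)v_i=\phi_i v_{i-1}$ (Proposition \ref{prop: CHpair mat split}), and once by collapsing $v_d=\varepsilon\prod_{j=1}^d(\theta_0-\theta_j)\,u$ via \eqref{prim idemp} and then identifying $\prod_{j=1}^d(A^*-\theta^*_jI)u=v^*_d=\varepsilon^*\prod_{j=1}^d(\theta^*_0-\theta^*_j)\,u^*$; since $u^*\ne 0$ and the eigenvalues are mutually distinct, equating coefficients gives the formula. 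All ingredients you use (\eqref{phi-split basis}, \eqref{phi*-split basis}, \eqref{prim idemp}, Proposition \ref{prop: CHpair mat split}, Definition \ref{notation epsilon}) precede the lemma, so there is no circularity; in effect your argument is the $i=0$ case of what later becomes Proposition \ref{prop:v v*}(i) combined with the dual of \eqref{eq: v_d}. What each approach buys: the paper's proof is shorter but outsources the real computation to Godjali's results on thin Hessenberg systems, while yours is slightly longer but fully self-contained and makes the origin of the product formula transparent.
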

\begin{proof}
By Lemma \ref{lem:tr(E0E*0)} and \cite[Lemma 7.5, Lemma 7.6]{2010GodjLAA}.
\end{proof}
\noindent
The scalars $\varepsilon$, $\varepsilon^*$ are nonzero by Lemma \ref{scalar epsilon epsilon*}.

\begin{note}
In \cite[Section 7]{2010GodjLAA}, a nonzero scalar $\nu$ was introduced to study Hessenberg systems.
We have $\nu^{-1}=\varepsilon\varepsilon^*$.
\end{note}

\begin{proposition}\label{prop:v v*}
Recall the $\Phi$-split basis $\{v_i\}^d_{i=0}$ from \eqref{phi-split basis} and the $\Phi^*$-split basis $\{v^*_i\}^d_{i=0}$ from \eqref{phi*-split basis}.  
Then the following {\rm(i)}, {\rm(ii)} hold:
\begin{enumerate}[\normalfont(i)]
	\item $v_i = \varepsilon  \dfrac{(\theta_0-\theta_{1})(\theta_0-\theta_{2}) \cdots (\theta_0 - \theta_{d})}{\phi_{i+1}\phi_{i+2} \cdots \phi_{d}}v^*_{d-i}$  \qquad   $(0\leq i \leq d)$.
	\item $v^*_i = \varepsilon^*  \dfrac{(\theta^*_0-\theta^*_{1})(\theta^*_0-\theta^*_{2}) \cdots (\theta^*_0 - \theta^*_{d})}{\phi_{1}\phi_{2} \cdots \phi_{d-i}}v_{d-i}$  \qquad $(0\leq i \leq d)$.
\end{enumerate}
\end{proposition}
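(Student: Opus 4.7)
The plan is to exploit the one-dimensionality of $U_i$. By construction $v_i \in U_i$, and since $\{U_{d-i}\}_{i=0}^d$ is the $\Phi^*$-split decomposition, $v^*_{d-i}\in U_i$ as well. Thus there exists a nonzero scalar $c_i \in \mathbb{F}$ with $v_i = c_i v^*_{d-i}$, and the whole task is to compute $c_i$. I would first pin down the base case $i=d$, then obtain a one-step recurrence in $i$, and finally solve it.

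For the base case, take $i=d$: by \eqref{phi-split basis} we have $v_d = (A-\theta_1I)(A-\theta_2I)\cdots(A-\theta_dI)u^*$, while $v^*_0 = u$. Using the formula \eqref{prim idemp} for $E_0$ and the fact that $u^* \in E^*_0V$ with $E_0u^* = \varepsilon u$ (Definition~\ref{notation epsilon}), I would rewrite $(A-\theta_1I)\cdots(A-\theta_dI) = \prod_{j=1}^d(\theta_0-\theta_j)\, E_0$ on the image of any vector, yielding $v_d = \varepsilon\prod_{j=1}^d(\theta_0-\theta_j)\, v^*_0$. This gives $c_d = \varepsilon\prod_{j=1}^d(\theta_0-\theta_j)$, matching (i) at $i=d$ (with the empty product in the denominator).

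For the descent step, apply $A-\theta_{d-i}I$ to both sides of $v_i = c_i v^*_{d-i}$ for $0\le i\le d-1$. On the left, \eqref{phi-split basis} gives $(A-\theta_{d-i}I)v_i = v_{i+1}$. On the right, I would read off from the matrix representation of $A$ in the $\Phi^*$-split basis (the left matrix in \eqref{ALP dual split mat}) the identity $(A-\theta_jI)v^*_j = \phi_{d-j+1}v^*_{j-1}$, which at $j = d-i$ becomes $(A-\theta_{d-i}I)v^*_{d-i} = \phi_{i+1}v^*_{d-i-1}$. Equating the two sides gives the clean recurrence $c_{i+1} = \phi_{i+1}\, c_i$, and iterating downward from $i=d$ yields
\begin{equation*}
c_i = \frac{c_d}{\phi_{i+1}\phi_{i+2}\cdots\phi_d} = \varepsilon\,\frac{(\theta_0-\theta_1)(\theta_0-\theta_2)\cdots(\theta_0-\theta_d)}{\phi_{i+1}\phi_{i+2}\cdots\phi_d},
\end{equation*}
which is precisely (i).

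For (ii), I would invoke the duality $\Phi \leftrightarrow \Phi^*$: applying (i) to the dual CH system $\Phi^*$ and using the table from the lemma just above (so $\{\theta_i\}$, $\{\theta^*_i\}$, and $\{\phi_i\}$ get swapped/reversed appropriately, and $\varepsilon$ and $\varepsilon^*$ switch roles) produces exactly the claimed formula for $v^*_i$ in terms of $v_{d-i}$. The only mild obstacle I foresee is bookkeeping: verifying that the $(A-\theta_jI)v^*_j = \phi_{d-j+1}v^*_{j-1}$ identity is read off correctly from \eqref{ALP dual split mat} (a column-versus-row convention check), and confirming that the index reversal in the split sequence of $\Phi^*$ turns $\phi_{i+1}\cdots\phi_d$ into $\phi_1\cdots\phi_{d-i}$ for part (ii). Both are routine once the conventions are fixed.
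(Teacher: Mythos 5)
Your proof is correct, and it reaches the formula by a slightly different mechanism than the paper. The paper proves (i) by running the $A^*$-action downward on the $\Phi$-split basis: from $A^*v_i=\theta^*_iv_i+\phi_iv_{i-1}$ it writes $v_i=(A^*-\theta^*_{i+1}I)\cdots(A^*-\theta^*_dI)v_d/(\phi_{i+1}\cdots\phi_d)$, computes $v_d=\varepsilon(\theta_0-\theta_1)\cdots(\theta_0-\theta_d)u$ exactly as in your base case, and then recognizes $(A^*-\theta^*_{i+1}I)\cdots(A^*-\theta^*_dI)u$ as $v^*_{d-i}$ straight from the defining formula \eqref{phi*-split basis}; for (ii) it simply says ``similar.'' You instead set up the proportionality $v_i=c_iv^*_{d-i}$ from the one-dimensionality of $U_i$ (via the lemma that $\{U_{d-i}\}^d_{i=0}$ is the $\Phi^*$-split decomposition), and determine $c_i$ by a recurrence coming from the $A$-raising action, reading $(A-\theta_{d-i}I)v^*_{d-i}=\phi_{i+1}v^*_{d-i-1}$ off the left matrix in \eqref{ALP dual split mat}; you then get (ii) by applying (i) to $\Phi^*$ with the duality table. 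The trade-off: your argument leans on more prior material (the dual split sequence $\{\phi_{d-i+1}\}$ and the representation \eqref{ALP dual split mat}, plus the split-decomposition lemma), whereas the paper's proof of (i) needs only \eqref{ALP split mat} and the definition \eqref{phi*-split basis}; on the other hand, your duality argument for (ii) makes the index bookkeeping ($\phi_{i+1}\cdots\phi_d$ becoming $\phi_1\cdots\phi_{d-i}$, and $\varepsilon\leftrightarrow\varepsilon^*$) explicit rather than leaving it as ``similar.'' One cosmetic point: the identity $(A-\theta_1I)\cdots(A-\theta_dI)=(\theta_0-\theta_1)\cdots(\theta_0-\theta_d)E_0$ holds as an operator identity (both sides are polynomials in $A$, by \eqref{prim idemp}), so the hedge ``on the image of any vector'' is unnecessary.
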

\begin{proof}
(i): By the matrix on the left in \eqref{ALP split mat}, we have $A^*v_i = \theta^*_iv_i + \phi_iv_{i-1}$ for $1\leq i \leq d$.
This implies that $v_{i-1} = (A^*-\theta^*_iI)v_i/\phi_i$.
By induction on $i$, we have
\begin{equation}\label{eq: v_i formula}
	v_i = \frac{(A^*-\theta^*_{i+1}I)(A^*-\theta^*_{i+2}I) \cdots (A^*-\theta^*_{d}I)v_d}{\phi_{i+1}\phi_{i+2} \cdots \phi_{d}} \qquad \qquad (0\leq i \leq d-1).
\end{equation}
Evaluating \eqref{phi-split basis} at $i=d$ and using \eqref{prim idemp}, we have
\begin{equation}\label{eq: v_d}
	v_d = (A-\theta_{1}I)(A-\theta_{2}I) \cdots (A-\theta_{d}I)u^*
	= (\theta_0-\theta_{1})(\theta_0-\theta_{2}) \cdots (\theta_0 - \theta_{d})E_0u^*.
\end{equation}
Eliminate $v_d$ in \eqref{eq: v_i formula} using \eqref{eq: v_d} along with $E_0u^*=\varepsilon u$, and simplify the result to get
\begin{equation}\label{eq: v_d(2)}
	v_i = \varepsilon  \frac{(\theta_0-\theta_{1})(\theta_0-\theta_{2}) \cdots (\theta_0 - \theta_{d})}{\phi_{i+1}\phi_{i+2} \cdots \phi_{d}}(A^*-\theta^*_{i+1}I)(A^*-\theta^*_{i+2}I) \cdots (A^*-\theta^*_{d}I)u
\end{equation}
for  $0\leq i \leq d-1$.
By \eqref{phi*-split basis} we note that $v^*_{d-i}=(A^*-\theta^*_{i+1}I)(A^*-\theta^*_{i+2}I) \cdots (A^*-\theta^*_{d}I)u$.
Hence, (i) follows.

\smallskip
\noindent
(ii): Similar to (i).
\end{proof}

We finish this section with a comment.
Let $\Phi$ denote a CH system on $V$.
By the \emph{parameter array} of $\Phi$, we mean the sequence  $(\{\theta_i\}^d_{i=0}, \{\theta^*_i\}^d_{i=0}, \{\phi_i\}^d_{i=1})$, where $\{\theta_i\}^d_{i=0}$ is the eigenvalue sequence of $\Phi$,  $\{\theta^*_i\}^d_{i=0}$ is the dual eigenvalue sequence of $\Phi$, and $\{\phi_i\}^d_{i=1}$ is the $\Phi$-split sequence.
\begin{lemma}\label{iso_CHsystem}
Let $\Phi$ and $\Phi'$ denote CH systems over $\mathbb{F}$.
Then $\Phi$ and $\Phi'$ are isomorphic if and only if they have the same parameter array.
\end{lemma}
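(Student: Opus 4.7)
The plan is to handle the two implications separately, relying on Proposition \ref{prop: CHpair mat split} as the main engine for the backward direction, and on the intrinsic characterization of the split sequence via \eqref{eig-space U(i)} for the forward direction.

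For the forward implication, suppose $\sigma: V \to V'$ is an isomorphism of CH systems from $\Phi$ to $\Phi'$, so by the definition of $\Phi^\sigma$ we have $\sigma A \sigma^{-1} = A'$, $\sigma E_i \sigma^{-1} = E'_i$, and analogously for the starred elements. First, I would extract the eigenvalue sequences: applying $\sigma(\cdot)\sigma^{-1}$ to $A E_i = \theta_i E_i$ yields $A' E'_i = \theta_i E'_i$, so $\theta_i = \theta'_i$, and the same reasoning handles $\theta^*_i = \theta^{*\prime}_i$. For the split sequence, I would show that $\sigma$ carries the $\Phi$-split decomposition $\{U_i\}^d_{i=0}$ onto the $\Phi'$-split decomposition $\{U'_i\}^d_{i=0}$; this is immediate from the defining intersection formula for $U_i$ together with $\sigma E_j V = E'_j V'$ and $\sigma E^*_j V = E^{*\prime}_j V'$. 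Since $\phi_i$ is characterized in \eqref{eig-space U(i)} as the unique eigenvalue of $(A-\theta_{d-i+1}I)(A^*-\theta^*_iI)$ on $U_i$ and all the ingredients on the right are preserved under conjugation by $\sigma$, we conclude $\phi_i = \phi'_i$.

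For the backward implication, assume $\Phi$ and $\Phi'$ have the same parameter array. By Proposition \ref{prop: CHpair mat split} applied to each system, there exist a $\Phi$-split basis $\{v_i\}^d_{i=0}$ of $V$ and a $\Phi'$-split basis $\{v'_i\}^d_{i=0}$ of $V'$ such that the matrices in \eqref{ALP split mat} representing $A, A^*$ with respect to $\{v_i\}^d_{i=0}$ coincide with those representing $A', A^{*\prime}$ with respect to $\{v'_i\}^d_{i=0}$. Let $\sigma: V \to V'$ be the linear extension of $v_i \mapsto v'_i$. By construction $\sigma$ is a vector space isomorphism satisfying $\sigma A = A'\sigma$ and $\sigma A^* = A^{*\prime}\sigma$. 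To upgrade $\sigma$ to an isomorphism of CH systems, I would invoke \eqref{prim idemp}: the primitive idempotent $E_i$ is a polynomial in $A$ with coefficients determined solely by $\{\theta_j\}^d_{j=0}$, and since $\theta_j = \theta'_j$, the very same polynomial evaluated at $A'$ returns $E'_i$; consequently $\sigma E_i \sigma^{-1} = E'_i$. The identical argument applied to $A^*$ and $\{\theta^*_j\}^d_{j=0}$ yields $\sigma E^*_i \sigma^{-1} = E^{*\prime}_i$, so $\Phi^\sigma = \Phi'$.

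The only nontrivial step is the invariance of the split sequence under isomorphism in the forward direction; once one unwinds the definition of $U_i$ and appeals to \eqref{eig-space U(i)}, the match $\phi_i = \phi'_i$ drops out automatically, and the remainder of the proof is bookkeeping.
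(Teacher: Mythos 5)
Your proof is correct, but it takes a different route from the paper: the paper disposes of this lemma in one line by citing Godjali's classification result for thin Hessenberg systems (\cite[Theorem 6.3]{2010GodjLAA}), of which CH systems are a special case, whereas you give a self-contained argument using only the machinery of Section \ref{Sec:Phi-split seq}. Your forward direction is sound: conjugating $AE_i=\theta_iE_i$ by $\sigma$ pins down the eigenvalue sequences, $\sigma$ carries the $\Phi$-split decomposition onto the $\Phi'$-split decomposition because sums, intersections and the spaces $E_jV$, $E^*_jV$ are preserved, and then \eqref{eig-space U(i)} (together with the identification of that eigenvalue with the split sequence made after Proposition \ref{prop: CHpair mat split}) forces $\phi_i=\phi'_i$; one could alternatively get this from the uniqueness clause in \cite[Proposition 5.9]{2010GodjLAA} by transporting a $\Phi$-split basis through $\sigma$. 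Your backward direction is the expected one: equal parameter arrays give identical matrix representations \eqref{ALP split mat} on the respective split bases, the basis-matching map intertwines $A,A^*$ with $A',A^{*\prime}$, and \eqref{prim idemp} upgrades this to matching the primitive idempotents, hence $\Phi^\sigma=\Phi'$. What the paper's citation buys is brevity and reuse of a result already proved in the more general Hessenberg setting; what your argument buys is that the lemma becomes independent of \cite{2010GodjLAA}, at the cost of essentially reproving a special case of that theorem.
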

\begin{proof}
By \cite[Theorem 6.3]{2010GodjLAA}.
\end{proof}

%%%%%%%%%%%%%%%%%%%%%%%%%%%%%%%%%%%%%%%%
%%%%%%%%%%%%%%%%%%%%%%%%%%%%%%%%%%%%%%%%
%%%%%%%%%%%%%%%%%%%%%%%%%%%%%%%%%%%%%%%%
\section{The proof of Theorem \ref{mainthm}}\label{Sec:pf Thm(TD=>CHP)}
In this section we prove Theorem \ref{mainthm}.
Throughout this section, we refer to Notation \ref{notation:scalars} and Definition \ref{def:A A*}.
%Let the scalars $\{\theta_i\}^d_{i=0}$, $\{\theta^*_i\}^d_{i=0}$, $\{\phi_i\}^d_{i=1}$ be as in Notation \ref{notation:scalars}.
%Let the sequence $\{\vartheta_i\}^{d+1}_{i=0}$ be as in  \eqref{vartheta}.
%Consider the matrices $A, A^*$ in \eqref{eq:matrices form}.
%Clearly, each of $A, A^*$ is multiplicity-free.
%For $0\leq i \leq d$, let $E_i$ (resp. $E^*_i$) denote the primitive idempotent of $A$ (resp. $A^*$) associated with $\theta_i$ (resp. $\theta^*_i$).
Shortly we will be referring to the results of \cite{2020TerNote}. 
We note that the element $E_i$ in \cite{2020TerNote} corresponds to the $E_{d-i}$ in the present paper.

\begin{lemma}\label{lem: vanishing prod terms(1)}
The following {\rm(i), (ii)} hold. For $0\leq i, j \leq d$, 
\begin{enumerate}[\normalfont(i)]
	\item $E_iA^*E_j=
	\begin{cases}
	0 &\text{if} \quad i-j>1, \\
	\ne 0 &\text{if} \quad i-j=1.
	\end{cases}$
	\item $E^*_iAE^*_j=
	\begin{cases}
	0 &\text{if} \quad i-j>1, \\
	\ne 0 &\text{if} \quad i-j=1.
	\end{cases}$
\end{enumerate}
\end{lemma}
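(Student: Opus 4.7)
The plan is to work directly with the matrices $A, A^*$ of Definition~\ref{def:A A*} in the standard basis $e_0,\ldots,e_d$ of $\mathbb{F}^{d+1}$. In that basis $A e_k = \theta_{d-k} e_k + e_{k+1}$ for $k < d$ and $A e_d = \theta_0 e_d$, while $A^* e_k = \theta^*_k e_k + \phi_k e_{k-1}$ for $k \geq 1$ and $A^* e_0 = \theta^*_0 e_0$. The strategy for part~(i) is to locate the one-dimensional eigenspaces $E_j V$ of $A$ inside a suitable increasing flag and then to use the upper-bidiagonal form of $A^*$ to control how much that flag can grow under $A^*$; part~(ii) then follows by the analogous argument with the roles of $A$ and $A^*$ interchanged.

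For part~(i), I first solve $(A - \theta_j I)f_j = 0$: using the distinctness of $\{\theta_k\}_{k=0}^d$, the recurrence $c_{k-1} = c_k(\theta_j - \theta_{d-k})$ together with the boundary condition $c_0(\theta_d - \theta_j) = 0$ forces $c_0 = c_1 = \cdots = c_{d-j-1} = 0$ and $c_{d-j} \neq 0$, so the (up to scalar unique) eigenvector $f_j \in E_j V$ has the form $f_j = \sum_{k=d-j}^d c_k^{(j)} e_k$ with $c_{d-j}^{(j)} \neq 0$. A triangularity argument on the leading positions then identifies
\begin{equation*}
V_j := E_0 V + E_1 V + \cdots + E_j V = \mathrm{span}\{e_{d-j}, e_{d-j+1}, \ldots, e_d\}, \qquad 0 \leq j \leq d.
\end{equation*}
Since $A^* e_k \in \mathrm{span}\{e_{k-1}, e_k\}$, I obtain $A^* V_j \subseteq V_{j+1}$, and because $E_i V_{j+1} = 0$ whenever $i > j+1$ this gives $E_i A^* E_j = 0$ for $i - j > 1$. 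For the non-vanishing part, I compute $A^* f_j$ and note that among all terms $c_k^{(j)} A^* e_k$ with $k \geq d-j$, only $c_{d-j}^{(j)} A^* e_{d-j}$ can contribute to the coefficient of $e_{d-j-1}$; that contribution equals $c_{d-j}^{(j)} \phi_{d-j} \neq 0$ by Notation~\ref{notation:scalars}(ii). Hence $A^* f_j \in V_{j+1} \setminus V_j$, so its image in the one-dimensional quotient $V_{j+1}/V_j$ is a nonzero multiple of the class of $f_{j+1}$, giving $E_{j+1} A^* E_j \neq 0$.

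Part~(ii) is handled by the symmetric argument. Solving $(A^* - \theta^*_j I) g_j = 0$ with the upper-bidiagonal form of $A^*$ yields $g_j \in E^*_j V$ of the form $g_j = \sum_{k=0}^{j} g^{(j)}_k e_k$ with $g^{(j)}_j \neq 0$, so the dual flag is $V^*_j := E^*_0 V + \cdots + E^*_j V = \mathrm{span}\{e_0, e_1, \ldots, e_j\}$. The lower-bidiagonal form of $A$ gives $A V^*_j \subseteq V^*_{j+1}$, and the coefficient of $e_{j+1}$ in $A g_j$ is exactly $g^{(j)}_j \neq 0$; this simultaneously yields $E^*_i A E^*_j = 0$ for $i - j > 1$ and $E^*_{j+1} A E^*_j \neq 0$. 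The main obstacle is not any single computation but the bookkeeping: parts~(i) and~(ii) use opposite flags (anchored at $e_d$ and at $e_0$ respectively), because the diagonal entries $\theta_{d-k}$ of $A$ and $\theta^*_k$ of $A^*$ are indexed in opposite directions; once the correct filtrations are set up, everything reduces to the explicit bidiagonal action of the matrices.
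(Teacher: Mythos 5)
Your proposal is correct. Note, though, that the paper does not prove this lemma from scratch at all: its ``proof'' is a one-line citation of Proposition 7.6 and Lemma 7.7 of \cite{2020TerNote} (with the index reversal $E_i\leftrightarrow E_{d-i}$ flagged just above the lemma), whereas you replace the citation with a self-contained matrix computation. Your two flags, $E_0V+\cdots+E_jV=\mathrm{span}\{e_{d-j},\ldots,e_d\}$ and $E^*_0V+\cdots+E^*_jV=\mathrm{span}\{e_0,\ldots,e_j\}$, are exactly the partial sums of the split decomposition $\{U_i\}_{i=0}^d$ of Section \ref{Sec:Phi-split seq} specialized to the situation where the standard basis of $\mathbb{F}^{d+1}$ is a split basis for the pair of Definition \ref{def:A A*}, so in spirit your argument is the standard split-decomposition technique that underlies the cited results; the individual steps check out (the bidiagonal recurrences do force the eigenvectors $f_j$, $g_j$ to have the stated supports with nonzero leading coefficients, the containments $A^*V_j\subseteq V_{j+1}$ and $AV^*_j\subseteq V^*_{j+1}$ give the vanishing for $i-j>1$, and the leading coefficients $c^{(j)}_{d-j}\phi_{d-j}$ and $g^{(j)}_j$, nonzero by Notation \ref{notation:scalars}, give the nonvanishing on the subdiagonal via the quotient $V_{j+1}/V_j\cong E_{j+1}V$). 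What your route buys is a proof readable without consulting \cite{2020TerNote}, together with explicit eigenvector formulas that make the nonzero entries visible; what the paper's route buys is brevity and uniformity, since the genuinely harder statements of Section \ref{Sec:pf Thm(TD=>CHP)} (Lemma \ref{lem: vanishing prod terms(2)}, concerning the circular entries $E_0A^*E_d$ and the entries with $1<j-i<d$, which need the recurrence hypotheses) are handled by the same reference and are not addressed by your filtration argument -- nor do they need to be for the present lemma.
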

\begin{proof}
(i): Use \cite[Proposition 7.6]{2020TerNote}, \cite[Lemma 7.7]{2020TerNote}.\\ 
(ii): By \cite[Proposition 7.6]{2020TerNote}.
\end{proof}

\begin{lemma}\label{E0 normalizing}
For $0\leq i \leq d$, both $E_iE^*_0 \ne 0$ and $E^*_iE_0 \ne 0$.
\end{lemma}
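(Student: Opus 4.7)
The plan is to exploit the explicit matrix forms of $A$ and $A^*$ given in Definition \ref{def:A A*}. Working in the standard basis $\{e_j\}_{j=0}^d$ of $\mathbb{F}^{d+1}$, the first column of $A^*$ yields $A^* e_0 = \theta^*_0 e_0$, and the last column of $A$ yields $A e_d = \theta_0 e_d$. Since the diagonal entries of each matrix are pairwise distinct by Notation \ref{notation:scalars}(i), each of $A$ and $A^*$ is multiplicity-free with one-dimensional eigenspaces, so $E^*_0 V = \mathbb{F} e_0$ and $E_0 V = \mathbb{F} e_d$. Consequently the two claims reduce to $E_i e_0 \ne 0$ and $E^*_i e_d \ne 0$ for all $0 \le i \le d$.

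The next step is to observe that $e_0$ is cyclic for $A$ and that $e_d$ is cyclic for $A^*$. Reading off the matrix of $A$ gives $(A - \theta_{d-k} I) e_k = e_{k+1}$ for $0 \le k \le d-1$, so iteratively $e_j \in \operatorname{span}\{e_0, A e_0, \ldots, A^j e_0\}$ for each $j$, and therefore $\{A^k e_0\}_{k=0}^d$ spans $V$. The same reading of $A^*$, using that its superdiagonal entries $\phi_1, \ldots, \phi_d$ are nonzero by Notation \ref{notation:scalars}(ii), shows that $(A^* - \theta^*_{d-k+1} I) e_{d-k+1} = \phi_{d-k+1} e_{d-k}$ and hence $\{A^{*k} e_d\}_{k=0}^d$ spans $V$.

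To finish, I would decompose $e_0 = \sum_{i=0}^d E_i e_0$ along the eigenspaces of $A$. Were $E_i e_0 = 0$ for some $i$, then $A^k e_0 = \sum_j \theta_j^k E_j e_0$ would lie in $\sum_{j \ne i} E_j V$ for every $k \ge 0$, contradicting the cyclicity just established. Hence $E_i e_0 \ne 0$ for every $i$, which gives $E_i E^*_0 \ne 0$. The identical argument applied to $A^*$ in place of $A$ and to $e_d$ in place of $e_0$ yields $E^*_i e_d \ne 0$ and hence $E^*_i E_0 \ne 0$. No genuine obstacle is expected: the proof is a direct inspection of the split-matrix forms combined with the standard cyclic-vector principle, and all the ingredients are already packaged into Notation \ref{notation:scalars} and Definition \ref{def:A A*}.
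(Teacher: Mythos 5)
Your proof is correct, and it takes a genuinely different route from the paper. The paper disposes of Lemma \ref{E0 normalizing} by citation: it combines Lemma \ref{lem: vanishing prod terms(1)}(ii) (the Hessenberg shape of $E^*_iAE^*_j$) with Lemma 7.5 of \cite{2020TerNote}, so the nonvanishing of $E_iE^*_0$ and $E^*_iE_0$ is extracted from general machinery about Hessenberg-shaped actions rather than computed. You instead argue directly from the explicit bidiagonal matrices of Definition \ref{def:A A*}: you correctly identify $E^*_0V=\mathbb{F}e_0$ and $E_0V=\mathbb{F}e_d$ from the first column of $A^*$ and last column of $A$, reduce the claims to $E_ie_0\ne 0$ and $E^*_ie_d\ne 0$, show that $e_0$ is cyclic for $A$ (subdiagonal entries $1$) and $e_d$ is cyclic for $A^*$ (superdiagonal entries $\phi_j\ne 0$, Notation \ref{notation:scalars}(ii)), and finish with the standard observation that a cyclic vector cannot have a vanishing component in any eigenspace, since $\sum_{j\ne i}E_jV$ is a proper $A$-invariant subspace. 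Each step checks out. What the two approaches buy: yours is elementary and self-contained, making the lemma independent of \cite{2020TerNote}; the paper's is shorter and fits its overall strategy in Section \ref{Sec:pf Thm(TD=>CHP)} of outsourcing such facts to the cited notes, and it emphasizes that only the Hessenberg shape of the actions is needed, not the particular split-form entries.
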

\begin{proof}
By Lemma \ref{lem: vanishing prod terms(1)}(ii) and \cite[Lemma 7.5]{2020TerNote}.
\end{proof}

\begin{lemma}
Both
\begin{align}
	\sum^{d}_{i=2} E_0A^*E_iE^*_0(\theta_i-\theta_1) & = E_0E^*_0(\vartheta_1-\vartheta_d),\label{wraparound(1)} \\
	\sum^{d}_{i=2} E^*_0AE^*_iE_0(\theta^*_1-\theta^*_i) & = E^*_0E_0(\vartheta_1-\vartheta_d). \label{wraparound(2)}
\end{align}
\end{lemma}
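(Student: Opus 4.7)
The plan is to prove both identities by the same strategy: collapse the sum on the left using resolutions of unity, then evaluate both sides as rank-one operators in the standard basis $\{e_i\}_{i=0}^d$ of $\mathbb{F}^{d+1}$ in which $A,A^*$ take the bidiagonal forms of Definition \ref{def:A A*}. The key fact used throughout is that $E_0V=\mathrm{span}(e_d)$ and $E^*_0V=\mathrm{span}(e_0)$, which is immediate from $Ae_d=\theta_0e_d$ and $A^*e_0=\theta^*_0e_0$.

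For \eqref{wraparound(1)}, the identities $\sum_{i=0}^dE_i=I$ and $\sum_{i=0}^d\theta_iE_i=A$ give
\[
\sum_{i=2}^dE_i(\theta_i-\theta_1)=A-\theta_1I-(\theta_0-\theta_1)E_0,
\]
after noting that the $i=1$ contribution vanishes. Sandwiching between $E_0A^*$ and $E^*_0$ rewrites the left side of \eqref{wraparound(1)} as
\[
E_0A^*(A-\theta_1I)E^*_0-(\theta_0-\theta_1)E_0A^*E_0E^*_0.
\]
Both sides of \eqref{wraparound(1)} are rank-$\le 1$ with range in $E_0V$ and kernel containing $\ker E^*_0$, so the identity holds if and only if both sides agree on $e_0$. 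Using the bidiagonal formulas in Definition \ref{def:A A*} one computes $A^*(A-\theta_1I)e_0=\bigl((\theta_d-\theta_1)\theta^*_0+\phi_1\bigr)e_0+\theta^*_1e_1$ and $A^*E_0e_0=\alpha_0(\theta^*_de_d+\phi_de_{d-1})$, where the scalars $\alpha_k$ are defined by $E_0e_k=\alpha_ke_d$. These $\alpha_k$ are the components of the left eigenvector of $A$ for $\theta_0$ and satisfy $\alpha_1/\alpha_0=\theta_0-\theta_d$ and $(\theta_0-\theta_1)\alpha_{d-1}/\alpha_0=1$, found by a short recurrence from the transpose $A^{T}$. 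Substituting and simplifying yields the scalar coefficient of $E_0e_0$:
\[
\phi_1-\phi_d+\theta^*_0(\theta_d-\theta_1)+\theta^*_1(\theta_0-\theta_d)+\theta^*_d(\theta_1-\theta_0),
\]
which is exactly $\vartheta_1-\vartheta_d$ by \eqref{vartheta}.

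For \eqref{wraparound(2)} the argument is entirely dual. Rewriting $\sum_{i=2}^dE^*_i(\theta^*_1-\theta^*_i)=\theta^*_1I-A^*-(\theta^*_1-\theta^*_0)E^*_0$ and sandwiching between $E^*_0A$ and $E_0$ reduces the identity to an equality of two rank-$\le 1$ operators with range in $E^*_0V$ and kernel containing $\ker E_0$, which is checked by evaluating on $e_d$. Now the auxiliary scalars are $\alpha^*_k$ defined by $E^*_0e_k=\alpha^*_ke_0$, coming from the left eigenvector of $A^*$ for $\theta^*_0$; the recurrence gives $\alpha^*_k=\prod_{j=1}^k\phi_j/(\theta^*_0-\theta^*_j)$, so in particular $\alpha^*_1=\phi_1/(\theta^*_0-\theta^*_1)$ and $\alpha^*_{d-1}/\alpha^*_d=(\theta^*_0-\theta^*_d)/\phi_d$. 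The analogous substitution produces the same expression $\vartheta_1-\vartheta_d$, as required.

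The main obstacle is purely bookkeeping: there is no new structural input, but one has to track a handful of signs and products of the scalars $\alpha_k,\alpha^*_k$. What makes the final combination collapse to $\vartheta_1-\vartheta_d$ is the distinguished role of $\theta_0,\theta_1,\theta_d$ and $\theta^*_0,\theta^*_1,\theta^*_d$ arising at the two corners of the bidiagonal forms, matching precisely the indices appearing in the definition \eqref{vartheta} of $\vartheta_1$ and $\vartheta_d$.
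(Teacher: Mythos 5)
Your proof is correct, but it is a genuinely different route from the paper's: the paper disposes of this lemma by citing \cite[Proposition 8.4, Lemma 8.5]{2020TerNote}, whereas you give a self-contained verification using only the explicit bidiagonal matrices of Definition \ref{def:A A*}. Your strategy --- collapse $\sum_{i=2}^d E_i(\theta_i-\theta_1)$ to $A-\theta_1 I-(\theta_0-\theta_1)E_0$ via the spectral decomposition, observe that both sides of \eqref{wraparound(1)} annihilate $\ker E^*_0$ so that it suffices to evaluate at $e_0$, and then track the left-eigenvector components $\alpha_k$ (with $E_0e_k=\alpha_k e_d$) --- checks out: the recurrence $\alpha_{j+1}=(\theta_0-\theta_{d-j})\alpha_j$ together with $\alpha_d=1$ makes the coefficient of $e_d$ collapse to $\alpha_0\bigl(\phi_1-\phi_d+\theta^*_0(\theta_d-\theta_1)+\theta^*_1(\theta_0-\theta_d)+\theta^*_d(\theta_1-\theta_0)\bigr)$, which is $\alpha_0(\vartheta_1-\vartheta_d)$ by \eqref{vartheta}, and the dual computation for \eqref{wraparound(2)} with $\alpha^*_k=\prod_{j=1}^k\phi_j/(\theta^*_0-\theta^*_j)$ works the same way. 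What your argument buys is transparency and independence from the Leonard-system machinery (it also makes visible that no recurrence hypothesis is needed here); what the paper's citation buys is brevity and a uniform source for the neighboring Lemmas \ref{lem: vanishing prod terms(1)} and \ref{lem: vanishing prod terms(2)}.

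One slip to fix: you state $(\theta_0-\theta_1)\alpha_{d-1}/\alpha_0=1$, but since $E_0e_d=e_d$ forces $\alpha_d=1$, the recurrence gives $(\theta_0-\theta_1)\alpha_{d-1}=\alpha_d=1$, i.e.\ the ratio should be against $\alpha_d$, not $\alpha_0$ (indeed $\alpha_0=\prod_{k=1}^d(\theta_0-\theta_k)^{-1}\neq 1$ in general). Your final displayed coefficient is the one produced by the correct relation, so this is a typographical error rather than a gap, but as written that line is false and should be corrected.
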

\begin{proof}
By \cite[Proposition 8.4]{2020TerNote} and \cite[Lemma 8.5]{2020TerNote}.
\end{proof}

\begin{lemma}\label{lem: vanishing prod terms(2)}
Assume that there exists $\beta \in \mathbb{F}$ such that each of $\{\theta_i\}^d_{i=0}$, $\{\theta^*_i\}^d_{i=0}$, $\{\vartheta_i\}^{d+1}_{i=0}$ is $\beta$-recurrent.
Then the following {\rm(i)--(iv)} hold. 
\begin{enumerate}[\normalfont(i)]
	\item $E_iA^*E_j=0$ \quad if \quad $1<j-i<d$ \quad $(0\leq i, j \leq d)$.
	\item $E^*_iAE^*_j=0$ \quad if \quad $1<j-i<d$ \quad  $(0\leq i, j \leq d)$.
	\item $E_0A^*E_d\ne0$ \quad if and only if \quad $\vartheta_1 \ne \vartheta_d$.
	\item $E^*_0AE^*_d \ne 0$ \quad if and only if \quad $\vartheta_1 \ne \vartheta_d$.
\end{enumerate}
\end{lemma}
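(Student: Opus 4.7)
The plan is to combine Lemma \ref{lem:TD-beta rec} (which converts the three $\beta$-recurrences into the tridiagonal relations \eqref{TD1} and \eqref{TD2}), a standard sandwiching trick with primitive idempotents, and the wraparound identities \eqref{wraparound(1)} and \eqref{wraparound(2)}. After choosing $\gamma, \varrho$ as in Lemma \ref{lem:TD-beta rec} and sandwiching \eqref{TD1} between $E_i$ on the left and $E_j$ on the right, the commutator collapses (via $AE_k = \theta_k E_k$) to
\[
(\theta_i - \theta_j)\, p(\theta_i, \theta_j)\, E_i A^* E_j = 0, \qquad p(x,y) := x^2 - \beta x y + y^2 - \gamma(x+y) - \varrho.
\]
Distinctness of the $\theta_k$ then gives $p(\theta_i, \theta_j) E_i A^* E_j = 0$ for $i \ne j$, and Lemma \ref{lem: vanishing prod terms(1)}(i) together with the symmetry of $p$ forces $p$ to vanish on all pairs $(\theta_k, \theta_\ell)$ with $|k - \ell| = 1$. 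Consequently, for each fixed $k$ with $1 \le k \le d-1$, the two roots of the monic quadratic $y \mapsto p(\theta_k, y)$ are exactly $\theta_{k-1}$ and $\theta_{k+1}$, and likewise for $x \mapsto p(x, \theta_k)$.

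To establish (i), I would split on $j$ for a pair $(i,j)$ with $1 < j - i < d$. If $2 \le j \le d - 1$, I apply the first observation at index $j$: since $i \le j - 2$, the value $\theta_i$ is not a root of $p(\cdot, \theta_j)$, so $p(\theta_i, \theta_j) \ne 0$ and hence $E_i A^* E_j = 0$. If $j = d$, then $1 \le i \le d - 2$, and I instead examine the quadratic $p(\theta_i, \cdot)$: both its roots $\theta_{i-1}$ and $\theta_{i+1}$ have indices at most $d - 1$, so $\theta_d$ is not a root, giving $p(\theta_i, \theta_d) \ne 0$ and $E_i A^* E_d = 0$.

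For (iii), part (i) annihilates every middle term of \eqref{wraparound(1)} and reduces it to $(\theta_d - \theta_1)\, E_0 A^* E_d E^*_0 = (\vartheta_1 - \vartheta_d)\, E_0 E^*_0$. Since $E_0 E^*_0 \ne 0$ (Lemma \ref{E0 normalizing}) and $\theta_d \ne \theta_1$, the nonvanishing of $E_0 A^* E_d E^*_0$ is controlled exactly by whether $\vartheta_1 \ne \vartheta_d$. To pass from $E_0 A^* E_d E^*_0$ to $E_0 A^* E_d$ itself, I would fix $0 \ne u^* \in E^*_0 V$ and note that $E_d u^* = E_d E^*_0 u^*$ is nonzero by Lemma \ref{E0 normalizing}, hence spans the one-dimensional space $E_d V$; this makes the two vanishings equivalent. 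Parts (ii) and (iv) follow by the dual version of the entire argument: sandwich \eqref{TD2} between $E^*_i$ and $E^*_j$ to produce the analogous polynomial in the $\theta^*$'s, and invoke \eqref{wraparound(2)} in place of \eqref{wraparound(1)}, using $E^*_d E_0 \ne 0$ to lift at the end.

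The main obstacle is the boundary case $j = d$ in (i). For interior $j$ the two-roots-of-a-quadratic argument against $p(\cdot, \theta_j)$ is immediate, because both roots $\theta_{j \pm 1}$ lie inside the sequence; but at $j = d$ only $\theta_{d-1}$ is known a priori to be a root of $p(\cdot, \theta_d)$, and the second root could in principle coincide with some $\theta_k$ for $k \le d - 2$. Swapping which variable of $p$ is held fixed---looking at $p(\theta_i, \cdot)$ instead of $p(\cdot, \theta_d)$---resolves this, and the range $1 \le i \le d - 2$ is exactly what is needed for the known roots $\theta_{i \pm 1}$ to exclude $\theta_d$. The pair $(0, d)$, which is precisely the pair excluded by the strict inequality $j - i < d$, is the one genuinely ambiguous pair, and its behavior is the content of (iii).
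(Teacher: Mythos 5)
Your argument is correct, and for parts (iii), (iv) it is essentially the paper's proof: reduce the wraparound identities \eqref{wraparound(1)}, \eqref{wraparound(2)} using (i), (ii), use $\theta_1\ne\theta_d$, $\theta^*_1\ne\theta^*_d$ and Lemma \ref{E0 normalizing}, and then lift from $E_0A^*E_dE^*_0$ to $E_0A^*E_d$ (the paper cites \cite[Proposition 6.4]{2020TerNote} for this last step; your one-dimensionality argument via $E_du^*$ spanning $E_dV$ is a correct self-contained substitute). For parts (i), (ii) the paper simply refers to a slight modification of \cite[Section 17]{2020TerNote}, whereas you supply an explicit argument: invoke Lemma \ref{lem:TD-beta rec} to get the tridiagonal relations \eqref{TD1}, \eqref{TD2}, sandwich between primitive idempotents to obtain $(\theta_i-\theta_j)p(\theta_i,\theta_j)E_iA^*E_j=0$, and use Lemma \ref{lem: vanishing prod terms(1)} plus the symmetry of $p$ to pin down the roots of the monic quadratics $p(\theta_k,\cdot)$ for $1\le k\le d-1$. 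This is the standard mechanism behind the cited result, so the route is the same in spirit, but your version has the advantage of being self-contained within the paper; in particular your treatment of the boundary case $j=d$ (switching to the quadratic $p(\theta_i,\cdot)$ with $1\le i\le d-2$, since only one root of $p(\cdot,\theta_d)$ is known a priori) is exactly the point where care is needed, and you handle it correctly, leaving only the genuinely exceptional pair $(0,d)$ to be governed by (iii).
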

\begin{proof}
(i): By a slight modification of the proof of line (60) in \cite[Section 17]{2020TerNote}.\\
(ii):  Similar to the proof of (i).\\
(iii): Recall the equation \eqref{wraparound(1)}.
By (i), we have $E_0A^*E_j=0$ for $1< j < d$.
Applying this to the equation \eqref{wraparound(1)}, we have the equation $E_0A^*E_dE^*_0(\theta_d-\theta_1)=E_0E^*_0(\vartheta_1-\vartheta_d)$.
Note that $\theta_d \ne \theta_1$. 
Also, $E_0E^*_0 \ne 0$ by Lemma \ref{E0 normalizing}.
Therefore, $\vartheta_1\ne \vartheta_d$ if and only if $E_0A^*E_dE^*_0\ne0$.
By Lemma \ref{E0 normalizing} and \cite[Proposition 6.4]{2020TerNote} with $X=E_0A^*$ and $i=d$, $E_0A^*E_dE^*_0\ne0$ if and only if $E_0A^*E_d\ne0$.\\
(iv): Similar to the proof of (iii).
\end{proof}

We are now ready to prove Theorem \ref{mainthm}.

\begin{proof}[Proof of Theorem \ref{mainthm}]
To prove that the sequence $\Phi=(A; \{E_i\}^d_{i=0}; A^*; \{E^*_i\}^d_{i=0})$ is a CH system, we show that $\Phi$ satisfies the conditions (i)--(v) of Definition \ref{Def:CHS}.
By construction, the conditions (i)--(iii) hold.
The condition (iv) follows from Lemma \ref{lem: vanishing prod terms(1)}(i) and  Lemma \ref{lem: vanishing prod terms(2)}(i),(iii). 
The condition (v) follows from Lemma \ref{lem: vanishing prod terms(1)}(ii) and  Lemma \ref{lem: vanishing prod terms(2)}(ii),(iv).
The result follows.
\end{proof}

\section{Four families of CH systems}\label{sec:4 families}

In this section, we classify up to isomorphism the recurrent CH systems.
To do this, we first display four families of recurrent CH systems.
We then show that every recurrent CH system is isomorphic to a member of one of the four families.

For the rest of this section, let $\{\theta_i\}^d_{i=0}$, $\{\theta^*_i\}^d_{i=0}$, $\{\phi_i\}^d_{i=1}$ denote scalars in $\mathbb{F}$.
Let $\overline{\mathbb{F}}$ denote the algebraic closure of $\mathbb{F}$.

\begin{example}\label{ex:beta not 2, -2}
Fix $0\ne q\in \overline{\mathbb{F}}$ such that $q+q^{-1} \in \mathbb{F}$.
Assume that $q^i \ne 1$ for $1\leq i \leq d$ and $q^{d+1}=1$.
Notice that $q\ne \pm1$ since $d\geq 3$.
Set $\beta=q+q^{-1}$.
Observe that $\beta\ne \pm2$.
Let $a,b,c,a^*,b^*,c^*$ be scalars taken from $\overline{\mathbb{F}}$.
Assume
\begin{align}
	\theta_i & = a + bq^i + cq^{-i}, \label{eq:theta(i)}\\
	\theta^*_i & = a^* + b^*q^i + c^*q^{-i} \label{eq:theta*(i)}
\end{align}
for $0\leq  i \leq d$.
Assume that $c\ne bq^i$ and $c^*\ne b^*q^i$ for $1 \leq i \leq 2d-1$.
By these assumptions, the scalars $\{\theta_i\}^d_{i=0}$ are mutually distinct and $\beta$-recurrent.
Similarly, the scalars $\{\theta^*_i\}^d_{i=0}$ are mutually distinct and $\beta$-recurrent.
Pick distinct $y,z\in \overline{\mathbb{F}}$.
Assume
\begin{equation}\label{eq:phi(i)}
	\phi_i  = (q^i-1)(y-zq^{-i}) + (q^i-1)(q^{-i}-1)(b-cq^{i})(b^*-c^*q^{-i}) 
\end{equation}
for $1\leq i \leq d$.
Assume that $y, z$ are chosen so that $\phi_i\ne 0$ for $1\leq i \leq d$.
From \eqref{eq:theta(i)}--\eqref{eq:phi(i)}, the scalars $\{\vartheta_i\}^{d+1}_{i=0}$ in \eqref{vartheta} satisfy
\begin{equation}\label{eq:varth fm case1}
	\vartheta_i = (q^i-1)(y-zq^{-i})
\end{equation}
for $1\leq i \leq d$ along with $\vartheta_0=0$ and $\vartheta_{d+1}=0$.
Observe that the sequence $\{\vartheta_i\}^{d+1}_{i=0}$ is $\beta$-recurrent.
Also, observe that $\vartheta_1-\vartheta_d=q(1-q^{d-1})(y-z)$, so $\vartheta_1 \ne \vartheta_d$.
We have shown that the scalars $\{\theta_i\}^d_{i=0}$, $\{\theta^*_i\}^d_{i=0}$, $\{\vartheta_i\}^{d+1}_{i=0}$ satisfy the conditions of Theorem \ref{mainthm}.
By this theorem, the sequence 
\begin{equation}\label{CHS_beta ne pm2,-2}
	\Phi=(A; \{E_i\}^d_{i=0}; A^*; \{E^*_i\}^d_{i=0})
\end{equation}
from Definition \ref{def:A A*} forms a CH system on $\mathbb{F}^{d+1}$. 
By construction, $\Phi$ is $\beta$-recurrent.
\end{example}

\begin{example}\label{ex:beta = 2}
Assume that $\operatorname{Char}(\mathbb{F})=d+1$.
Set $\beta=2$.
Let $a,b,c,a^*,b^*,c^*$ be scalars taken from $\mathbb{F}$.
Assume
\begin{align}
	 \theta_i & = a + bi + ci(i-1)/2, \label{eq:th cl form, be=2}\\
	\theta^*_i & = a^* + b^*i + c^*i(i-1)/2 \label{eq:th* cl form, be=2}
\end{align}
for $0\leq i \leq d$.
Assume that $2b \ne c(1-i)$ and $2b^* \ne c^*(1-i)$ for $1 \leq i \leq 2d-1$.
By these assumptions, the scalars $\{\theta_i\}^d_{i=0}$ are mutually distinct and $\beta$-recurrent.
Similarly, the scalars $\{\theta^*_i\}^d_{i=0}$ are mutually distinct and $\beta$-recurrent.
Pick $y,z \in \mathbb{F}$ such that $2y\ne z$. 
Assume 
\begin{equation}\label{eq:phi cl form, be=2}
	\phi_i  = i\big(y+z(i-1)/2\big) - i^2\big(b+c(d-i)/2\big)\big(b^*+c^*(i-1)/2\big)
\end{equation}
for $1 \leq i \leq d$.
Assume that $y,z$ are chosen so that $\phi_i\ne 0$ for $1\leq i \leq d$.
From \eqref{eq:th cl form, be=2}--\eqref{eq:phi cl form, be=2}, the scalars $\{\vartheta_i\}^{d+1}_{i=0}$ in \eqref{vartheta} satisfy
\begin{equation}\label{eq:varth cl form, be=2}
	\vartheta_i = yi + zi(i-1)/2
\end{equation}
for $1\leq i \leq d$ along with $\vartheta_0=0$ and $\vartheta_{d+1}=0$.
Observe that the sequence $\{\vartheta_i\}^{d+1}_{i=0}$ is $\beta$-recurrent.
Also, observe that $\vartheta_1-\vartheta_d=2y-z$, so $\vartheta_1 \ne \vartheta_d$.
We have shown that the scalars $\{\theta_i\}^d_{i=0}$, $\{\theta^*_i\}^d_{i=0}$, $\{\vartheta_i\}^{d+1}_{i=0}$ satisfy the conditions of Theorem \ref{mainthm}.
By this theorem, the sequence 
\begin{equation}\label{CHS_beta=2}
	\Phi=(A; \{E_i\}^d_{i=0}; A^*; \{E^*_i\}^d_{i=0})
\end{equation}
from Definition \ref{def:A A*} forms a CH system on $\mathbb{F}^{d+1}$. 
By construction, $\Phi$ is $\beta$-recurrent.
\end{example}

\begin{example}\label{ex:beta = -2}
Assume that  $d$ is odd and $d\geq 5$ and $\operatorname{Char}(\mathbb{F})=(d+1)/2$.
Set $\beta=-2$.
Let $a,b,c,a^*,b^*,c^*$ be scalars taken from $\mathbb{F}$.
Assume
\begin{align}
	\theta_i & = a + b(-1)^i + c{i}(-1)^i, \label{eq:th cl form, be=-2}\\
	\theta^*_i & = a^* + b^*(-1)^i + c^*{i}(-1)^i \label{eq:th* cl form, be=-2}
\end{align}
for $0\leq i \leq d$.
Assume that none of $b, b^*, c, c^*$ is zero. Also assume that $2b\ne -ic$ and $2b^*\ne -ic^*$ for $1\leq i \leq 2d-1$ with $i$ odd.
By these assumptions, the scalars $\{\theta_i\}^d_{i=0}$ are mutually distinct and $\beta$-recurrent.
Similarly, the scalars $\{\theta^*_i\}^d_{i=0}$ are mutually distinct and $\beta$-recurrent.
Pick $y,z \in \mathbb{F}$ such that $z\ne 0$.
Assume 
\begin{equation}\label{eq:phi cl form, be=-2}
	\phi_i  =  y\Big((-1)^i-1\Big)+zi(-1)^{i} 
		+ \Big(b\big((-1)^i-1\big)-ci(-1)^i\Big)\Big(b^*\big((-1)^i-1\big)+c^*i(-1)^i\Big)
\end{equation}
for $1\leq i \leq d$.
Assume that $y,z$ are chosen so that $\phi_i\ne 0$ for $1\leq i \leq d$.
From \eqref{eq:th cl form, be=-2}--\eqref{eq:phi cl form, be=-2}, the scalars $\{\vartheta_i\}^{d+1}_{i=0}$ in \eqref{vartheta} satisfy
\begin{equation}\label{eq:varth cl form, be=-2}
	\vartheta_i = y\big((-1)^i-1\big) + zi(-1)^i
\end{equation}
for $1\leq i \leq d$ along with $\vartheta_0=0$ and $\vartheta_{d+1}=0$.
Observe that the sequence $\{\vartheta_i\}^{d+1}_{i=0}$ is $\beta$-recurrent.
Also, observe that $\vartheta_1-\vartheta_d=z(d-1)$, so $\vartheta_1 \ne \vartheta_d$.
We have shown that the scalars $\{\theta_i\}^d_{i=0}$, $\{\theta^*_i\}^d_{i=0}$, $\{\vartheta_i\}^{d+1}_{i=0}$ satisfy the conditions of Theorem \ref{mainthm}.
By this theorem, the sequence 
\begin{equation}\label{CHS_beta=-2}
	\Phi=(A; \{E_i\}^d_{i=0}; A^*; \{E^*_i\}^d_{i=0})
\end{equation}
from Definition \ref{def:A A*} forms a CH system on $\mathbb{F}^{d+1}$. 
By construction, $\Phi$ is $\beta$-recurrent.
\end{example}

\begin{example}\label{ex:beta = 0}
Assume that $d=3$ and $\operatorname{Char}(\mathbb{F})=2$.
Set $\beta=0$.
Let $a,b,c,a^*,b^*,c^*$ be scalars taken from $\mathbb{F}$.
Assume
\begin{align}
	 \theta_i & = a+ bi + c\binom{i}{2}, \label{eq:th cl form, be=0}\\ 
	\theta^*_i & = a^*+ b^*i + c^*\binom{i}{2} \label{eq:th* cl form, be=0}
\end{align}
for $0\leq i \leq 3$ and where we interpret the binomial coefficient as follows:
	\begin{equation}\label{eq:bin char=2}
	\binom{n}{2} = \begin{cases}
	0 &  \text{if } \quad n=0 \text{ or } n=1 \quad (\mathrm{mod}\ 4),\\
	1 & \text{if } \quad  n=2 \text{ or } n=3 \quad (\mathrm{mod}\ 4).	
	\end{cases}
	\end{equation}
Assume that none of $b, b^*, c, c^*$ is zero. Also assume that $b \ne c$ and $b^* \ne c^*$.
By these assumptions, the scalars $\{\theta_i\}^3_{i=0}$ are mutually distinct and $\beta$-recurrent.
Similarly, the scalars $\{\theta^*_i\}^3_{i=0}$ are mutually distinct and $\beta$-recurrent.
Pick $y,z \in \mathbb{F}$ such that $z\ne 0$.
Assume
\begin{equation}\label{eq:phi cl form, be=0}
	\phi_i  =  yi + z\binom{i}{2} + \left(bi+c\binom{i+1}{2}\right)\left(b^*i+c^*\binom{i}{2}\right)
\end{equation}
for $1 \leq i \leq 3$.
Assume that $y,z$ are chosen so that $\phi_i\ne 0$ for $1\leq i \leq 3$.
The scalars $\{\vartheta_i\}^{4}_{i=0}$ from \eqref{vartheta} satisfy
\begin{equation}\label{eq:varth cl form, be=0}
	\vartheta_i = yi + z\binom{i}{2}
\end{equation}
for $1\leq i \leq 3$ along with $\vartheta_0=0$ and $\vartheta_{4}=0$.
Observe that the sequence $\{\vartheta_i\}^{4}_{i=0}$ is $\beta$-recurrent.
Also, observe that $\vartheta_1-\vartheta_3=z$, so $\vartheta_1 \ne \vartheta_3$.
We have shown that the scalars $\{\theta_i\}^3_{i=0}$, $\{\theta^*_i\}^3_{i=0}$, $\{\vartheta_i\}^{4}_{i=0}$ satisfy the conditions of Theorem \ref{mainthm}.
By this theorem, the sequence 
\begin{equation}\label{CHS_beta=0}
	\Phi=(A; \{E_i\}^3_{i=0}; A^*; \{E^*_i\}^3_{i=0})
\end{equation}
from Definition \ref{def:A A*} forms a CH system on $\mathbb{F}^{4}$. 
By construction, $\Phi$ is $\beta$-recurrent.
\end{example}

We have displayed four families of recurrent CH systems. 
Our next goal is to show that every recurrent CH system over $\mathbb{F}$ is isomorphic to a member of one of the four families. 
In order to obtain this result, we need a lemma.

\begin{lemma}\label{lem:vth1=vthd}
Let $\Phi=(A; \{E_i\}^d_{i=0}; A^*; \{E^*_i\}^d_{i=0})$ denote a recurrent CH system over $\mathbb{F}$.
Let the sequence $(\{\theta_i\}^d_{i=0}, \{\theta^*_i\}^d_{i=0}, \{\phi_i\}^d_{i=1})$ denote the parameter array of $\Phi$. 
Recall the scalars $\{\vartheta_i\}^{d+1}_{i=0}$ from \eqref{vartheta}.
Then $\vartheta_1\ne\vartheta_d$.
\end{lemma}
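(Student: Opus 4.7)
The plan is to reduce the statement to Lemma \ref{lem: vanishing prod terms(2)}(iii), whose "if and only if" already gives the conclusion, once both of its hypotheses are verified. The two hypotheses are (a) that the $\beta$-recurrence conditions hold so that Lemma \ref{lem: vanishing prod terms(2)} is applicable, and (b) that $E_0 A^* E_d \ne 0$.

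First I would pass to a $\Phi$-split basis $\{v_i\}_{i=0}^d$ for $V$, as constructed in \eqref{phi-split basis}. By Proposition \ref{prop: CHpair mat split}, with respect to this basis the maps $A$ and $A^*$ are represented by precisely the matrices from Definition \ref{def:A A*}, built out of the parameter array $(\{\theta_i\}_{i=0}^d, \{\theta^*_i\}_{i=0}^d, \{\phi_i\}_{i=1}^d)$ of $\Phi$. Since $\Phi$ is assumed to be recurrent, by Definitions \ref{def beta-rec} and \ref{def recurrent} there exists $\beta \in \mathbb{F}$ such that each of the sequences $\{\theta_i\}_{i=0}^d$, $\{\theta^*_i\}_{i=0}^d$, and $\{\vartheta_i\}_{i=0}^{d+1}$ (from \eqref{vartheta}) is $\beta$-recurrent. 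Thus the hypothesis of Lemma \ref{lem: vanishing prod terms(2)} is met.

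Next I would establish hypothesis (b). Because $\Phi$ is a CH system, condition (iv) of Definition \ref{Def:CHS} applied to $i=0$, $j=d$ (which falls into the case $j-i=d$) yields $E_0 A^* E_d \ne 0$. Applying the "if and only if" in Lemma \ref{lem: vanishing prod terms(2)}(iii) then gives $\vartheta_1 \ne \vartheta_d$, which is the desired conclusion.

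I do not expect any serious obstacle; the whole point is that the recurrence hypothesis has been crafted exactly so that Lemma \ref{lem: vanishing prod terms(2)} applies, and the CH-system defining condition supplies the nonvanishing of the wraparound entry $E_0 A^* E_d$. If one preferred a slightly more self-contained argument, one could instead start from \eqref{wraparound(1)} and use Lemma \ref{lem: vanishing prod terms(2)}(i) (again under $\beta$-recurrence) to collapse the sum on the left to the single term $E_0 A^* E_d E_0^*(\theta_d - \theta_1)$; then $\vartheta_1 = \vartheta_d$ would force $E_0 A^* E_d E_0^* = 0$, contradicting $E_0 A^* E_d \ne 0$ together with Lemma \ref{E0 normalizing} and the standard fact (e.g.\ \cite[Proposition 6.4]{2020TerNote}) that $E_0 A^* E_d \ne 0$ is equivalent to $E_0 A^* E_d E^*_0 \ne 0$.
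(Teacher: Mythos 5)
Your proof is correct and is essentially the paper's own argument: recurrence supplies the $\beta$-recurrence hypotheses, Definition \ref{Def:CHS}(iv) at $(i,j)=(0,d)$ gives $E_0A^*E_d\ne 0$, and Lemma \ref{lem: vanishing prod terms(2)}(iii) then yields $\vartheta_1\ne\vartheta_d$. Your extra remark about passing to a $\Phi$-split basis (so that Lemma \ref{lem: vanishing prod terms(2)}, stated for the matrices of Definition \ref{def:A A*}, applies to $\Phi$) only makes explicit a step the paper leaves implicit.
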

\begin{proof}
Since $\Phi$ is recurrent, there exists $\beta \in \mathbb{F}$ such that each of $\{\theta_i\}^d_{i=0}$, $\{\theta^*_i\}^d_{i=0}$, $\{\vartheta_i\}^{d+1}_{i=0}$ is $\beta$-recurrent.
We have $E_0A^*E_d \ne 0$ by Definition \ref{Def:CHS}(iv).
The result follows from these comments and Lemma \ref{lem: vanishing prod terms(2)}(iii).
\end{proof}

\begin{theorem}\label{thm:four families}
Let $\Phi=(A; \{E_i\}^d_{i=0}; A^*; \{E^*_i\}^d_{i=0})$ denote a recurrent CH system over $\mathbb{F}$.
Then $\Phi$ is isomorphic to a member of one of the four families in Examples {\rm\ref{ex:beta not 2, -2}}--{\rm\ref{ex:beta = 0}}.
%Every recurrent CH system is isomorphic to a member of one of the four families in Examples {\rm\ref{ex:beta not 2, -2}}--{\rm\ref{ex:beta = 0}}.
\end{theorem}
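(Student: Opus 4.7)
The plan is to invoke Lemma \ref{iso_CHsystem}, which reduces the isomorphism classification to the statement that the parameter array $(\{\theta_i\}_{i=0}^d,\{\theta^*_i\}_{i=0}^d,\{\phi_i\}_{i=1}^d)$ of $\Phi$ coincides with the parameter array of some member of one of the four families from Examples \ref{ex:beta not 2, -2}--\ref{ex:beta = 0}. Since $\Phi$ is recurrent, fix $\beta \in \mathbb{F}$ such that each of $\{\theta_i\}_{i=0}^d$, $\{\theta^*_i\}_{i=0}^d$, $\{\vartheta_i\}_{i=0}^{d+1}$ is $\beta$-recurrent. The appendix provides closed-form parametrizations of $\beta$-recurrent sequences, and the two boundary conditions $\vartheta_0 = 0 = \vartheta_{d+1}$ together with the nondegeneracy $\vartheta_1 \ne \vartheta_d$ from Lemma \ref{lem:vth1=vthd} will pin down both which family we are in and the remaining free parameters.

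The case analysis is on $\beta$ and $\mathrm{Char}(\mathbb{F})$. When $\beta \ne \pm 2$, pick $q \in \overline{\mathbb{F}}$ with $\beta = q + q^{-1}$, so $q \ne \pm 1$, and write $\vartheta_i = A + Bq^i + Cq^{-i}$; applying the two boundary conditions and computing $\vartheta_1 - \vartheta_d$ directly shows that $q^{d+1} \ne 1$ would force $\vartheta_1 = \vartheta_d$, so $q^{d+1} = 1$. Distinctness of $\{\theta_i\}_{i=0}^d$ then forces $q^i \ne 1$ for $1 \le i \le d$, and one reads off the coefficients to match \eqref{eq:theta(i)}--\eqref{eq:phi(i)} of Example \ref{ex:beta not 2, -2}. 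When $\beta = 2$ and $\mathrm{Char}(\mathbb{F}) \ne 2$, the closed form $\vartheta_i = Bi + Ci(i-1)/2$ combined with $\vartheta_{d+1} = 0$ forces $d + 1 = 0$ in $\mathbb{F}$ (otherwise $B = -Cd/2$ and one computes $\vartheta_1 = \vartheta_d = -Cd/2$), and distinctness of $\{\theta_i\}_{i=0}^d$ strengthens this to $\mathrm{Char}(\mathbb{F}) = d+1$, matching Example \ref{ex:beta = 2}. The case $\beta = -2$, $\mathrm{Char}(\mathbb{F}) \ne 2$ is analogous with $\vartheta_i$ built from $(-1)^i$ and $i(-1)^i$; one shows $d$ is odd and $\mathrm{Char}(\mathbb{F}) = (d+1)/2$, forcing $d \ge 5$ since $\mathrm{Char}(\mathbb{F}) \ne 2$, matching Example \ref{ex:beta = -2}. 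In characteristic $2$ the effective value of $\beta$ reduces to $0$, and the relevant closed form uses the adjusted binomial coefficient \eqref{eq:bin char=2}; the boundary and nondegeneracy conditions force $d = 3$, giving Example \ref{ex:beta = 0}.

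The main obstacle is the case-by-case bookkeeping: in each case, one must verify that the closed forms for $\theta_i$, $\theta^*_i$, and $\vartheta_i$ together with the defining relation \eqref{vartheta} for $\phi_i$ can indeed be matched to the parameters $a, b, c, a^*, b^*, c^*, y, z$ appearing in the corresponding example. Ruling out extraneous possibilities, such as $\mathrm{Char}(\mathbb{F})$ strictly dividing $d+1$ without equaling it in the $\beta = 2$ case, or $\beta \ne 0$ in characteristic $2$, is delicate and requires simultaneous use of the distinctness of $\{\theta_i\}_{i=0}^d$ and $\{\theta^*_i\}_{i=0}^d$, the nonvanishing $\phi_i \ne 0$, and the nondegeneracy $\vartheta_1 \ne \vartheta_d$.
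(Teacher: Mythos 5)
Your proposal follows essentially the same route as the paper: reduce via Lemma \ref{iso_CHsystem} to matching parameter arrays, then split into the same four cases on $\beta$ and $\operatorname{Char}(\mathbb{F})$, using the closed forms of Lemma \ref{lem:rec seq q-form}, the boundary conditions $\vartheta_0=\vartheta_{d+1}=0$, the nondegeneracy $\vartheta_1\ne\vartheta_d$ from Lemma \ref{lem:vth1=vthd}, and eigenvalue distinctness to force $q^{d+1}=1$, $\operatorname{Char}(\mathbb{F})=d+1$, $d$ odd with $\operatorname{Char}(\mathbb{F})=(d+1)/2$ and $d\ge 5$, and $d=3$, respectively, before matching the resulting formulas for $\theta_i,\theta^*_i,\phi_i$ with Examples \ref{ex:beta not 2, -2}--\ref{ex:beta = 0}. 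One minor correction: in the characteristic-two case it is the distinctness of $\{\theta_i\}^d_{i=0}$ (via $\theta_4=\theta_0$ when $d\ge 4$) rather than the boundary/nondegeneracy conditions on $\vartheta$ that forces $d=3$, but this does not affect the argument.
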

\begin{proof}
Let the sequence $(\{\theta_i\}^d_{i=0}, \{\theta^*_i\}^d_{i=0}, \{\phi_i\}^d_{i=1})$ denote the parameter array of $\Phi$. 
Recall the scalars $\{\vartheta_i\}^{d+1}_{i=0}$ from \eqref{vartheta}.
Since $\Phi$ is recurrent, there exists $\beta \in \mathbb{F}$ such that each of $\{\theta_i\}^d_{i=0}$, $\{\theta^*_i\}^d_{i=0}$, $\{\vartheta_i\}^{d+1}_{i=0}$ is $\beta$-recurrent.
We have $\vartheta_1 \ne \vartheta_d$ by Lemma \ref{lem:vth1=vthd}.
Consider the $\Phi$-split sequence $\{\phi_i\}^d_{i=1}$.
By \eqref{vartheta},
\begin{equation}\label{eq:phi-vartheta}
	\phi_i = \vartheta_i + (\theta^*_i-\theta^*_0)(\theta_{d-i+1}-\theta_0)  \qquad (1\leq i \leq d).
\end{equation}
We show that $\Phi$ is isomorphic to a CH system from one of the Examples {\rm\ref{ex:beta not 2, -2}}--{\rm\ref{ex:beta = 0}}.
We divide the argument into the following four cases: (i) $\beta\ne \pm2$; (ii) $\beta=2$ and $\operatorname{Char}(\mathbb{F})\ne 2$; (iii) $\beta=-2$ and $\operatorname{Char}(\mathbb{F})\ne 2$; (iv) $\beta=0$ and $\operatorname{Char}(\mathbb{F}) = 2$.

\medskip \noindent 
\textbf{Case} (i): $\beta\ne \pm2$.

\noindent
Pick $0\ne q \in \overline{\mathbb{F}}$ such that $q+q^{-1}=\beta$.
By Lemma \ref{lem:rec seq q-form}(i) there exist scalars $a,b,c, a^*, b^*, c^* \in \overline{\mathbb{F}}$ such that 
\begin{align}
	\theta_i & = a + bq^i + cq^{-i},  \label{eq(1):2nd thm pf}\\
	\theta^*_i & = a^* + b^*q^i + c^*q^{-i} \label{eq(2):2nd thm pf}
\end{align}
for $0 \leq i \leq d$.
We claim that $q^{d+1}=1$  and $q^i\ne 1$ for $1\leq i \leq d$.
Since the sequence $\{\vartheta_i\}^{d+1}_{i=0}$ is $\beta$-recurrent, by Lemma \ref{lem:rec seq q-form}(i) there exist scalars $x, y,z \in \overline{\mathbb{F}}$ such that
\begin{equation}\label{eq:vartheta_i q-form}
	\vartheta_i = x + yq^i + zq^{-i}
\end{equation}
for $0\leq i \leq d+1$. 
Using $\vartheta_0=0$ and $\vartheta_{d+1}=0$, we have $x+y+z=0$ and
\begin{equation}\label{eq(1):beta ne pm 2}
	x+yq^{d+1}+zq^{-d-1}=0.
\end{equation}
Eliminate $x$ in \eqref{eq(1):beta ne pm 2} using $x=-y-z$ and evaluate the result to get
\begin{equation}
	(1-q^{d+1})(y-zq^{-d-1})=0.
\end{equation}
Suppose that $q^{d+1}\ne 1$.
Then $y=zq^{-d-1}$. 
Using this equation, eliminate $y$ in \eqref{eq:vartheta_i q-form} to obtain
\begin{equation}\label{eq(2):beta ne pm 2}
	\vartheta_i = x+zq^{i-d-1} + zq^{-i} = \vartheta_{d-i+1}
\end{equation}
for $0 \leq i \leq d+1$.
In particular, for $i=1$ we have $\vartheta_1=\vartheta_d$, a contradiction.
Therefore, $q^{d+1}=1$.
Next, we show that $q^i\ne 1$ for $1\leq i \leq d$.
Recall the eigenvalue sequence $\{\theta_i\}^d_{i=0}$ of $\Phi$.
If $q^i=1$ for some $1\leq i \leq d$, then $\theta_i=\theta_0$ by \eqref{eq(1):2nd thm pf}; this is a contradiction since the scalars $\{\theta_i\}^d_{i=0}$ are mutually distinct. 
Thus, $q^i \ne 1$ for $1\leq i \leq d$.
We have proved the claim.

Next, we discuss the scalars $\{\phi_i\}^d_{i=1}$.
Evaluate the right-hand side of \eqref{eq:phi-vartheta} using \eqref{eq(1):2nd thm pf}--\eqref{eq:vartheta_i q-form} to obtain
\begin{equation}\label{eq(3):2nd thm pf}
	\phi_i  = (q^i-1)(y-zq^{-i}) + (q^i-1)(q^{-i}-1)(b-cq^{i})(b^*-c^*q^{-i})
\end{equation}
for $1\leq i \leq d$.
Comparing \eqref{eq:theta(i)}, \eqref{eq:theta*(i)}, \eqref{eq:phi(i)} with \eqref{eq(1):2nd thm pf}, \eqref{eq(2):2nd thm pf}, \eqref{eq(3):2nd thm pf}, respectively, and using Lemma \ref{iso_CHsystem}, we find that $\Phi$ is isomorphic to the CH system shown in \eqref{CHS_beta ne pm2,-2} of Example \ref{ex:beta not 2, -2}.

\medskip \noindent 
\textbf{Case} (ii): $\beta=2$ and $\operatorname{Char}(\mathbb{F})\ne 2$.

\noindent
By Lemma \ref{lem:rec seq q-form}(ii) there exist scalars $a,b,c,a^*, b^*,c^* \in \mathbb{F}$ such that 
\begin{align}
	\theta_i & = a + bi + ci(i-1)/2, \label{eq:theta(i)beta=2}\\
	\theta^*_i & = a^* + b^*i + c^*i(i-1)/2 \label{eq:theta*(i)beta=2}
\end{align}
for $0\leq i \leq d$.
We claim that $\operatorname{Char}(\mathbb{F})=d+1$.
Since the sequence $\{\vartheta_i\}^{d+1}_{i=0}$ is $\beta$-recurrent, by Lemma \ref{lem:rec seq q-form}(ii) there exist scalars $x, y,z \in \mathbb{F}$ such that
\begin{equation}\label{eq:vartheta_i; b=2}
	\vartheta_i = x + y i + z{i}(i-1)/2
\end{equation}
for $0\leq i \leq d+1$.
Using $\vartheta_0=0$ and $\vartheta_{d+1}=0$, we have $x=0$ and
\begin{equation}
	(d+1)(y+zd/2)=0.
\end{equation}
Suppose that $d+1\ne 0$ in $\mathbb{F}$.
Then $y=-zd/2$. 
Using this equation, eliminate $y$ in \eqref{eq:vartheta_i; b=2} to obtain
\begin{equation}
	\vartheta_i =  \frac{(i-d-1)i}{2}z = \vartheta_{d-i+1}
\end{equation}
for $0 \leq i \leq d+1$.
In particular, for $i=1$ we have $\vartheta_1=\vartheta_d$, a contradiction.
Therefore, $d+1=0$ in $\mathbb{F}$.
Next, we show that $i \ne 0$ in $\mathbb{F}$ for $1\leq i \leq d$.
Let $i$ be given.
If $i=0$ in $\mathbb{F}$, then $\theta_i=\theta_0$ by \eqref{eq:theta(i)beta=2}; 
this is a contradiction since the scalars $\{\theta_i\}^d_{i=0}$ are mutually distinct. 
Thus, $i \ne 0$ in $\mathbb{F}$.
We have proved the claim.

Next, we discuss the scalars $\{\phi_i\}^d_{i=1}$.
Evaluate the right-hand side of \eqref{eq:phi-vartheta} using \eqref{eq:theta(i)beta=2}--\eqref{eq:vartheta_i; b=2} to obtain
\begin{equation}\label{eq:phi formula b=2}
	\phi_i  = i\big(y+z(i-1)/2\big) - i^2\big(b+c(d-i)/2\big)\big(b^*+c^*(i-1)/2\big)
\end{equation} 
for $1\leq i \leq d$.
Comparing \eqref{eq:th cl form, be=2}, \eqref{eq:th* cl form, be=2}, \eqref{eq:phi cl form, be=2} with \eqref{eq:theta(i)beta=2}, \eqref{eq:theta*(i)beta=2}, \eqref{eq:phi formula b=2}, respectively, and using Lemma \ref{iso_CHsystem}, we find that $\Phi$ is isomorphic to the CH system shown in \eqref{CHS_beta=2} of Example \ref{ex:beta = 2}.

\medskip \noindent 
\textbf{Case} (iii): $\beta=-2$ and $\operatorname{Char}(\mathbb{F})\ne 2$.

\noindent
By Lemma \ref{lem:rec seq q-form}(iii) there exist scalars $a,b,c,a^*, b^*,c^* \in \mathbb{F}$ such that 
\begin{align}
	\theta_i & = a + b(-1)^i + c{i}(-1)^i, \label{eq:theta(i)beta=-2}\\
	\theta^*_i & = a^* + b^*(-1)^i + c^*{i}(-1)^i  \label{eq:theta*(i)beta=-2}
\end{align}
for $0\leq i \leq d$.
We claim that $d$ is odd, $d\geq 5$, and $\operatorname{Char}(\mathbb{F})=(d+1)/2$.
We first show that $d$ is odd.
Since the sequence $\{\vartheta_i\}^{d+1}_{i=0}$ is $\beta$-recurrent, by Lemma \ref{lem:rec seq q-form}(iii) there exist scalars $x, y, z \in \mathbb{F}$ such that
\begin{equation}\label{eq:vartheta_i; b=-2}
	\vartheta_i = x + y(-1)^i + z{i}(-1)^i
\end{equation}
for $0\leq i \leq d+1$.
Since $\vartheta_0=0$ and $\vartheta_{d+1}=0$, we have $x+y=0$ and
\begin{equation}\label{eq(1):be=-2}
	x+y(-1)^{d+1}+z(d+1)(-1)^{d+1}=0.
\end{equation}
Eliminate $x$ in \eqref{eq(1):be=-2} using $x=-y$ and evaluate the result to get
\begin{equation}\label{lem:eq(1)beta=-2}
	y\big((-1)^{d+1}-1\big) + z(d+1)(-1)^{d+1}=0.
\end{equation}
Suppose that $d$ is even. 
Simplify the equation in \eqref{lem:eq(1)beta=-2} to get $2y=-(d+1)z$. 
Using this equation along with $x+y=0$,  eliminate $x$ and $y$ in \eqref{eq:vartheta_i; b=-2} and simplify the result to obtain
\begin{equation}
	\vartheta_i = z\left( \frac{d+1}{2} - \frac{d+1}{2}(-1)^i + i (-1)^i \right) =\vartheta_{d-i+1}
\end{equation}
for $0\leq i \leq d+1$.
In particular, for $i=1$ we have $\vartheta_1= \vartheta_d$, a contradiction. 
Therefore, $d$ is odd.
Next, we show that $\operatorname{Char}(\mathbb{F})=(d+1)/2$.
Abbreviate $p=\operatorname{Char}(\mathbb{F})$.
Recall that $p$ is zero or a prime number.
Since $d$ is odd, we simplify the equation \eqref{lem:eq(1)beta=-2} to get $z(d+1)=0$. 
If $d+1\ne 0$ in $\mathbb{F}$, then $z=0$. 
So, by \eqref{eq:vartheta_i; b=-2} it follows that $\vartheta_i = -y+y(-1)^i$ for $0\leq i \leq d+1$.
However, this is a contradiction as $\vartheta_1=-2y = \vartheta_d$.
Thus, $d+1=0$ in $\mathbb{F}$. 
This implies that $p$ is a prime factor of $d+1$.
Next, we show that $(d+1)/2\leq p$.
Suppose not.
Then $2p\leq d-1$.
Setting $i=2p$ in \eqref{eq:theta(i)beta=-2}, we have $\theta_i=\theta_0$, a contradiction.
We have shown that $(d+1)/2\leq p$ and $p$ divides $d+1$. 
Therefore, either $p=(d+1)/2$ or $p=d+1$.
We have $p$ is prime and $d$ is odd and $d\geq 3$, so $p\ne d+1$.
Therefore, $p=(d+1)/2$.
Lastly, we show that $d\geq 5$. 
If $d=3$, then $p=2$. 
But, $p\ne 2$ by assumption. 
Therefore, $d\geq 5$.
We have proved the claim.

Next, we discuss the scalars $\{\phi_i\}^d_{i=1}$.
Evaluate the right-hand side of \eqref{eq:phi-vartheta} using \eqref{eq:theta(i)beta=-2}--\eqref{eq:vartheta_i; b=-2} to obtain
\begin{equation}\label{eq:phi formula b=-2}
	\phi_i  =  y\Big((-1)^i-1\Big)+zi(-1)^{i} 
		+ \Big(b\big((-1)^i-1\big)-ci(-1)^i\Big)\Big(b^*\big((-1)^i-1\big)+c^*i(-1)^i\Big)
\end{equation}
for $1\leq i \leq d$.
Comparing \eqref{eq:th cl form, be=-2}, \eqref{eq:th* cl form, be=-2}, \eqref{eq:phi cl form, be=-2} with \eqref{eq:theta(i)beta=-2}, \eqref{eq:theta*(i)beta=-2}, \eqref{eq:phi formula b=-2}, respectively, and using Lemma \ref{iso_CHsystem}, we find that $\Phi$ is isomorphic to the CH system shown in \eqref{CHS_beta=-2} of Example \ref{ex:beta = -2}.

\medskip \noindent 
\textbf{Case} (iv): $\beta=0$ and $\operatorname{Char}(\mathbb{F}) = 2$.

\noindent
By Lemma \ref{lem:rec seq q-form}(iv) there exist scalars $a,b,c,a^*, b^*,c^* \in \mathbb{F}$ such that 
\begin{align}
	\theta_i & = a + bi + c\binom{i}{2}, \label{eq:theta(i)beta=0}\\
	\theta^*_i & = a^* + b^*i + c^*\binom{i}{2} \label{eq:theta*(i)beta=0}
\end{align}
for $0\leq i \leq d$.
We claim that $d=3$.
If $d\geq 4$, then $\theta_0=\theta_4$ by \eqref{eq:theta(i)beta=0}; 
this is a contradiction since the scalars $\{\theta_i\}^4_{i=0}$ are mutually distinct. 
Since $d\geq 3$, the claim follows.

Next, we discuss the scalars $\{\phi_i\}^3_{i=1}$.
Since the sequence $\{\vartheta_i\}^{3}_{i=0}$ is $\beta$-recurrent, by Lemma \ref{lem:rec seq q-form}(iv) there exist scalars $x, y, z \in \mathbb{F}$ such that
\begin{equation}\label{eq:vartheta_i; b=0}
	\vartheta_i = x + yi + z\binom{i}{2}
\end{equation}
for $0\leq i \leq 4$.
Since $\vartheta_0=0$, we have $x=0$.
Evaluate the right-hand side of \eqref{eq:phi-vartheta} using \eqref{eq:theta(i)beta=0}--\eqref{eq:vartheta_i; b=0} to obtain
\begin{equation}\label{eq:phi formula b=0}
	\phi_i  =  yi + z\binom{i}{2} + \left(bi+c\binom{i+1}{2}\right)\left(b^*i+c^*\binom{i}{2}\right)
\end{equation}
for $1\leq i \leq 3$.
Comparing \eqref{eq:th cl form, be=0}, \eqref{eq:th* cl form, be=0}, \eqref{eq:phi cl form, be=0} with \eqref{eq:theta(i)beta=0}, \eqref{eq:theta*(i)beta=0}, \eqref{eq:phi formula b=0}, respectively, and using Lemma \ref{iso_CHsystem}, we find that $\Phi$ is isomorphic to the CH system shown in \eqref{CHS_beta=0} of Example \ref{ex:beta = 0}.
The proof is complete.
\end{proof}

%\begin{lemma}
%Let $\Phi=(A; \{E_i\}^d_{i=0}; A^*; \{E^*_i\}^d_{i=0})$ be a recurrent CH system.
%Then there exists a sequence of scalars $\beta, \gamma, \gamma^*, \varrho, \varrho^*$ taken from $\mathbb{F}$ such that both the tridiagonal relations \eqref{TD1} and \eqref{TD2} hold.
%\end{lemma}
%\begin{proof}
%By Theorem \ref{thm:four families}, $\Phi$ belongs to one of the four families listed in Examples \ref{ex:beta not 2, -2}--\ref{ex:beta = 0}.
%By these examples, there exists $\beta \in \mathbb{F}$ such that each of sequences $\{\theta_i\}^d_{i=0}$, $\{\theta^*_i\}^d_{i=0}$, $\{\vartheta_i\}^{d+1}_{i=0}$ is $\beta$-recurrent.
%By Lemma \ref{lem:TD-beta rec}, the result follows.
%\end{proof}

We finish this section with a comment.
Pick $\beta\in\mathbb{F}$.
Let $\Phi=(A; \{E_i\}^d_{i=0}; A^*; \{E^*_i\}^d_{i=0})$ denote a $\beta$-recurrent CH system over $\mathbb{F}$.
Let $(\{\theta_i\}^d_{i=0}$, $\{\theta^*_i\}^d_{i=0}$, $\{\phi_i\}^d_{i=1})$ be the parameter array of $\Phi$.
Let the scalars $\{\vartheta_i\}^{d+1}_{i=0}$ be from \eqref{vartheta}.
Recall from Lemma \ref{lem:vth1=vthd} that $\vartheta_1\ne \vartheta_d$.
In the following lemma, we express $\vartheta_i$ $(1\leq i \leq d)$ as a linear combination of $\vartheta_1$ and $\vartheta_d$.

\begin{lemma}
With the above notation, for $1\leq i \leq d$ the following {\rm(i)}--{\rm(iv)} hold.
\begin{enumerate}[\normalfont(i)]
\item Assume $\beta\ne \pm2$. 
	Then
	\begin{equation}\label{eq: vth = l.c. vth1 vth2 case(i)}
		\vartheta_i  =  \frac{(q^i-1)(q^{d-i}-1)}{(q-1)(q^{d-1}-1)} \vartheta_1
		+  \frac{(q^{i-1}-1)(q^{d-i+1}-1)}{(q-1)(q^{d-1}-1)} \vartheta_d,
	\end{equation}
	where $q+q^{-1}=\beta$.
\item Assume $\beta = 2$ and $\operatorname{Char}(\mathbb{F})\ne 2$.
	Then
	\begin{equation}\label{eq: vth = l.c. vth1 vth2 case(ii)}
		\vartheta_i  =  \frac{i(d-i)}{d-1} \vartheta_1
		+  \frac{(i-1)(d-i+1)}{d-1} \vartheta_d.
	\end{equation}
\item Assume $\beta=- 2$, $\operatorname{Char}(\mathbb{F})\ne 2$, and $d$ odd.
	Then
	\begin{equation}\label{eq: vth = l.c. vth1 vth2 case(iii)}
		\vartheta_i  =  
		\begin{cases}
		\dfrac{i}{d-1} \vartheta_1 + \dfrac{d-i+1}{d-1} \vartheta_d & \qquad \text{if \quad  $i$ is even};  \vspace{0.2cm} \\ 
		\dfrac{d-i}{d-1} \vartheta_1 + \dfrac{i-1}{d-1} \vartheta_d & \qquad \text{if \quad  $i$ is odd}.
		\end{cases}
	\end{equation}
\item Assume $\beta=0$, $\operatorname{Char}(\mathbb{F})= 2$, and $d=3$. Then
	\begin{equation}
	\vartheta_2 = \vartheta_1+\vartheta_3.
	\end{equation}
\end{enumerate}
\end{lemma}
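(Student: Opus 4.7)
The plan is to handle cases (i)--(iii) by a unified linear-algebra argument and dispatch case (iv) by direct computation. For cases (i)--(iii), I treat both sides of the claimed identity as sequences in the index $i$, defined on the full range $\{0,1,\ldots,d+1\}$, and show they coincide as elements of the three-dimensional space $V_\beta$ of $\beta$-recurrent sequences (cf. Lemma \ref{lem:rec seq q-form}): spanned by $\{1,q^i,q^{-i}\}$ with $q+q^{-1}=\beta$ in case (i), by $\{1,i,i(i-1)/2\}$ in case (ii), and by $\{1,(-1)^i,i(-1)^i\}$ in case (iii). Two elements of $V_\beta$ agree as soon as they agree at three values of $i$ at which the evaluation functionals are linearly independent.

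First, the LHS $\vartheta_i$ lies in $V_\beta$ by hypothesis. Next I would expand the coefficient functions on the right-hand side into the respective basis of $V_\beta$: in case (i), using $q^{d+1}=1$, both $(q^i-1)(q^{d-i}-1)$ and $(q^{i-1}-1)(q^{d-i+1}-1)$ are $\mathbb{F}$-linear combinations of $1$, $q^i$, $q^{-i}$; analogous expansions work in cases (ii), (iii). Hence the RHS, as a sequence in $i$, is also in $V_\beta$.

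Then I would verify equality at the three points $i=0,1,d$ directly from the formulas. At $i=0$ both cross factors on the RHS vanish, so $\text{RHS}=0=\vartheta_0$; at $i=1$ the coefficient of $\vartheta_d$ vanishes and the coefficient of $\vartheta_1$ reduces to $1$; symmetrically at $i=d$. Finally I would compute the $3\times 3$ determinant of evaluation at $\{0,1,d\}$ in the chosen basis of $V_\beta$: in case (i) it equals $(q-q^{-1})(\beta-2)$, nonzero since $\beta\neq\pm2$ and $q\neq\pm1$ (which follows from $q^{d+1}=1$, $q^i\neq1$ for $1\leq i\leq d$, and $d\geq3$); in case (ii) it equals $d(d-1)/2$, nonzero since $\operatorname{Char}(\mathbb{F})=d+1\geq 4$; in case (iii) it equals $2(d-1)$, nonzero since $\operatorname{Char}(\mathbb{F})=(d+1)/2\neq 2$. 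Independence of these three functionals together with agreement at the three points yields $\text{LHS}=\text{RHS}$ throughout $V_\beta$, in particular for $1\leq i\leq d$.

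Case (iv) is immediate from the closed form. With $d=3$ and $\operatorname{Char}(\mathbb{F})=2$, write $\vartheta_i=yi+z\binom{i}{2}$ under the convention \eqref{eq:bin char=2}. Then $\vartheta_1=y$, $\vartheta_2=z$, and $\vartheta_3=y+z$, so $\vartheta_1+\vartheta_3=2y+z=z=\vartheta_2$. The main obstacle in the overall argument is confirming that the $3\times 3$ determinant stays nonzero in the positive-characteristic settings of (ii) and (iii); this is exactly where the characteristic restrictions from Examples \ref{ex:beta = 2} and \ref{ex:beta = -2} earn their keep. Everything else reduces to routine expansion.
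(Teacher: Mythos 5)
Your proposal is correct, and its core is the same as the paper's: both arguments live in the three-dimensional space of $\beta$-recurrent sequences furnished by Lemma \ref{lem:rec seq q-form} and both exploit exactly the data $\vartheta_0=0$ together with the facts $q^{d+1}=1$ (case (i)), $d+1=0$ in $\mathbb{F}$ (case (ii)), and $d$ odd with $d+1=0$ in $\mathbb{F}$ (case (iii)) established in the proof of Theorem \ref{thm:four families}. The execution differs: the paper writes $\vartheta_i=x+yq^i+zq^{-i}$, uses $\vartheta_0=0$ to eliminate $x$, solves the $2\times 2$ system from $i=1,d$ for $y,z$ (displays \eqref{pf:eq(1)varth fm}, \eqref{pf:eq(2)varth fm}), and substitutes back, treating only case (i) in detail; you instead verify that the right-hand side is itself a $\beta$-recurrent sequence in $i$, check agreement at $i=0,1,d$, and conclude by injectivity of evaluation at those three indices, which handles all four cases uniformly and avoids the explicit solve. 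Your determinants $(q-q^{-1})(\beta-2)$, $d(d-1)/2$, $2(d-1)$ are correct and nonzero under the stated hypotheses, so the interpolation step is sound.

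Two small points to tighten. First, in cases (ii) and (iii) the claim that ``both cross factors on the RHS vanish at $i=0$'' is not automatic: the second coefficient at $i=0$ is $-(d+1)/(d-1)$ resp.\ $(d+1)/(d-1)$, and it vanishes only because $d+1=0$ in $\mathbb{F}$; this fact (and likewise $q^{d+1}=1$, $\operatorname{Char}(\mathbb{F})=d+1$, $\operatorname{Char}(\mathbb{F})=(d+1)/2$) should be cited from the proof of Theorem \ref{thm:four families} (see the argument below \eqref{eq(2):beta ne pm 2}), which applies to an arbitrary recurrent CH system, rather than from Examples \ref{ex:beta = 2} and \ref{ex:beta = -2}, which merely assume it. Second, the expansion of the right-hand-side coefficients into the recurrent basis in cases (ii) and (iii) (quadratics in $i$, and the parity-split expressions rewritten via $1,(-1)^i,i(-1)^i$ using $\operatorname{Char}(\mathbb{F})\neq 2$) is asserted but not carried out; it is routine, but it is the step that makes the interpolation argument legitimate, so it deserves a line of computation in a final write-up.
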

\begin{proof}
(i): Since the sequence $\{\vartheta_i\}^{d+1}_{i=0}$ is $\beta$-recurrent, by Lemma \ref{def:beta-rec}(i) there exist scalars $x,y,z \in \overline{\mathbb{F}}$ such that
\begin{equation}\label{eq: varth gen form(1)}
	\vartheta_i = x+yq^i + zq^{-i} \qquad \qquad (0\leq i \leq d+1).
\end{equation}
Since $\vartheta_0=0$, we have $x+y+z=0$. 
Using this equation, eliminate $x$ in \eqref{eq: varth gen form(1)} to get
\begin{equation}\label{eq: varth gen form(2)}
	\vartheta_i = (q^i-1)y+(q^{-i}-1)z \qquad \qquad (0\leq i \leq d+1).
\end{equation}
Note that $q^{d+1}=1$ as we saw below \eqref{eq(2):beta ne pm 2}.
%As we saw below \eqref{eq(2):beta ne pm 2}, $q^{d+1}=1$.
Evaluate \eqref{eq: varth gen form(2)} at $i=1$ and $i=d$, and solve these two equations for $y$ and $z$.
Simplify the result using $q^{d+1}=1$ to get
%To obtain (i), use \eqref{eq:varth fm case1} to get
%\begin{equation*}
%	\vartheta_1  = (q-1)y + (q^{-1}-1)z, \qquad 
%	\vartheta_d = (q^d-1)y + (q^{-d}-1)z.
%\end{equation*}
%As we saw below \eqref{eq(2):beta ne pm 2}, $q^{d+1}=1$.
%Solve the above equations for $y$ and $z$ and simplify the result using $q^{d+1}=1$ to get
\begin{align}
	y & = -\frac{\vartheta_1 + q^d\vartheta_d }{(q-1)(q^{d-1}-1)}, \label{pf:eq(1)varth fm}
		 \\
	z & = -\frac{q^d\vartheta_1 + \vartheta_d}{(q-1)(q^{d-1}-1)}.\label{pf:eq(2)varth fm}
\end{align}
Eliminate $y$ and $z$ in \eqref{eq: varth gen form(2)} using \eqref{pf:eq(1)varth fm} and \eqref{pf:eq(2)varth fm}, and simplify the result to get \eqref{eq: vth = l.c. vth1 vth2 case(i)}.\\
(ii)--(iv): Similar.
%The cases (ii)--(iv) are similarly obtained by using \eqref{eq:varth cl form, be=2}, \eqref{eq:varth cl form, be=-2}, \eqref{eq:varth cl form, be=0}, respectively.
\end{proof}

%%%%%%%%%%%%%%%%%%%%%%%%%%%%%%%%%%%%%%%%%%
%%%%%%%%%%%%%%%%%%%%%%%%%%%%%%%%%%%%%%%%%%
%%%%%%%%%%%%%%%%%%%%%%%%%%%%%%%%%%%%%%%%%%
%%%%%%%%%%%%%%%%%%%%%%%%%%%%%%%%%%%%%%%%%%
\section{Six bases for $V$} \label{sec:SB}%%%%
Throughout this section, let $\Phi=(A; \{E_i\}^d_{i=0}; A^*; \{E^*_i\}^d_{i=0})$ denote a CH system on $V$.
Let the sequence $(\{\theta_i\}^d_{i=0}, \{\theta^*_i\}^d_{i=0}, \{\phi_i\}^d_{i=1})$ denote the parameter array of $\Phi$. 
In this section, we obtain the following results.
We display six bases for $V$ that we find attractive.
We display the transition matrices between certain pairs of bases among the six.
We display the matrices that represent $A$ and $A^*$ with respect to the six bases.

We now describe the first of the six bases.
Consider the decomposition $\{E_iV\}^d_{i=0}$ of $V$.
Recall the nonzero vector $u^* \in E^*_0V$ from above line \eqref{phi-split basis}.
By \cite[Lemma 8.1]{2010GodjLAA}, the sequence $\{E_iu^*\}^d_{i=0}$ is a basis for $V$.
We say this basis is \emph{$\Phi$-standard}.
In the next two results, we recall some characterizations of the $\Phi$-standard basis.

\begin{lemma}[cf. {\cite[Proposition 8.9]{2010GodjLAA}}]\label{lem:SB char(1)}
Let $\{u_i\}^d_{i=0}$ denote a sequence of vectors in $V$, not all zero.
Then the sequence $\{u_i\}^d_{i=0}$ is a $\Phi$-standard basis for $V$ if and only if both 
\begin{enumerate}[\normalfont(i)]
	\item $u_i \in E_iV$ for $0\leq i \leq d$, 
	\item $\sum^d_{i=0} u_i \in E^*_0V$.
\end{enumerate}
\end{lemma}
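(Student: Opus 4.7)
The plan is to verify both directions directly from the definition of the $\Phi$-standard basis, using the properties of the primitive idempotents $E_0,E_1,\ldots,E_d$.

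For the forward direction, suppose $\{u_i\}^d_{i=0}$ is a $\Phi$-standard basis. By definition there exists a nonzero $u^*\in E^*_0V$ with $u_i=E_iu^*$ for $0\leq i\leq d$. Condition (i) is immediate since $E_iu^*\in E_iV$, and condition (ii) follows at once from $\sum^d_{i=0}u_i=\sum^d_{i=0}E_iu^*=u^*\in E^*_0V$, where I invoke the resolution of identity $\sum^d_{i=0}E_i=I$.

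For the reverse direction, assume (i) and (ii) together with the hypothesis that not all $u_i$ vanish. Set $u^*:=\sum^d_{i=0}u_i$. By (ii), $u^*\in E^*_0V$. I would next apply $E_j$ to both sides and use (i) together with the orthogonality $E_jE_i=\delta_{ij}E_i$ to conclude that $E_ju^*=u_j$ for $0\leq j\leq d$. From this equality and the assumption that not all $u_i$ are zero, $u^*$ must be nonzero. Hence $u^*$ is a nonzero element of $E^*_0V$ with $u_i=E_iu^*$, so by the cited result \cite[Lemma 8.1]{2010GodjLAA} the sequence $\{u_i\}^d_{i=0}=\{E_iu^*\}^d_{i=0}$ is a basis for $V$, and by definition this is a $\Phi$-standard basis.

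No serious obstacle arises; the argument is essentially a bookkeeping exercise with the primitive idempotents. The only subtlety is the reverse implication, where I must justify that $u^*\neq 0$ before invoking \cite[Lemma 8.1]{2010GodjLAA}, and this is exactly the role of the hypothesis that the $u_i$ are not all zero.
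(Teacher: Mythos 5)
Your proposal is correct and follows essentially the same argument as the paper: the forward direction via $\sum_{i=0}^d E_i = I$, and the converse by setting $u^*=\sum_{i=0}^d u_i$, applying the idempotents to recover $u_i=E_iu^*$, and noting $u^*\neq 0$. Your extra care in justifying $u^*\neq 0$ (and the explicit appeal to \cite[Lemma 8.1]{2010GodjLAA}, which the paper builds into the definition of a $\Phi$-standard basis) is a harmless elaboration of the same proof.
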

\begin{proof}
Suppose that  $\{ u_i \}^d_{i=0}$ is a $\Phi$-standard basis for $V$.
Then there exists $0\ne u^* \in E^*_0V$ such that $u_i = E_iu^*$ for $0\leq i \leq d$.
This implies (i).
Moreover, $\sum^d_{i=0}u_i = \sum^d_{i=0} E_i u^* = Iu^*=u^* \in E^*_0V$, and thus (ii) holds.
Conversely, suppose that the sequence $\{u_i\}^d_{i=0}$ satisfies (i) and (ii).
Set $u^* = \sum^d_{i=0} u_i$. 
Then $u^*$ is nonzero and contained in $E^*_0V$.
From (i) we have $E_iu^* = E_i(\sum^d_{j=0} u_j) = u_i$ for $0\leq i \leq d$.
Therefore, $\{u_i\}^d_{i=0}$ is a $\Phi$-standard basis for $V$.
\end{proof}

\noindent
Let $B$ denote a matrix in $\MatF$ and let $\alpha$ denote a scalar in $\mathbb{F}$.
Then $B$ is said to have \emph{constant row sum $\alpha$} whenever $B_{i0}+ B_{i1} + \cdots + B_{id} = \alpha$ for $0\leq i \leq d$.

\begin{lemma}[cf. {\cite[Proposition 8.10]{2010GodjLAA}}]\label{lem:SB char(2)}
Let $\{u_i\}^d_{i=0}$ denote a basis for $V$.
Let $B$ (resp. $B^*$) denote the matrix representing $A$ (resp. $A^*$) with respect to $\{u_i\}^d_{i=0}$.
Then $\{u_i\}^d_{i=0}$ is a $\Phi$-standard basis for $V$ if and only if both 
\begin{enumerate}[\normalfont(i)]
	\item $B=\mathrm{diag}(\theta_0, \theta_1, \ldots, \theta_d)$,
	\item $B^*$ has constant row sum $\theta^*_0$.
\end{enumerate}
\end{lemma}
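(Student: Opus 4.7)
The plan is to deduce this characterization directly from the preceding Lemma \ref{lem:SB char(1)}, which characterizes a $\Phi$-standard basis as a sequence $\{u_i\}^d_{i=0}$ with $u_i \in E_iV$ for each $i$ and $\sum_{i=0}^d u_i \in E^*_0V$. The key observation is that condition (i) of the present lemma is precisely the matrix-level restatement of $u_i \in E_iV$, and condition (ii) is precisely the matrix-level restatement of $\sum_i u_i$ being a $\theta^*_0$-eigenvector of $A^*$. So the whole proof is a short translation.

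First I would handle the forward direction. Assume $\{u_i\}^d_{i=0}$ is $\Phi$-standard. By Lemma \ref{lem:SB char(1)}(i), $u_i \in E_iV$, so $Au_i = \theta_i u_i$ for $0 \leq i \leq d$, which gives (i). Set $u^* = \sum_{i=0}^d u_i$. By Lemma \ref{lem:SB char(1)}(ii), $u^* \in E^*_0V$, hence $A^* u^* = \theta^*_0 u^*$. On the other hand, by the definition of $B^*$,
\begin{equation*}
A^* u^* \;=\; \sum_{j=0}^d A^* u_j \;=\; \sum_{j=0}^d \sum_{i=0}^d B^*_{ij} u_i \;=\; \sum_{i=0}^d \Bigl(\sum_{j=0}^d B^*_{ij}\Bigr) u_i.
\end{equation*}
Comparing this with $\theta^*_0 u^* = \sum_i \theta^*_0 u_i$ and using linear independence of $\{u_i\}^d_{i=0}$ yields $\sum_{j=0}^d B^*_{ij} = \theta^*_0$ for every $i$, which is (ii).

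Next I would prove the converse. Assume (i) and (ii). Condition (i) immediately forces each $u_i$ to be a nonzero eigenvector of $A$ for $\theta_i$, so $u_i \in E_iV$. Set $u^* = \sum_i u_i$; the same expansion as above combined with (ii) gives $A^* u^* = \theta^*_0 u^*$. Since $\{u_i\}^d_{i=0}$ is a basis, $u^* \ne 0$, so $u^*$ is a nonzero $\theta^*_0$-eigenvector of $A^*$ and therefore lies in $E^*_0V$. Lemma \ref{lem:SB char(1)} then shows that $\{u_i\}^d_{i=0}$ is $\Phi$-standard.

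There is no serious obstacle here; the whole argument is essentially bookkeeping. The one point where I would slow down is the verification that the constant-row-sum condition on $B^*$ is exactly equivalent to $A^* \bigl(\sum_i u_i\bigr) = \theta^*_0 \bigl(\sum_i u_i\bigr)$, since the ``rows'' of $B^*$ correspond to the coefficients of $u_i$ after summing the images $A^* u_j$ over all $j$; once this identification is made, the two implications are symmetric and immediate from Lemma \ref{lem:SB char(1)}.
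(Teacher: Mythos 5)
Your proposal is correct and follows the same route as the paper: both reduce the statement to Lemma \ref{lem:SB char(1)} by noting that $u_i\in E_iV$ for all $i$ is equivalent to $B=\mathrm{diag}(\theta_0,\ldots,\theta_d)$, and that expanding $A^*\sum_j u_j$ in the basis shows $\sum_j u_j\in E^*_0V$ is equivalent to $B^*$ having constant row sum $\theta^*_0$. The only difference is presentational: you spell out the two implications separately, while the paper states the two equivalences and invokes Lemma \ref{lem:SB char(1)} once.
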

\begin{proof}
Recall that for $0\leq i \leq d$, $E_iV$ is the eigenspace of $A$ associated with $\theta_i$.
Therefore $u_i\in E_iV$ for $0\leq i \leq d$ if and only if $B=\mathrm{diag}(\theta_0, \theta_1, \ldots, \theta_d)$.
Next, we observe that
$$
	A^* \sum^d_{j=0}u_j 
	= \sum^d_{j=0}A^*u_j 
	= \sum^d_{j=0} \sum^d_{i=0} B_{ij}u_i
	= \sum^d_{i=0} \sum^d_{j=0} B_{ij}u_i	
	= \sum^d_{i=0} (B_{i0} + B_{i1} + \cdots + B_{id})u_i.
$$
From this, it follows that $\sum^d_{j=0}u_j \in E^*_0V$ if and only if $B^*$ has constant row sum $\theta^*_0$.
The result follows from these comments and Lemma \ref{lem:SB char(1)}.
\end{proof}

We have discussed the $\Phi$-standard basis for $V$.
We now discuss five more bases for $V$.
Recall the $\Phi$-split basis $\{v_i\}^d_{i=0}$ for $V$ from \eqref{phi-split basis}.
The sequence $\{v_{d-i}\}^d_{i=0}$ is a basis for $V$.
We call this basis the \emph{inverted $\Phi$-split basis}.
Recall the dual CH system $\Phi^*=(A^*, \{E^*_i\}^d_{i=0}, A, \{E_i\}^d_{i=0})$.
Recall the nonzero vector $u \in E_0V$ from above line \eqref{phi*-split basis}.
By definition, the sequence $\{E^*_i u\}^d_{i=0}$ is a $\Phi^*$-standard basis for $V$.
Recall the $\Phi^*$-split basis $\{v^*_i\}^d_{i=0}$ for $V$ from \eqref{phi*-split basis}.
We consider the inverted $\Phi^*$-split basis $\{v^*_{d-i}\}^d_{i=0}$ for $V$.
We have now described six bases for $V$. The six bases are shown in the table below.
{\renewcommand{\arraystretch}{1.2}
\begin{equation}\label{six bases}
\begin{tabular}{l|cl}
	\qquad \quad name & & basis \\
	\hline
	$\Phi$-standard basis 	&& $\{E_iu^*\}^d_{i=0}$ \\
	$\Phi$-split basis 		&& $\{v_i\}^d_{i=0}$ \\
	inverted $\Phi$-split basis && $\{v_{d-i}\}^d_{i=0}$ \\
	$\Phi^*$-standard basis	&& $\{E^*_iu\}^d_{i=0}$\\
	$\Phi^*$-split basis		&& $\{v^*_i\}^d_{i=0}$\\
	inverted $\Phi^*$-split basis	&& $\{v^*_{d-i}\}^d_{i=0}$
\end{tabular}
\end{equation}}

Our next goal is to describe the transition matrices between certain pairs of bases among the six.
First, we recall the notion of a transition matrix.
Suppose we are given two bases for $V$, denoted $\{x_i\}^d_{i=0}$ and $\{y_i\}^d_{i=0}$.
By the \emph{transition matrix} from $\{x_i\}^d_{i=0}$ to $\{y_i\}^d_{i=0}$, we mean the matrix $T\in\MatF$ such that $y_j=\sum^d_{i=0} T_{ij}x_i$ for $0\leq j \leq d$. 
Consider the following diagram:
\begin{equation}\label{6 bases diagram}
\begin{tikzcd}
	\{E_iu^*\}^d_{i=0} \arrow[r,-] & \{v_{d-i}\}^d_{i=0} \arrow[r,-] \arrow[d,-] & \{v_i^*\}^d_{i=0}  \arrow[d,-] &	 
	\\
	& \{v_{i}\}^d_{i=0} \arrow[r,-] & \{v_{d-i}^*\}^d_{i=0}   \arrow[r,-] & \{E^*_iu\}^d_{i=0}
\end{tikzcd}
\end{equation}
We will display the transition matrices between each pair of bases that are adjacent in the above diagram.

\begin{lemma}\label{lem:TM SB and invspB}
The transition matrix from $\{E_i u^*\}^d_{i=0}$ to $\{v_{d-i}\}^d_{i=0}$ is upper triangular with $(i,j)$-entry
\begin{equation}\label{eq: mat P}
	(\th_i-\th_{j+1})(\th_i-\th_{j+2}) \cdots (\th_i-\th_{d}) 
\end{equation}
for $0\leq i \leq j\leq d$.
Moreover, the transition matrix from $\{v_{d-i}\}^d_{i=0}$ to $\{E_i u^*\}^d_{i=0}$ is upper triangular with $(i,j)$-entry
\begin{equation}\label{eq: mat Pinv}
	\dfrac{1}{(\theta_j - \theta_{i})\cdots(\theta_j - \theta_{j-1})(\theta_j - \theta_{j+1})\cdots(\theta_j - \theta_{d})} 
\end{equation}
for $0\leq i \leq j\leq d$.
\end{lemma}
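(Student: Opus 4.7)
Both transition matrices can be computed using the polynomial expression $v_{d-j} = q_j(A)\, u^*$ with $q_j(\lambda) := \prod_{k=j+1}^d(\lambda - \theta_k)$, which is immediate from \eqref{phi-split basis}.

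For the forward direction, the plan is to write $u^* = \sum_{i=0}^d E_i u^*$ using $\sum_{i=0}^d E_i = I$ and then apply $q_j(A)$ using $AE_i = \theta_i E_i$, yielding
\begin{equation*}
	v_{d-j} = \sum_{i=0}^d q_j(\theta_i)\, E_i u^* = \sum_{i=0}^d \left(\prod_{k=j+1}^d(\theta_i - \theta_k)\right) E_i u^*.
\end{equation*}
The coefficient vanishes when $i \in \{j+1, \ldots, d\}$, because the factor $\theta_i - \theta_i = 0$ appears, and is nonzero otherwise by Notation \ref{notation:scalars}(i). This simultaneously proves upper triangularity and yields \eqref{eq: mat P}.

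For the inverse, the plan is to apply \eqref{prim idemp}, rewriting $E_j = \prod_{k \ne j}(A - \theta_k I)/\prod_{k \ne j}(\theta_j - \theta_k)$, and to split the product at $k = j$:
\begin{equation*}
	E_j u^* = \frac{1}{\prod_{k \ne j}(\theta_j - \theta_k)} \prod_{k=0}^{j-1}(A - \theta_k I)\, q_j(A)\, u^* = \frac{1}{\prod_{k \ne j}(\theta_j - \theta_k)} \prod_{k=0}^{j-1}(A - \theta_k I)\, v_{d-j}.
\end{equation*}
What remains is to expand $\prod_{k=0}^{j-1}(A - \theta_k I)\, v_{d-j}$ in the basis $\{v_{d-i}\}_{i=0}^d$. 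Reversing the indexing in Proposition \ref{prop: CHpair mat split}, the action of $A$ on this basis is $A v_{d-i} = \theta_i v_{d-i} + v_{d-i+1}$ for $1 \le i \le d$ and $A v_d = \theta_0 v_d$, so $(A - \theta_k I) v_{d-i} = (\theta_i - \theta_k) v_{d-i} + v_{d-i+1}$ with the convention $v_{d+1} = 0$.

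With this recursion in hand, I would prove by induction on $m \in \{0, 1, \ldots, j\}$ that
\begin{equation*}
	\prod_{k=j-m}^{j-1}(A - \theta_k I)\, v_{d-j} = \sum_{r=0}^m \left(\prod_{s=r}^{m-1}(\theta_j - \theta_{j-1-s})\right) v_{d-j+r}.
\end{equation*}
The inductive step is the technical heart: applying $(A - \theta_{j-m-1} I)$ to the right-hand side, the coefficient of $v_{d-j+r}$ absorbs two contributions which combine via the identity $(\theta_{j-r} - \theta_{j-m-1}) + (\theta_j - \theta_{j-r}) = \theta_j - \theta_{j-m-1}$. Setting $m = j$ and reindexing $i = j - r$ gives
\begin{equation*}
	\prod_{k=0}^{j-1}(A - \theta_k I)\, v_{d-j} = \sum_{i=0}^j \left(\prod_{l=0}^{i-1}(\theta_j - \theta_l)\right) v_{d-i}.
\end{equation*}
Dividing by $\prod_{k \ne j}(\theta_j - \theta_k)$ produces the coefficient
\begin{equation*}
	\frac{\prod_{l=0}^{i-1}(\theta_j - \theta_l)}{\prod_{k=0,\, k \ne j}^d(\theta_j - \theta_k)} = \frac{1}{\prod_{l=i,\, l \ne j}^d(\theta_j - \theta_l)},
\end{equation*}
which is exactly \eqref{eq: mat Pinv}. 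Upper triangularity of the inverse is manifest since the sum ranges over $i \le j$.
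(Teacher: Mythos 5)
Your proof is correct. The first half coincides with the paper's argument: both expand $v_{d-j}=\prod_{k=j+1}^{d}(A-\theta_k I)u^*$ through $\sum_{i=0}^d E_i=I$ and read off the coefficient $\prod_{k=j+1}^d(\theta_i-\theta_k)$, which vanishes for $i>j$. For the inverse, however, you take a genuinely different route. The paper writes down the candidate matrix $H$ with entries \eqref{eq: mat Pinv} and verifies $PH=I$ by a telescoping computation: for $i<j$ it shows $(\theta_i-\theta_j)(PH)_{ij}=0$ by splitting $\theta_i-\theta_j=(\theta_i-\theta_\ell)+(\theta_\ell-\theta_j)$, so the two resulting sums shift into each other and cancel. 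You instead derive the inverse entries constructively: using \eqref{prim idemp} you express $E_ju^*$ as $\bigl(\prod_{k\ne j}(\theta_j-\theta_k)\bigr)^{-1}\prod_{k=0}^{j-1}(A-\theta_kI)\,v_{d-j}$, and then expand $\prod_{k=0}^{j-1}(A-\theta_kI)\,v_{d-j}$ in the inverted split basis by induction, using the bidiagonal action $(A-\theta_kI)v_{d-i}=(\theta_i-\theta_k)v_{d-i}+v_{d-i+1}$ from Proposition \ref{prop: CHpair mat split}; I checked the inductive step (the identity $(\theta_{j-r}-\theta_{j-m-1})+(\theta_j-\theta_{j-r})=\theta_j-\theta_{j-m-1}$ does exactly the required absorption) and the final cancellation of $\prod_{l=0}^{i-1}(\theta_j-\theta_l)$ against $\prod_{k\ne j}(\theta_j-\theta_k)$, which indeed yields \eqref{eq: mat Pinv}. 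The trade-off: the paper's guess-and-verify argument is shorter once the formula for $H$ is known and is a clean exercise in matrix algebra, while your derivation explains where the formula comes from (it never requires knowing the answer in advance) at the cost of a slightly heavier induction; both establish upper triangularity and the stated entries.
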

\begin{proof}
Let $P$ denote the upper triangular matrix in $\MatF$ with $(i,j)$-entry \eqref{eq: mat P} for $0\leq i \leq j \leq d$.
By \eqref{phi-split basis} we find that for $0\leq j \leq d$,
\begin{align*}
	v_{d-j}
	& = (A-\theta_{j+1}I)(A-\theta_{j+2}I) \cdots (A-\theta_{d}I)u^* \\
	& = \sum^d_{i=0}E_i(A-\theta_{j+1}I)(A-\theta_{j+2}I) \cdots (A-\theta_{d}I)u^*\\
	& = \sum^d_{i=0}(\theta_i-\theta_{j+1})(\theta_i-\theta_{j+2}) \cdots (\theta_i-\theta_{d})E_iu^*\\
	& = \sum^d_{i=0}P_{ij}E_iu^*.
\end{align*}
By these comments, $P$ is the transition matrix from $\{E_i u^*\}^d_{i=0}$ to $\{v_{d-i}\}^d_{i=0}$.

To get the second assertion, we find the inverse of $P$.
Since $P$ is upper triangular, the inverse of $P$ is also upper triangular.
Let $H$ denote the upper triangular matrix in $\MatF$ with $(i,j)$-entry \eqref{eq: mat Pinv} for $0\leq i \leq j \leq d$.
We claim that $H$ is the inverse of $P$.
To this end, it suffices to show that $PH=I$.
Observe that $PH$ is upper triangular since $P$ and $H$ are upper triangular.
Consider the $(i,j)$-entry of $PH$ for $0\leq i \leq j \leq d$.
If $i=j$, then $(PH)_{ii}=P_{ii}H_{ii}=1$ by \eqref{eq: mat P} and \eqref{eq: mat Pinv}.
Next, assume $i<j$.
We show that $(PH)_{ij}=0$. 
Since $\theta_0, \theta_1, \ldots, \theta_d$ are mutually distinct, it suffices to show that $(\theta_i-\theta_j)(PH)_{ij}=0$. 
Since $P_{i\ell}=0$ for $0\leq \ell \leq i-1$ by \eqref{eq: mat P} and $H_{\ell j}=0$ for $j+1\leq \ell \leq d$ by \eqref{eq: mat Pinv}, we have
\begin{equation*}
	(PH)_{ij} = \sum^{i-1}_{\ell=0} P_{i\ell}H_{\ell j} + \sum^j_{\ell=i} P_{i\ell}H_{\ell j} + \sum^d_{\ell=j+1} P_{i\ell}H_{\ell j}=\sum^j_{\ell =i} P_{i\ell}H_{\ell j}.
\end{equation*}
Therefore,
\begin{align*}
	(\theta_i-\theta_j)(PH)_{ij}
	& = \sum^{j}_{\ell=i} P_{i\ell}H_{\ell j}(\theta_i-\theta_j)\\
	& = \sum^{j}_{\ell=i} P_{i\ell}H_{\ell j}(\theta_i-\theta_\ell + \theta_\ell - \theta_j)\\
	& = \sum^{j}_{\ell=i} P_{i\ell}(\theta_i-\theta_\ell)H_{\ell j} - \sum^{j}_{\ell=i} P_{i\ell}H_{\ell j}(\theta_j-\theta_\ell)\\
	& = \sum^{j}_{\ell=i+1} P_{i,\ell-1}H_{\ell j} - \sum^{j-1}_{\ell=i} P_{i\ell}H_{\ell+1,j}.
\end{align*}
We notice that the last two sums are the same, so it follows that $(\theta_i-\theta_j)(PH)_{ij}=0$.
Combining all our above comments, we find that $H$ is the inverse of $P$.
Therefore, $H$ is the transition matrix from $\{v_{d-i}\}^d_{i=0}$ to $\{E_i u^*\}^d_{i=0}$.
\end{proof}

\noindent
Recall the scalar $\varepsilon^*$ from Definition \ref{notation epsilon}.

\begin{lemma}\label{lem:TM invSpB and dualSpB}
The transition matrix from $\{v_{d-i}\}^d_{i=0}$ to $\{v^*_i\}^d_{i=0}$ is diagonal with $(i,i)$-entry
	\begin{equation*}
	\frac{\varepsilon^*  (\theta^*_0-\theta^*_{1})(\theta^*_0-\theta^*_{2}) \cdots (\theta^*_0 - \theta^*_{d})}{\phi_{1}\phi_{2} \cdots \phi_{d-i}}
	\end{equation*}
for $0\leq i \leq d$.
Moreover, the transition matrix from $\{v^*_i\}^d_{i=0}$ to $\{v_{d-i}\}^d_{i=0}$ is diagonal with $(i,i)$-entry
	\begin{equation*}
	\frac{\phi_{1}\phi_{2} \cdots \phi_{d-i}}{\varepsilon^*  (\theta^*_0-\theta^*_{1})(\theta^*_0-\theta^*_{2}) \cdots (\theta^*_0 - \theta^*_{d})}
	\end{equation*}
for $0\leq i \leq d$.
\end{lemma}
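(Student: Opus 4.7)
The plan is to read the statement as a direct bookkeeping consequence of Proposition \ref{prop:v v*}(ii), with essentially no new content beyond recognizing what a transition matrix means in this setting.

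First I would recall the convention: the transition matrix $T$ from a basis $\{x_i\}_{i=0}^d$ to a basis $\{y_j\}_{j=0}^d$ is defined by $y_j=\sum_{i=0}^d T_{ij}x_i$. Here the source basis is $\{v_{d-i}\}_{i=0}^d$ and the target basis is $\{v^*_j\}_{j=0}^d$. Proposition \ref{prop:v v*}(ii) already expresses $v^*_j$ as a single explicit scalar multiple of $v_{d-j}$, namely
\[
v^*_j \;=\; \varepsilon^{*}\,\frac{(\theta^*_0-\theta^*_1)(\theta^*_0-\theta^*_2)\cdots(\theta^*_0-\theta^*_d)}{\phi_1\phi_2\cdots\phi_{d-j}}\,v_{d-j}.
\]
Reading this off against the defining relation $v^*_j=\sum_{i=0}^d T_{ij}v_{d-i}$, I would conclude $T_{jj}$ equals the displayed scalar and $T_{ij}=0$ for $i\ne j$. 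This establishes the first assertion.

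For the second assertion I would simply invert a diagonal matrix entrywise. Since each diagonal entry of $T$ is nonzero (the $\phi_k$ are nonzero by Definition \ref{def:A A*}, the $\theta^*_k$ are mutually distinct by Notation \ref{notation:scalars}, and $\varepsilon^*\ne 0$ by the remark following Lemma \ref{scalar epsilon epsilon*}), the inverse $T^{-1}$ is diagonal with $(i,i)$-entry equal to the reciprocal of $T_{ii}$. This reciprocal is exactly the expression claimed in the second half of the lemma.

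There is really no obstacle here; the only thing one needs to be careful about is the index convention for transition matrices (source indexes rows of $T$, target indexes columns) and the nonvanishing of the diagonal entries, both of which are immediate from results already proved. So this lemma is essentially a restatement of Proposition \ref{prop:v v*}(ii) together with the observation that the relation between $v^*_j$ and $v_{d-j}$ is one-to-one on indices.
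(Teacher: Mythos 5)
Your proposal is correct and matches the paper's proof, which simply cites Proposition \ref{prop:v v*}; you spell out the same reading of part (ii) as a diagonal transition matrix and invert it entrywise. The only minor quibble is that in this section the nonvanishing of the $\phi_i$ and distinctness of the $\theta^*_i$ come from $\Phi$ being a CH system (split sequence nonzero, $A^*$ multiplicity-free) rather than from Notation \ref{notation:scalars}/Definition \ref{def:A A*}, but the facts you use are all valid here.
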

\begin{proof}
By Proposition \ref{prop:v v*}.
\end{proof}

\noindent
Let $Z$ denote the matrix in $\MatF$ with $(i,j)$-entry
\begin{equation}\label{eq: Z}
	Z_{ij} = \begin{cases}
	1 & \text{if} \quad i+j=d,\\
	0 & \text{if} \quad i+j \ne d
	\end{cases}
	\qquad \qquad (0\leq i,j \leq d).
\end{equation}
Observe that $Z^2=I$.

\begin{lemma}\label{lem:TM spB and invspB}
The transition matrix from $\{v_i\}^d_{i=0}$ to $\{v_{d-i}\}^d_{i=0}$ is equal to $Z$.
Moreover, the transition matrix from $\{v_{d-i}\}^d_{i=0}$ to $\{v_{i}\}^d_{i=0}$ is equal to $Z$.
\end{lemma}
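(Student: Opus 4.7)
The plan is to apply the definition of the transition matrix directly; no computation beyond unpacking notation is required. Recall that by the definition stated just before Lemma \ref{lem:TM SB and invspB}, the transition matrix $T$ from a basis $\{x_i\}^d_{i=0}$ to a basis $\{y_i\}^d_{i=0}$ is determined by the relations $y_j = \sum^d_{i=0} T_{ij} x_i$ for $0\leq j\leq d$.

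For the first assertion, take $x_i = v_i$ and $y_j = v_{d-j}$. Then
\begin{equation*}
	v_{d-j} = \sum^d_{i=0} \delta_{i,d-j}\, v_i,
\end{equation*}
so the $(i,j)$-entry of the transition matrix is $\delta_{i,d-j}$, which by \eqref{eq: Z} is exactly $Z_{ij}$. For the second assertion, take $x_i = v_{d-i}$ and $y_j = v_j$. Writing $v_j = v_{d-(d-j)} = x_{d-j}$ gives
\begin{equation*}
	v_j = \sum^d_{i=0} \delta_{i,d-j}\, v_{d-i},
\end{equation*}
so again the $(i,j)$-entry equals $Z_{ij}$. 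Alternatively, having established one direction, the reverse transition matrix is the inverse of $Z$, and since $Z^2=I$ (as noted in the paragraph containing \eqref{eq: Z}), this inverse is $Z$ itself.

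There is essentially no obstacle here; the only thing to be careful about is not conflating the two conventions (the index $j$ of the new basis versus the index $i$ of the old basis) when reading off the entries. Given the brevity, the written-up proof can simply consist of the display above together with a reference to $Z^2 = I$ for the second claim.
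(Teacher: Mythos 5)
Your proposal is correct and matches the paper's approach: the paper simply declares the lemma immediate from \eqref{eq: Z}, and your write-up is just the explicit unpacking of the transition-matrix definition (plus the optional observation $Z^2=I$) that makes that immediacy visible. Nothing further is needed.
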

\begin{proof}
Immediate from \eqref{eq: Z}.
\end{proof}

\begin{lemma}\label{lem:dual TM spB and dual invspB}
The transition matrix from $\{v^*_i\}^d_{i=0}$ to $\{v^*_{d-i}\}^d_{i=0}$ is equal to $Z$.
Moreover, the transition matrix from $\{v^*_{d-i}\}^d_{i=0}$ to $\{v^*_{i}\}^d_{i=0}$ is equal to $Z$.
\end{lemma}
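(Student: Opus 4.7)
The plan is to imitate the proof of Lemma \ref{lem:TM spB and invspB} verbatim, since the statement for the $\Phi^*$-split basis is entirely formal and depends only on the combinatorics of the reindexing $i \mapsto d-i$. Specifically, I would unwind the definition of transition matrix: if $T$ denotes the transition matrix from $\{v^*_i\}_{i=0}^d$ to $\{v^*_{d-i}\}_{i=0}^d$, then by definition $v^*_{d-j} = \sum_{i=0}^d T_{ij}\, v^*_i$ for $0\le j\le d$. Since $\{v^*_i\}_{i=0}^d$ is a basis, the coefficients are uniquely determined: $T_{ij}=1$ when $i=d-j$ and $T_{ij}=0$ otherwise. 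Comparing with \eqref{eq: Z}, this is exactly $Z$.

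For the second assertion, I would invoke the identity $Z^2=I$ noted just below \eqref{eq: Z}. The transition matrix from $\{v^*_{d-i}\}_{i=0}^d$ back to $\{v^*_i\}_{i=0}^d$ is the inverse of $T=Z$, hence equals $Z$ again. Alternatively, one can repeat the direct computation: setting $w^*_i = v^*_{d-i}$, one has $v^*_j = w^*_{d-j} = \sum_{i=0}^d Z_{ij}w^*_i$, yielding the same matrix $Z$.

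The main potential obstacle is essentially nil: there is no content beyond the definition of $Z$ and the self-inverse property $Z^2=I$. One could alternatively derive the result as a corollary of the observation that the same reindexing lemma, being purely formal about ordered bases, applies to every basis of $V$, and in particular to $\{v^*_i\}_{i=0}^d$. I would state the proof as a single sentence: \emph{Immediate from \eqref{eq: Z} and $Z^2 = I$, by the same argument used for Lemma \ref{lem:TM spB and invspB}.}
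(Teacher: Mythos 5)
Your proposal is correct and matches the paper's approach: the paper also proves this lemma as an immediate consequence of the definition of $Z$ in \eqref{eq: Z}, exactly as in Lemma \ref{lem:TM spB and invspB}. Your extra remark that the inverse direction follows from $Z^2=I$ (or the same direct computation) is fine and consistent with the paper's one-line proof.
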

\begin{proof}
Immediate from \eqref{eq: Z}.
\end{proof}

\noindent
Recall the scalar $\varepsilon$ from Definition \ref{notation epsilon}.

\begin{lemma}\label{lem:TM dual invSpB and SpB}
The transition matrix from $\{v^*_{d-i}\}^d_{i=0}$ to $\{v_i\}^d_{i=0}$ is diagonal with $(i,i)$-entry
	\begin{equation*}
	\frac{\varepsilon  (\theta_0-\theta_{1})(\theta_0-\theta_{2}) \cdots (\theta_0 - \theta_{d})}{\phi_{i+1}\phi_{i+2} \cdots \phi_{d}}
	\end{equation*}
for $0\leq i \leq d$. Moreover, the transition matrix from $\{v_i\}^d_{i=0}$ to $\{v^*_{d-i}\}^d_{i=0}$ is diagonal with $(i,i)$-entry
	\begin{equation*}
	\frac{\phi_{i+1}\phi_{i+2} \cdots \phi_{d}}{\varepsilon  (\theta_0-\theta_{1})(\theta_0-\theta_{2}) \cdots (\theta_0 - \theta_{d})}
	\end{equation*}
for $0\leq i \leq d$.
\end{lemma}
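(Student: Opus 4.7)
The plan is to read both transition matrices directly off Proposition \ref{prop:v v*}(i). That proposition asserts the scalar identity
\begin{equation*}
v_j \;=\; \varepsilon \, \frac{(\theta_0-\theta_{1})(\theta_0-\theta_{2}) \cdots (\theta_0 - \theta_{d})}{\phi_{j+1}\phi_{j+2} \cdots \phi_{d}}\, v^*_{d-j}
\end{equation*}
for each $0 \le j \le d$. By the definition of transition matrix recalled above the diagram \eqref{6 bases diagram}, the $(i,j)$-entry of the transition matrix $T$ from $\{v^*_{d-i}\}^d_{i=0}$ to $\{v_i\}^d_{i=0}$ is precisely the coefficient of $v^*_{d-i}$ when $v_j$ is expanded in the basis $\{v^*_{d-i}\}^d_{i=0}$. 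Since the displayed expansion has a single nonzero term (indexed by $i=j$), the entry $T_{ij}$ vanishes for $i \ne j$, and for $i=j$ it equals the prefactor shown above with $j$ replaced by $i$. This proves the first assertion.

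For the second assertion, I would first observe that every diagonal entry of $T$ is nonzero: the scalar $\varepsilon$ is nonzero by the remark below Lemma \ref{scalar epsilon epsilon*}, the $\theta_i$ are mutually distinct by Notation \ref{notation:scalars}(i), and the $\phi_i$ are nonzero by Notation \ref{notation:scalars}(ii). Hence $T$ is invertible, and the transition matrix from $\{v_i\}^d_{i=0}$ to $\{v^*_{d-i}\}^d_{i=0}$ equals $T^{-1}$. Since the inverse of a diagonal matrix is obtained by inverting its diagonal entries, $T^{-1}$ is diagonal with $(i,i)$-entry
\begin{equation*}
\frac{\phi_{i+1}\phi_{i+2} \cdots \phi_{d}}{\varepsilon \, (\theta_0-\theta_{1})(\theta_0-\theta_{2}) \cdots (\theta_0 - \theta_{d})},
\end{equation*}
which matches the claimed formula.

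No substantive obstacle is anticipated; the lemma is, in effect, just a repackaging of Proposition \ref{prop:v v*}(i) in the language of transition matrices, exactly in parallel with how Lemma \ref{lem:TM invSpB and dualSpB} was obtained from Proposition \ref{prop:v v*}(ii). The write-up can therefore be kept to a single short paragraph citing Proposition \ref{prop:v v*}(i) and noting that a diagonal matrix is inverted entrywise.
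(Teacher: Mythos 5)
Your proposal is correct and follows essentially the paper's own route: the paper proves both assertions simply by citing Proposition \ref{prop:v v*}, which is exactly what you do (reading the first matrix off part (i) and obtaining the second by inverting the diagonal entries, whose nonvanishing you rightly note). The only cosmetic point is that in Section \ref{sec:SB} the distinctness of the $\theta_i$ and nonvanishing of the $\phi_i$ come from $A$ being multiplicity-free and from the definition of the $\Phi$-split sequence rather than from Notation \ref{notation:scalars}, but the facts invoked are the same.
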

\begin{proof}
By Proposition \ref{prop:v v*}.
\end{proof}

\begin{lemma}\label{lem:dual TM SB and invspB}
The transition matrix from $\{E^*_i u\}^d_{i=0}$ to $\{v^*_{d-i}\}^d_{i=0}$ is upper triangular with $(i,j)$-entry
\begin{equation*}
	(\th^*_i-\th^*_{j+1})(\th^*_i-\th^*_{j+2})\cdots (\th^*_i-\th^*_{d})
\end{equation*}
for $0\leq i \leq j \leq d$.
Moreover, the transition matrix from $\{v^*_{d-i}\}^d_{i=0}$ to $\{E^*_i u\}^d_{i=0}$ is upper triangular with $(i,j)$-entry
\begin{equation*}
	\dfrac{1}{(\theta^*_j - \theta^*_{i})\cdots(\theta^*_j - \theta^*_{j-1})(\theta^*_j - \theta^*_{j+1})\cdots(\theta^*_j - \theta^*_{d})}
\end{equation*}
for $0\leq i \leq j \leq d$.
\end{lemma}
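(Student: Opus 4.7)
The plan is to derive this lemma by dualizing Lemma \ref{lem:TM SB and invspB}, that is, by applying it to the dual CH system $\Phi^*=(A^*; \{E^*_i\}^d_{i=0}; A; \{E_i\}^d_{i=0})$ rather than to $\Phi$. Everything in Lemma \ref{lem:TM SB and invspB} that was labeled without a star for $\Phi$ will pick up a star when reinterpreted for $\Phi^*$, and this is exactly the correspondence that produces the claimed formulas.

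First I would check the dictionary. In $\Phi^*$, the eigenvalue sequence is $\{\theta^*_i\}^d_{i=0}$ (the roles of the eigenvalue and dual eigenvalue sequences swap, as recorded in the table in Section \ref{Sec:Phi-split seq}). The vector $u \in E_0V$ plays the role that $u^*$ played in the definition of the $\Phi$-standard basis, since $E_0$ in $\Phi^*$ is what $E^*_0$ was in $\Phi$; thus $\{E^*_iu\}^d_{i=0}$ is precisely a $\Phi^*$-standard basis (as noted just above the table in \eqref{six bases}). Similarly, the $\Phi^*$-split basis from \eqref{phi*-split basis} is $\{v^*_i\}^d_{i=0}$, so the inverted $\Phi^*$-split basis is $\{v^*_{d-i}\}^d_{i=0}$.

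Now I would invoke Lemma \ref{lem:TM SB and invspB} with $\Phi$ replaced by $\Phi^*$ throughout. The first assertion of that lemma then gives that the transition matrix from $\{E^*_iu\}^d_{i=0}$ to $\{v^*_{d-i}\}^d_{i=0}$ is upper triangular with $(i,j)$-entry
\[
(\theta^*_i-\theta^*_{j+1})(\theta^*_i-\theta^*_{j+2})\cdots(\theta^*_i-\theta^*_{d})
\qquad (0\leq i \leq j \leq d),
\]
and the second assertion gives the inverse transition matrix in the claimed form. This yields precisely the stated lemma.

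There is no real obstacle here; the only thing to verify carefully is that the duality conventions line up correctly. In particular, that the ordering of the inverted split basis matches the ordering used in Lemma \ref{lem:TM SB and invspB}, and that the $\Phi^*$-standard basis really is $\{E^*_iu\}^d_{i=0}$ and not, say, $\{E^*_{d-i}u\}^d_{i=0}$. Both of these are addressed by Definition \ref{notation epsilon} and the paragraph introducing the six bases in \eqref{six bases}, so once those conventions are acknowledged, the lemma follows immediately by duality.
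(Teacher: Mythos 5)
Your proposal is correct and is exactly the paper's argument: the paper proves this lemma by the one-line statement ``Apply Lemma \ref{lem:TM SB and invspB} to $\Phi^*$.'' Your additional checks of the duality dictionary (eigenvalue sequences swapping, $\{E^*_iu\}^d_{i=0}$ being the $\Phi^*$-standard basis, and the ordering of the inverted split basis) are sound and just make explicit what the paper leaves implicit.
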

\begin{proof}
Apply Lemma \ref{lem:TM SB and invspB} to $\Phi^*$.
\end{proof}

\noindent
We have a comment. 
In Lemmas \ref{lem:TM SB and invspB}--\ref{lem:dual TM SB and invspB}, we found  the transition matrices between any pair of bases that are adjacent in the diagram \eqref{6 bases diagram}.
Using these matrices and linear algebra, one can compute the transition matrix between any pair of bases among the six bases in \eqref{six bases}.

We now describe the matrices representing $A$ and $A^*$ with respect to each basis in \eqref{six bases}.
Recall from Proposition \ref{prop: CHpair mat split} that the matrices representing $A$ and $A^*$ with respect to the $\Phi$-split basis $\{v_i\}^d_{i=0}$ are given in \eqref{ALP split mat}.
Similarly, the matrices representing $A$ and $A^*$ with respect to the $\Phi^*$-split basis $\{v^*_i\}^d_{i=0}$ are given in \eqref{ALP dual split mat}.

\begin{lemma}\label{lem:mat rep invSB}
The matrices representing $A$ and $A^*$ with respect to the inverted $\Phi$-split basis $\{v_{d-i}\}^d_{i=0}$ are 
\begin{equation*}\label{CHP dual split mat}
	A: \quad \begin{pmatrix}
	\theta_0 & 1 & & & & \mathbf{0} \\
	 & \theta_{1} & 1 \\
	 & & \theta_{2} & \cdot \\
	 & & & \cdot & \cdot \\
	 & & & & \cdot & 1\\ 
	\mathbf{0} & & & & & \theta_d
	\end{pmatrix},
	\qquad \qquad 
	A^*: 	\quad \begin{pmatrix}
	\theta^*_d &  & & & & \mathbf{0}\\
	\phi_d & \theta^*_{d-1} &  & \\
	 & \phi_{d-1}& \theta^*_{d-2} & \\
	 & & \cdot & \cdot & &\\
	 & & & \cdot & \cdot & \\
	\mathbf{0} & & & &\phi_1 & \theta^*_0
	\end{pmatrix}.
\end{equation*}
\end{lemma}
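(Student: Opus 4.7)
The plan is to translate the action of $A$ and $A^*$ on the $\Phi$-split basis, recorded in Proposition \ref{prop: CHpair mat split} and display \eqref{ALP split mat}, over to the reversed basis $\{w_i\}_{i=0}^d$ where $w_i := v_{d-i}$. There are two natural routes; I will sketch both, but carry out only the direct one since it is cleaner.

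First route (direct): From \eqref{ALP split mat}, unpack the action as
\begin{align*}
Av_j &= \theta_{d-j}v_j + v_{j+1} \quad (0 \le j \le d-1), & Av_d &= \theta_0 v_d,\\
A^*v_j &= \theta^*_j v_j + \phi_j v_{j-1} \quad (1 \le j \le d), & A^*v_0 &= \theta^*_0 v_0.
\end{align*}
Substituting $j = d-i$ and writing $w_i = v_{d-i}$, the first line becomes $Aw_i = \theta_i w_i + w_{i-1}$ for $1 \le i \le d$ together with $Aw_0 = \theta_0 w_0$, which is exactly the upper bidiagonal matrix claimed for $A$. The second line likewise yields $A^*w_i = \theta^*_{d-i} w_i + \phi_{d-i} w_{i+1}$ for $0 \le i \le d-1$ and $A^*w_d = \theta^*_0 w_d$, which matches the lower bidiagonal matrix claimed for $A^*$ (the subdiagonal entries being $\phi_d, \phi_{d-1}, \ldots, \phi_1$ from top to bottom, and the diagonal being $\theta^*_d, \theta^*_{d-1}, \ldots, \theta^*_0$). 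This completes the verification.

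Second route (change-of-basis): By Lemma \ref{lem:TM spB and invspB} the transition matrix from $\{v_i\}_{i=0}^d$ to $\{v_{d-i}\}_{i=0}^d$ equals the reversal matrix $Z$ from \eqref{eq: Z}, and $Z^2 = I$. Hence if $M$ represents a linear map with respect to $\{v_i\}_{i=0}^d$, then $ZMZ$ represents it with respect to $\{v_{d-i}\}_{i=0}^d$. Conjugation by $Z$ sends the $(i,j)$-entry of $M$ to the $(d-i, d-j)$-entry, i.e.\ it reflects the matrix through its center. Applying this reflection to the two matrices in \eqref{ALP split mat} produces precisely the two matrices in the statement of the lemma.

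There is no real obstacle here; the content is just bookkeeping of indices under the substitution $j \leftrightarrow d-i$. The only minor point to double-check is the direction of the off-diagonal entries, namely that a subdiagonal of 1's for $A$ acting on $\{v_i\}$ becomes a superdiagonal of 1's for $A$ acting on $\{v_{d-i}\}$, and similarly the superdiagonal $(\phi_1,\ldots,\phi_d)$ for $A^*$ on $\{v_i\}$ becomes the subdiagonal $(\phi_d,\ldots,\phi_1)$ for $A^*$ on $\{v_{d-i}\}$. Both observations are immediate from the direct computation above.
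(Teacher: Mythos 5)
Your proposal is correct: the paper's own proof is exactly your second route, namely conjugating the two matrices in \eqref{ALP split mat} by the reversal matrix $Z$ of \eqref{eq: Z} (using Lemma \ref{lem:TM spB and invspB}). Your carried-out direct computation, $Aw_i=\theta_i w_i+w_{i-1}$ and $A^*w_i=\theta^*_{d-i}w_i+\phi_{d-i}w_{i+1}$ with the correct boundary cases, is an equivalent bookkeeping of the same change of basis and matches the stated matrices.
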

\begin{proof}
Conjugate each matrix in \eqref{ALP split mat} by the matrix $Z$ from \eqref{eq: Z}.
\end{proof}

\begin{lemma}
The matrices representing $A$ and $A^*$ with respect to the inverted $\Phi^*$-split basis $\{v^*_{d-i}\}^d_{i=0}$ are 
\begin{equation*}\label{CHP dual split mat}
	A: \quad \begin{pmatrix}
	\theta_d &  & & & & \mathbf{0}\\
	\phi_1 & \theta_{d-1} &  & \\
	 & \phi_{2}& \theta_{d-2} & \\
	 & & \cdot & \cdot & &\\
	 & & & \cdot & \cdot & \\
	\mathbf{0} & & & &\phi_d & \theta_0
	\end{pmatrix},
	\qquad \qquad 
	A^*: 	\quad \begin{pmatrix}
	\theta^*_0 & 1 & & & & \mathbf{0} \\
	 & \theta^*_{1} & 1 \\
	 & & \theta^*_{2} & \cdot \\
	 & & & \cdot & \cdot \\
	 & & & & \cdot & 1\\ 
	\mathbf{0} & & & & & \theta^*_d
	\end{pmatrix}.
\end{equation*}
\end{lemma}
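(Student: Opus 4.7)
The plan is to mimic the one-line proof of Lemma \ref{lem:mat rep invSB}. The starting data is the matrix representation of $A$ and $A^*$ with respect to the (non-inverted) $\Phi^*$-split basis $\{v^*_i\}^d_{i=0}$, which is already recorded in \eqref{ALP dual split mat}. To pass from that basis to the inverted basis $\{v^*_{d-i}\}^d_{i=0}$, I would invoke Lemma \ref{lem:dual TM spB and dual invspB}: the transition matrix in either direction is the reversal matrix $Z$ from \eqref{eq: Z}, which satisfies $Z^2 = I$. Hence the matrices representing $A$ and $A^*$ in the new basis are obtained from those in \eqref{ALP dual split mat} by conjugation by $Z$.

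To justify the mechanics, I would note the general fact that if $\{w_i\}^d_{i=0}$ and $\{w'_i\}^d_{i=0}$ are bases of $V$ related by $w'_j = w_{d-j}$, and if $M\in\MatF$ represents a linear map $X$ with respect to $\{w_i\}^d_{i=0}$, then the matrix representing $X$ with respect to $\{w'_i\}^d_{i=0}$ has $(i,j)$-entry $M_{d-i,d-j}=(ZMZ)_{ij}$. Applying this observation with $w_i=v^*_i$ and $w'_i=v^*_{d-i}$ reduces the lemma to a direct conjugation calculation.

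Finally I would carry out that calculation on each of the two matrices in \eqref{ALP dual split mat}. The matrix representing $A$ there is upper bidiagonal with diagonal entries $\theta_i$ and superdiagonal entries $\phi_{d-i}$; conjugation by $Z$ turns it into a lower bidiagonal matrix whose $(i,i)$-entry is $\theta_{d-i}$ and whose $(i+1,i)$-entry is $\phi_{i+1}$, matching the claimed form. The matrix representing $A^*$ in \eqref{ALP dual split mat} is lower bidiagonal with diagonal entries $\theta^*_{d-i}$ and all subdiagonal entries equal to $1$; conjugation by $Z$ turns it into an upper bidiagonal matrix with diagonal entries $\theta^*_i$ and all superdiagonal entries equal to $1$, as required.

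There is no real obstacle here; the only thing to be careful about is the indexing of the sequences $\{\theta_i\}^d_{i=0}$, $\{\theta^*_i\}^d_{i=0}$, and $\{\phi_i\}^d_{i=1}$ under the transformation $i\mapsto d-i$. The proof can be written in essentially one sentence, exactly parallel to Lemma \ref{lem:mat rep invSB}: conjugate each matrix in \eqref{ALP dual split mat} by the matrix $Z$ from \eqref{eq: Z}.
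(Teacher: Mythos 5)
Your proposal is correct and matches the paper's proof, which reads in its entirety: conjugate each matrix in \eqref{ALP dual split mat} by the matrix $Z$ from \eqref{eq: Z}. Your index bookkeeping (the $(i,i)$-entry $\theta_{d-i}$, $(i+1,i)$-entry $\phi_{i+1}$ for $A$, and diagonal $\theta^*_i$ with superdiagonal $1$ for $A^*$) is accurate, so there is nothing to add.
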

\begin{proof}
Conjugate each matrix in \eqref{ALP dual split mat} by the matrix $Z$ from \eqref{eq: Z}.
\end{proof}

\begin{proposition}\label{prop:mat rep A A* stB}
The matrix representing $A$ with respect to the $\Phi$-standard basis $\{E_iu^*\}^d_{i=0}$ is
\begin{equation}\label{A:SB}
	\mathrm{diag}(\theta_0, \theta_1, \theta_2, \ldots, \theta_d).
\end{equation}
The matrix representing $A^*$ with respect to $\{E_iu^*\}^d_{i=0}$ is circular Hessenberg:
\begin{equation}\label{A*:SB}
	\begin{pmatrix}
	a^*_0 & b^*_0& & &  & \xi^*\\
	c^*_1 & a^*_1& b^*_1 & & &  \\
	& c^*_2 & a^*_2& \cdot & &  \\
	& & \cdot & \cdot & \cdot &  \\
	& & & \cdot & \cdot & b^*_{d-1} \\
	\mathbf{0} & & & & c^*_d & a^*_d  \\
	\end{pmatrix},
\end{equation}
where
\begin{equation}\label{eq: xi*}
	\xi^* = \th^*_0-a^*_0-b^*_0,
\end{equation}
and 
\begin{align}
	c^*_i &= \frac{(\th_i-\th_d)(\th_i-\th_{d-1})\cdots (\th_i-\th_{i+1})}{(\th_{i-1}-\th_d)(\th_{i-1}-\th_{d-1})\cdots (\th_{i-1}-\th_i)}\phi_{d-i+1} \qquad (1\leq i \leq d), \label{scalar:c*i} \\
	a^*_0 & = \th^*_{d} + \frac{\phi_{d}}{\th_0-\th_{1}}, \label{scalar:a*0}\\
	a^*_i & = \th^*_{d-i} + \frac{\phi_{d-i}}{\th_i-\th_{i+1}} + \frac{\phi_{d-i+1}}{\th_i-\th_{i-1}}\qquad (1\leq i \leq d-1), \label{scalar:a*i}\\	
	a^*_d & = \th^*_{0}  + \frac{\phi_{1}}{\th_d-\th_{d-1}},\label{scalar:a*d}\\
	b^*_0 & = \frac{(\th_0-\th_d)(\th_0-\th_{d-1})\cdots (\th_0-\th_{2})}{(\th_{1}-\th_d)(\th_{1}-\th_{d-1})\cdots (\th_{1}-\th_{2})} \times \left( \th^*_{d-1}-\th^*_{d} + \frac{\phi_{d-1}}{\th_0-\th_{2}} + \frac{\phi_{d}}{{\th_{1}-\th_0}} \right), \label{scalar:b*0}\\
		b^*_i & = \frac{(\th_i-\th_d)(\th_i-\th_{d-1})\cdots (\th_i-\th_{i+2})}{(\th_{i+1}-\th_d)(\th_{i+1}-\th_{d-1})\cdots (\th_{i+1}-\th_{i+2})} \nonumber\\
	& \quad \times \left( \th^*_{d-i-1}-\th^*_{d-i} + \frac{\phi_{d-i-1}}{\th_i-\th_{i+2}} + \frac{\phi_{d-i}}{\th_{i+1}-\th_i} + \frac{\phi_{d-i+1}}{\th_{i-1}-\th_{i+1}} \right) \qquad (1\leq i \leq d-2), \label{scalar:b*i} \\
	b^*_{d-1} & = \th^*_{0}-\th^*_{1} + \frac{\phi_{1}}{\th_{d}-\th_{d-1}} + \frac{\phi_{2}}{\th_{d-2}-\th_{d}}. \label{scalar:b*(d-1)}
\end{align}
\end{proposition}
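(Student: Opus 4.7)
The plan is to first derive \eqref{A:SB} and the shape of $B^* \in \MatF$ representing $A^*$ with respect to $\{E_i u^*\}_{i=0}^d$, and then compute the entries one family at a time. Since $E_j u^*$ is a nonzero eigenvector of $A$ with eigenvalue $\th_j$ (nonzero by Lemma~\ref{E0 normalizing}), I have $AE_j u^* = \th_j E_j u^*$, which immediately gives \eqref{A:SB} (equivalently, Lemma~\ref{lem:SB char(2)}(i)). Since $E_iV$ is one-dimensional, applying $E_i$ to $A^*E_j u^* = \sum_k (B^*)_{kj}E_ku^*$ yields $(B^*)_{ij}E_iu^* = E_iA^*E_ju^*$, and Definition~\ref{Def:CHS}(iv) forces $(B^*)_{ij}=0$ whenever $i-j>1$ or $1<j-i<d$, confirming the circular Hessenberg shape in \eqref{A*:SB}.

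For the explicit values I combine the transition matrices of Lemma~\ref{lem:TM SB and invspB} with the representation of $A^*$ on the inverted $\Phi$-split basis from Lemma~\ref{lem:mat rep invSB}. Writing $v_{d-i} = \sum_k P_{ki} E_k u^*$ and $E_j u^* = \sum_\ell H_{\ell j} v_{d-\ell}$ with $P,H=P^{-1}$ the upper triangular matrices from \eqref{eq: mat P}, \eqref{eq: mat Pinv}, and using $A^*v_{d-\ell} = \th^*_{d-\ell}v_{d-\ell}+\phi_{d-\ell}v_{d-\ell-1}$ for $0\le \ell\le d-1$ together with $A^*v_0 = \th^*_0 v_0$, I arrive at
\begin{equation*}
(B^*)_{kj} = \sum_{\ell} H_{\ell j}\bigl[\th^*_{d-\ell}P_{k,\ell}+\phi_{d-\ell}P_{k,\ell+1}\bigr],
\end{equation*}
with the convention that the $\ell=d$ contribution is $H_{dj}\,\th^*_0\,P_{k,d}$. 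The upper triangularity of $P$ and $H$ restricts the effective range of $\ell$ to a handful of values. For $c^*_i = (B^*)_{i,i-1}$ only $\ell=i-1$ contributes, producing \eqref{scalar:c*i} after a one-line simplification. For $a^*_j$ only $\ell \in \{j-1, j\}$ contribute (with the extremes $j=0$ and $j=d$ dropping one of the two terms), and the result collapses via $H_{j,j}P_{j,j}=1$, $H_{j,j}P_{j,j+1}=(\th_j-\th_{j+1})^{-1}$, $H_{j-1,j}P_{j,j}=(\th_j-\th_{j-1})^{-1}$ to \eqref{scalar:a*0}--\eqref{scalar:a*d}.

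For $b^*_j$ with $1\le j\le d-2$, three indices $\ell\in\{j-1,j,j+1\}$ contribute; pulling out the common ratio
\begin{equation*}
D_j = \frac{\prod_{\ell=j+2}^{d}(\th_j-\th_\ell)}{\prod_{\ell=j+2}^{d}(\th_{j+1}-\th_\ell)}
\end{equation*}
from every term and using $(\th_j-\th_{j+1})/(\th_{j+1}-\th_j)=-1$ turns the $\th^*_{d-j}$-piece from the $\ell=j$ contribution into $-\th^*_{d-j}D_j$, which merges with the $\th^*_{d-j-1}D_j$ piece coming from the $\ell=j+1$ contribution to produce the difference $\th^*_{d-j-1}-\th^*_{d-j}$ visible in \eqref{scalar:b*i}. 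The boundary cases \eqref{scalar:b*0} and \eqref{scalar:b*(d-1)} follow by the same template with contracted index sets: no $\ell=-1$ term for $b^*_0$, and for $b^*_{d-1}$ the special $\ell=d$ clause contributes $\th^*_0$ in place of a $\phi$-term. Finally, $\xi^*=(B^*)_{0,d}$ is read off from Lemma~\ref{lem:SB char(2)}(ii): $B^*$ has constant row sum $\th^*_0$, and since the only nonzero entries in row $0$ are $a^*_0,\,b^*_0,\,\xi^*$, we get $\xi^*=\th^*_0-a^*_0-b^*_0$, which is \eqref{eq: xi*}. The main obstacle is the three-term bookkeeping for $b^*_j$: identifying the common factor $D_j$, handling the three boundary regimes ($j=0$, $1\le j\le d-2$, $j=d-1$) consistently, and tracking the sign flips from $(\th_j-\th_{j+1})=-(\th_{j+1}-\th_j)$ that generate the dual-eigenvalue differences inside each parenthesized factor; once $D_j$ is extracted, the remaining simplifications are mechanical, and the row-sum identification for $\xi^*$ avoids what would otherwise be a long fourth computation.
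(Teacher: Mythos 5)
Your proposal is correct and follows essentially the same route as the paper: it establishes the diagonal form and the circular Hessenberg shape via Lemma \ref{lem:SB char(2)} and Definition \ref{Def:CHS}, obtains $\xi^*$ from the constant row sum, and computes $c^*_i$, $a^*_i$, $b^*_i$ by conjugating the bidiagonal matrix of Lemma \ref{lem:mat rep invSB} by the transition matrix $P$ of Lemma \ref{lem:TM SB and invspB} (your expansion of $B^*=PCP^{-1}$ is the same computation as the paper's entrywise comparison of $B^*P=PC$). The only cosmetic difference is that you invoke the explicit inverse $H=P^{-1}$ rather than solving the equations $B^*P=PC$ entry by entry, and you could cite the remark before Lemma \ref{lem:SB char(1)} (via \cite[Lemma 8.1]{2010GodjLAA}) rather than Lemma \ref{E0 normalizing} for $E_iu^*\ne 0$; neither affects correctness.
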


\begin{proof}
The first assertion follows from Lemma \ref{lem:SB char(2)}(i).
For the second assertion, let $B^*$ denote the matrix representing $A^*$ with respect to $\{E_iu^*\}^d_{i=0}$.
By Definition \ref{Def:CHS}(v), the matrix $B^*$ has the circular Hessenberg form \eqref{A*:SB}.
By Lemma \ref{lem:SB char(2)}(ii), we have \eqref{eq: xi*}.
We now show \eqref{scalar:c*i}--\eqref{scalar:b*(d-1)}.
Recall the transition matrix from $\{E_i u^*\}^d_{i=0}$ to $\{v_{d-i}\}^d_{i=0}$ from Lemma \ref{lem:TM SB and invspB}.
We denote this matrix by $P$.
Recall the matrix representing $A^*$ with respect to $\{v_{d-i}\}^d_{i=0}$ from Lemma \ref{lem:mat rep invSB}.
We denote this matrix by $C$.
By linear algebra,
\begin{equation}\label{eq(1):BP=PC}
	B^* P =  P C.
\end{equation}
For $1 \leq i \leq d$, evaluate the $(i, i-1)$-entry of both sides of \eqref{eq(1):BP=PC} to find 
\begin{equation}\label{eq(2):pf A*}
	c^*_i P_{i-1, i-1}  = \phi_{d-i+1} P_{i,i}.
\end{equation}
Solve the equation \eqref{eq(2):pf A*} for $c^*_i$ and simplify the result to get \eqref{scalar:c*i}.
Next, for $0 \leq i \leq d$ evaluate the $(i,i)$-entry of both sides of \eqref{eq(1):BP=PC} to find 
\begin{equation}\label{eq(3):pf A*}
	a^*_i  P_{i,i} + c^*_i  P_{i-1,i} =  \th^*_{d-i}  P_{i, i} + \phi_{d-i} P_{i,i+1}.
\end{equation}
Solve the equation \eqref{eq(3):pf A*} for $a^*_i$ and simplify the result to get \eqref{scalar:a*0}--\eqref{scalar:a*d}.
Next, for $0 \leq i \leq d-1$ evaluate the $(i, i+1)$-entry of both sides of \eqref{eq(1):BP=PC} to find 
\begin{equation}\label{eq(4):pf A*}
	c^*_i P_{i-1,i+1} + a^*_i P_{i,i+1} + b^*_i  P_{i+1,i+1}  =  \th^*_{d-i-1}  P_{i,i+1} + \phi_{d-i-1} P_{i,i+2}.
\end{equation}
Solve the equation \eqref{eq(4):pf A*} for $b^*_i$ and simplify the result to get \eqref{scalar:b*0}--\eqref{scalar:b*(d-1)}.
The result follows.
\end{proof}

\begin{proposition}\label{prop:mat rep A A* dualstB}
The matrix representing $A^*$ with respect to the $\Phi^*$-standard basis $\{E^*_iu\}^d_{i=0}$ is
\begin{equation}
	\mathrm{diag}(\theta^*_0, \theta^*_1, \theta^*_2, \ldots, \theta^*_d).
\end{equation}
The matrix representing $A$ with respect to $\{E^*_iu\}^d_{i=0}$ is circular Hessenberg:
\begin{equation}\label{A:SB Phi*}
	\begin{pmatrix}
	a_0 & b_0 & & & \xi\\
	c_1 & a_1 & b_1 & \\
	& c_2 & a_2 & \ddots \\
	& & \ddots & \ddots & b_{d-1}\\
	{\bf 0}& & & c_d & a_d
	\end{pmatrix},
\end{equation}
where 
\begin{equation}\label{xi}
	\xi = \th_0-a_0-b_0,
\end{equation}
and 
\begin{align}
	c_i &= \frac{(\th^*_i-\th^*_d)(\th^*_i-\th^*_{d-1})\cdots (\th^*_i-\th^*_{i+1})}{(\th^*_{i-1}-\th^*_d)(\th^*_{i-1}-\th^*_{d-1})\cdots (\th^*_{i-1}-\th^*_i)}\phi_{i} \qquad (1\leq i \leq d), \label{scalar:ci}\\
	a_0 & = \th_{d} + \frac{\phi_{1}}{\th^*_0-\th^*_{1}},\label{scalar:a0}\\
	a_i & = \th_{d-i} + \frac{\phi_{i+1}}{\th^*_i-\th^*_{i+1}} + \frac{\phi_{i}}{\th^*_i-\th^*_{i-1}} \qquad (1\leq i \leq d-1),\label{scalar:ai}\\
	a_d & = \th_{0} + \frac{\phi_{d}}{\th^*_d-\th^*_{d-1}},\label{scalar:ad}\\
	b_0 & = \frac{(\th^*_0-\th^*_d)(\th^*_0-\th^*_{d-1})\cdots (\th^*_0-\th^*_{2})}{(\th^*_{1}-\th^*_d)(\th^*_{1}-\th^*_{d-1})\cdots (\th^*_{1}-\th^*_{2})} \times \left( \th_{d-1}-\th_{d} + \frac{\phi_{2}}{\th^*_0-\th^*_{2}} + \frac{\phi_{1}}{\th^*_{1}-\th^*_0}  \right) \label{scalar:b0}\\
	b_i & = \frac{(\th^*_i-\th^*_d)(\th^*_i-\th^*_{d-1})\cdots (\th^*_i-\th^*_{i+2})}{(\th^*_{i+1}-\th^*_d)(\th^*_{i+1}-\th^*_{d-1})\cdots (\th^*_{i+1}-\th^*_{i+2})} \nonumber\\
	& \quad \times \left( \th_{d-i-1}-\th_{d-i} + \frac{\phi_{i+2}}{\th^*_i-\th^*_{i+2}} + \frac{\phi_{i+1}}{\th^*_{i+1}-\th^*_i} + \frac{\phi_{i}}{\th^*_{i-1}-\th^*_{i+1}} \right) \qquad (1\leq i \leq d-2), \label{scalar:bi}\\	
	b_{d-1} & = \th_{0}-\th_{1} + \frac{\phi_{d}}{\th^*_{d}-\th^*_{d-1}} + \frac{\phi_{d-1}}{\th^*_{d-2}-\th^*_{d}}.\label{scalar:b(d-1)}
\end{align}
\end{proposition}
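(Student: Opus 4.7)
My plan is to derive Proposition \ref{prop:mat rep A A* dualstB} from Proposition \ref{prop:mat rep A A* stB} by applying the latter to the dual CH system $\Phi^* = (A^*; \{E^*_i\}^d_{i=0}; A; \{E_i\}^d_{i=0})$. The idea is that all the ingredients of Proposition \ref{prop:mat rep A A* stB} (the standard basis and the matrix entries for the $A^*$-action on it) have direct analogues for $\Phi^*$, so dualizing reduces the statement to bookkeeping.

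First I would invoke the lemma immediately after the definition of the dual CH system to record that $\Phi^*$ has eigenvalue sequence $\{\theta^*_i\}^d_{i=0}$, dual eigenvalue sequence $\{\theta_i\}^d_{i=0}$, and split sequence $\{\phi_{d-i+1}\}^d_{i=1}$. By definition, the $\Phi^*$-standard basis is $\{E^*_iu\}^d_{i=0}$, where $u$ is the fixed nonzero vector of $E_0V$. Applying Lemma \ref{lem:SB char(2)} to $\Phi^*$ yields the first assertion: the matrix representing $A^*$ with respect to $\{E^*_iu\}^d_{i=0}$ is $\mathrm{diag}(\theta^*_0,\ldots,\theta^*_d)$. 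Applying Definition \ref{Def:CHS}(v) (or equivalently Lemma \ref{lem: vanishing prod terms(1)}(ii) together with the $(0,d)$ entry being nonzero) guarantees that the matrix representing $A$ with respect to $\{E^*_iu\}^d_{i=0}$ has the circular Hessenberg shape \eqref{A:SB Phi*}, and Lemma \ref{lem:SB char(2)}(ii) applied to $\Phi^*$ gives the constant row sum $\theta_0$, which after extracting the top row yields \eqref{xi}.

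For the explicit entries \eqref{scalar:ci}--\eqref{scalar:b(d-1)}, I would apply Proposition \ref{prop:mat rep A A* stB} to $\Phi^*$ and then translate under the dictionary $\theta_i \leftrightarrow \theta^*_i$ and $\phi_j \mapsto \phi_{d-j+1}$. For instance, the factor $\phi_{d-i+1}$ in \eqref{scalar:c*i} becomes $\phi_{d-(d-i+1)+1}=\phi_i$, reproducing \eqref{scalar:ci}; similarly the shifts $\phi_{d-i}\mapsto \phi_{i+1}$ and $\phi_{d-i+1}\mapsto \phi_i$ convert \eqref{scalar:a*i} into \eqref{scalar:ai}, and the shifts $\phi_{d-i-1}\mapsto \phi_{i+2}$, $\phi_{d-i}\mapsto \phi_{i+1}$, $\phi_{d-i+1}\mapsto \phi_i$ convert \eqref{scalar:b*i} into \eqref{scalar:bi}. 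The boundary cases \eqref{scalar:a0}, \eqref{scalar:ad}, \eqref{scalar:b0}, \eqref{scalar:b(d-1)} follow from the corresponding boundary formulas in Proposition \ref{prop:mat rep A A* stB} in the same way.

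I do not expect a real obstacle; the only substantive task is the index bookkeeping under the split-sequence substitution $\phi_j \mapsto \phi_{d-j+1}$, which must be checked uniformly for the formulas involving two or three $\phi$'s (namely \eqref{scalar:b0}--\eqref{scalar:b(d-1)}). As a sanity check, one could alternatively imitate the proof of Proposition \ref{prop:mat rep A A* stB} verbatim using the $\Phi^*$-analogues of the transition matrix in Lemma \ref{lem:TM SB and invspB} and the representing matrix in Lemma \ref{lem:mat rep invSB}, comparing $(i,i-1)$, $(i,i)$, and $(i,i+1)$ entries of $BP = PC$; this provides an independent verification but is essentially the same computation written out twice.
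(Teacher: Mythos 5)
Your proposal is correct and matches the paper's own proof, which is exactly the one-line argument ``apply Proposition \ref{prop:mat rep A A* stB} to $\Phi^*$''; your index bookkeeping under $\theta_i \leftrightarrow \theta^*_i$ and $\phi_j \mapsto \phi_{d-j+1}$ is accurate and simply makes explicit what the paper leaves implicit.
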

\begin{proof}
Apply Proposition \ref{prop:mat rep A A* stB} to $\Phi^*$.
\end{proof}

%%%%%%%%%%%%%%%%%%%%%%%%%%%%%%%%%%%%%%%%%%
%%%%%%%%%%%%%%%%%%%%%%%%%%%%%%%%%%%%%%%%%%
%%%%%%%%%%%%%%%%%%%%%%%%%%%%%%%%%%%%%%%%%%
\section{The scalars $\xi$ and $\xi^*$}\label{sec:scalar_xi}
Throughout this section, let $\Phi=(A; \{E_i\}^d_{i=0}; A^*; \{E^*_i\}^d_{i=0})$ denote a recurrent CH system over $\mathbb{F}$.
Let $(\{\theta_i\}^d_{i=0}, \{\theta^*_i\}^d_{i=0}, \{\phi_i\}^d_{i=1})$ denote the parameter array of $\Phi$.
In Propositions \ref{prop:mat rep A A* stB} and \ref{prop:mat rep A A* dualstB}, we encountered the scalars $\xi$ and $\xi^*$. 
Our goal for this section is to obtain a formula for $\xi$ and $\xi^*$.
We give two versions; these are Proposition \ref{formula: xi, xi*} and Corollary \ref{cor:xi xi*}.
\begin{lemma} \label{psi=1}
We have
\begin{equation}\label{scalar psi}
	\prod^{d-1}_{i=2} \frac{\theta_0 - \theta_{i+1}}{\theta_1-\theta_i}=1,
	\qquad \qquad
	\prod^{d-1}_{i=2} \frac{\theta^*_0 - \theta^*_{i+1}}{\theta^*_1-\theta^*_i}=1.
\end{equation}
\end{lemma}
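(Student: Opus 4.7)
The plan is to reduce to the four explicit families of recurrent CH systems classified in Theorem \ref{thm:four families}, and then verify the identity by direct computation within each family. Since $\Phi$ is recurrent, Theorem \ref{thm:four families} gives that $\Phi$ is isomorphic to a CH system from one of Examples \ref{ex:beta not 2, -2}--\ref{ex:beta = 0}. The products in \eqref{scalar psi} depend only on the eigenvalue and dual eigenvalue sequences of $\Phi$, both of which belong to the parameter array and hence are preserved under isomorphism by Lemma \ref{iso_CHsystem}. Moreover, the dual $\Phi^*$ of a recurrent CH system is again recurrent (the sequence $\{\vartheta^*_i\}$ associated with $\Phi^*$ equals $\{\vartheta_{d-i+1}\}$ and so remains $\beta$-recurrent) with the roles of $\{\theta_i\}$ and $\{\theta^*_i\}$ interchanged, so the second identity follows from the first applied to $\Phi^*$. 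Thus it suffices to prove the first identity, case by case, using the explicit closed form of $\theta_i$ from the relevant example.

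For $\beta\ne\pm 2$ (Example \ref{ex:beta not 2, -2}), writing $\theta_i = a+bq^i+cq^{-i}$, I will factor
\begin{equation*}
\theta_0-\theta_{i+1} = (1-q^{i+1})(b-cq^{-i-1}), \qquad \theta_1-\theta_i = q(1-q^{i-1})(b-cq^{-i-1}).
\end{equation*}
The common factor $b-cq^{-i-1}$ cancels in the ratio, so the product collapses to a telescoping expression whose value simplifies to $1$ using $q^{d+1}=1$. For $\beta=2$ (Example \ref{ex:beta = 2}), writing $\theta_i = a+bi+ci(i-1)/2$ gives $\theta_0-\theta_{i+1} = -(i+1)(b+ci/2)$ and $\theta_1-\theta_i = -(i-1)(b+ci/2)$; after cancellation the telescoping product equals $d(d-1)/2$, which is $1$ since $d+1=0$ in $\mathbb{F}$ reduces $d(d-1) = (-1)(-2) = 2$. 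For $\beta=-2$ (Example \ref{ex:beta = -2}), with $\theta_i = a+b(-1)^i+ci(-1)^i$ the ratio $(\theta_0-\theta_{i+1})/(\theta_1-\theta_i)$ splits by the parity of $i$: for odd $i$ it equals $-(i+1)/(i-1)$ and for even $i$ it equals $-1$. Counting the two kinds of indices in $\{2,\ldots,d-1\}$ (using that $d$ is odd) and simplifying with $\operatorname{Char}(\mathbb{F})=(d+1)/2$ yields the value $1$. Finally, for $\beta=0$, $d=3$ (Example \ref{ex:beta = 0}) the product contains a single factor at $i=2$; from the table $\theta_0=a$, $\theta_1=a+b$, $\theta_2=a+c$, $\theta_3=a+b+c$ in characteristic $2$, one reads off $\theta_0-\theta_3 = b+c = \theta_1-\theta_2$, giving ratio $1$.

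The hardest step will be the bookkeeping in case $\beta=-2$, where the parity split and the modular arithmetic imposed by $\operatorname{Char}(\mathbb{F})=(d+1)/2$ must be combined carefully; elsewhere the computations are routine telescoping products. No new conceptual ingredient is required beyond Theorem \ref{thm:four families} and the explicit closed forms of the eigenvalues.
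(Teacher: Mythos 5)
Your proposal is correct, and its computational core matches the paper's: the same four-way case split on $\beta$, a closed-form evaluation of each factor $(\theta_0-\theta_{i+1})/(\theta_1-\theta_i)$, and a telescoping product that collapses to $1$ using $q^{d+1}=1$ (resp.\ $\operatorname{Char}(\mathbb{F})=d+1$, $\operatorname{Char}(\mathbb{F})=(d+1)/2$ with $d$ odd, and $d=3$ in characteristic $2$). The difference is in what you invoke to get the closed forms: you route through Theorem \ref{thm:four families} and Lemma \ref{iso_CHsystem}, replacing $\Phi$ by an isomorphic member of Examples \ref{ex:beta not 2, -2}--\ref{ex:beta = 0} and computing with the explicit eigenvalue formulas there, whereas the paper applies Lemma \ref{lem:rec quotient} directly to the $\beta$-recurrent sequence $\{\theta_i\}^d_{i=0}$ (each factor has index sum $0+(i+1)=1+i$, so that lemma applies) and then quotes the constraints $q^{d+1}=1$, etc., established inside the proof of Theorem \ref{thm:four families}. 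So both arguments ultimately rest on the classification analysis; yours invokes it wholesale, the paper's only the specific constraints, making the paper's version slightly lighter while yours is uniform with how the paper later proves Proposition \ref{formula: xi, xi*}. Your handling of the dual identity, via $\vartheta^*_i=\vartheta_{d-i+1}$ and the invariance of $\beta$-recurrence under reversal, is correct and in fact more explicit than the paper, which simply applies the first identity to $\Phi^*$. Your case computations check out: for $\beta\ne\pm2$ the product is $(1-q^{d-1})(1-q^{d})/\bigl(q^{d-2}(1-q)(1-q^{2})\bigr)=q^{-(d+1)}=1$; for $\beta=2$ it is $d(d-1)/2=1$; for $\beta=-2$ the parity count gives $-(d-1)/2=1$; for $\beta=0$, $d=3$ the single factor is $(b+c)/(b+c)=1$ in characteristic $2$ (and the common factors you cancel are nonzero, being forced by the distinctness of the eigenvalues and the hypotheses of the Examples).
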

\begin{proof}
We first prove the equation on the left in \eqref{scalar psi}.
In this equation, let $\psi$ denote the product on the left.
Since $\Phi$ is recurrent, there exists $\beta\in \mathbb{F}$ such that the sequence $\{\theta_i\}^d_{i=0}$ is $\beta$-recurrent.
We divide the argument into four cases: (i) $\beta\ne\pm2$, (ii) $\beta=2$ and $\operatorname{Char}(\mathbb{F})\ne 2$, (iii) $\beta=-2$ and $\operatorname{Char}(\mathbb{F})\ne 2$, (iv) $\beta=0$ and $\operatorname{Char}(\mathbb{F}) = 2$.
In case (i), choose $0\ne q \in \overline{\mathbb{F}}$ such that $q+q^{-1}=\beta$.
Apply Lemma \ref{lem:rec quotient}(i) to $\psi$ and evaluate the result to get
\begin{equation}\label{psi q-term}
	\psi = \frac{(1-q^d)(1-q^{d-1})}{q^{d-2}(1-q)(1-q^2)}.
\end{equation}
We saw below \eqref{eq(2):beta ne pm 2} that $q^{d+1}=1$.
Using this fact, simplify the right-hand side of \eqref{psi q-term} to get $\psi=1$, as desired.
We have now proved the argument for case (i).
The remaining cases (ii)--(iv) are proved in a similar fashion.
We have proved the equation on the left in \eqref{scalar psi}.
Apply this result to $\Phi^*$ to get the equation on the right in \eqref{scalar psi}.
\end{proof}

\begin{proposition}\label{formula: xi, xi*}
Recall the scalars $\xi$ from \eqref{A:SB Phi*} and $\xi^*$ from \eqref{A*:SB}.
Then both
\begin{align}
	\xi & = \frac{\phi_1 -\phi_d}{\theta^*_1-\theta^*_d} + \frac{ (\theta_1-\theta_0)(\theta^*_d-\theta^*_0) - (\theta_d-\theta_0)(\theta^*_1-\theta^*_0) }{\theta^*_1-\theta^*_d},
	\label{eq:xi}\\
	\xi^* & = \frac{\phi_d -\phi_1}{\theta_1-\theta_d} + \frac{ (\theta^*_1-\theta^*_0)(\theta_d-\theta_0) -(\theta^*_d-\theta^*_0)(\theta_1-\theta_0) }{\theta_1-\theta_d}.
	\label{eq:xi*}
\end{align}
\end{proposition}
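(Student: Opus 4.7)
The plan is to prove \eqref{eq:xi*} first and then obtain \eqref{eq:xi} by applying the result to the dual system $\Phi^*$. The starting point is the identity $\xi^* = \theta^*_0 - a^*_0 - b^*_0$ from \eqref{eq: xi*}. The intermediate target I aim for is $\xi^*(\theta_1 - \theta_d) = \vartheta_d - \vartheta_1$; once this is established, expanding $\vartheta_1$ and $\vartheta_d$ via \eqref{vartheta} immediately produces the right-hand side of \eqref{eq:xi*}.

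First I would simplify the lengthy rational coefficient
\[
\frac{(\theta_0-\theta_d)(\theta_0-\theta_{d-1})\cdots(\theta_0-\theta_2)}{(\theta_1-\theta_d)(\theta_1-\theta_{d-1})\cdots(\theta_1-\theta_2)}
\]
appearing in the formula \eqref{scalar:b*0} for $b^*_0$. By Lemma \ref{psi=1} most factors cancel and this coefficient collapses to $(\theta_0-\theta_2)/(\theta_1-\theta_d)$. Substituting the simplified $b^*_0$ together with $a^*_0$ from \eqref{scalar:a*0} into $\xi^* = \theta^*_0 - a^*_0 - b^*_0$, and multiplying through by $\theta_1-\theta_d$, gives a compact expression for $\xi^*(\theta_1-\theta_d)$ that involves only the scalars $\theta_0,\theta_1,\theta_2,\theta_d,\theta^*_0,\theta^*_{d-1},\theta^*_d,\phi_{d-1},\phi_d$.

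Next I would eliminate $\phi_{d-1}$ and $\phi_d$ by substituting the relations $\phi_{d-1} = \vartheta_{d-1} + (\theta^*_{d-1}-\theta^*_0)(\theta_2-\theta_0)$ and $\phi_d = \vartheta_d + (\theta^*_d-\theta^*_0)(\theta_1-\theta_0)$ from \eqref{vartheta}. A short but pleasant rearrangement shows that every term involving $\theta^*_{d-1}$ cancels, and the whole argument reduces to the single identity
\[
(\theta_d - \theta_2)\,\vartheta_d \;=\; (\theta_1 - \theta_0)\,(\vartheta_1 - \vartheta_{d-1}).
\]
This is the crux of the proof. I plan to verify it case-by-case using the classification in Theorem \ref{thm:four families}: in each of the four recurrent families listed in Examples \ref{ex:beta not 2, -2}--\ref{ex:beta = 0} the sequence $\{\theta_i\}^d_{i=0}$ has an explicit closed form, while the lemma at the end of Section \ref{sec:4 families} provides an explicit expression for $\vartheta_{d-1}$ as a linear combination of $\vartheta_1$ and $\vartheta_d$; substituting these closed forms into both sides reduces matters to a short algebraic verification in each case.

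Finally, \eqref{eq:xi} will follow by applying \eqref{eq:xi*} to the dual system $\Phi^*$, whose parameter array is $(\{\theta^*_i\}^d_{i=0},\{\theta_i\}^d_{i=0},\{\phi_{d-i+1}\}^d_{i=1})$; the quantity $\xi^*$ computed for $\Phi^*$ coincides with $\xi$ for $\Phi$ because the $\Phi^*$-standard basis plays the role of the standard basis relative to $\Phi^*$. The main obstacle is the displayed identity above: a uniform, case-free proof appears awkward because the four recurrent families have genuinely different closed forms for $\vartheta$ and for $\theta$, so case analysis via Theorem \ref{thm:four families} seems to be the cleanest route; the remaining manipulations are entirely mechanical.
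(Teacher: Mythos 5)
Your proposal is correct and follows essentially the paper's own route: the paper likewise uses Lemma \ref{psi=1} to collapse the product coefficient in the corner-entry formula (working with $\xi$, $a_0$, $b_0$ rather than $\xi^*$, $a^*_0$, $b^*_0$), reduces $\xi=\theta_0-a_0-b_0$ to an identity that is then verified case-by-case against the closed forms of the four recurrent families in Examples \ref{ex:beta not 2, -2}--\ref{ex:beta = 0}, and obtains the remaining formula by duality. The only difference is cosmetic: your extra elimination via \eqref{vartheta} (which does cancel all $\theta^*_{d-1}$ terms) condenses the case check to the single identity $(\theta_d-\theta_2)\vartheta_d=(\theta_1-\theta_0)(\vartheta_1-\vartheta_{d-1})$, which indeed holds in each of the four families.
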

\begin{proof}
We first prove \eqref{eq:xi}.
Evaluate $b_0$ in \eqref{scalar:b0} using the equation on the right in \eqref{scalar psi} to get
\begin{equation}\label{simple form b0}
	b_0 = \frac{\theta^*_0-\theta^*_2}{\theta^*_1-\theta^*_d} \left( \th_{d-1}-\th_{d} + \frac{\phi_{2}}{\th^*_0-\th^*_{2}} + \frac{\phi_{1}}{\th^*_{1}-\th^*_0}  \right).
\end{equation}
Recall from \eqref{xi} that $\xi= \theta_0-a_0-b_0$.
Eliminate $a_0$ and $b_0$ in this equation using \eqref{scalar:a0} and \eqref{simple form b0} to get
\begin{equation}\label{scalar:xi}
	\xi = \th_0-\th_d - \frac{\phi_1}{\th^*_0-\th^*_1} - \frac{\theta^*_0-\theta^*_2}{\theta^*_1-\theta^*_d} \left( \th_{d-1}-\th_d + \frac{\phi_{1}}{\th^*_1-\th^*_0} + \frac{\phi_2}{\th^*_0-\th^*_2} \right).
\end{equation}
Consider the quantity that is the right-hand side of \eqref{eq:xi} minus the right-hand side of \eqref{scalar:xi}.
Evaluate this quantity using the closed forms for the scalars $\{\theta_i\}^d_{i=0}$, $\{\theta^*_i\}^d_{i=0}$, $\{\phi_i\}^d_{i=1}$ that are presented in Examples \ref{ex:beta not 2, -2}--\ref{ex:beta = 0}.
For each example, the above quantity is zero.
We have proved \eqref{eq:xi}.
To prove \eqref{eq:xi*}, apply \eqref{eq:xi} to $\Phi^*$.
\end{proof}

\begin{corollary}\label{cor:xi xi*}
Recall the scalars $\{\vartheta_i\}^{d+1}_{i=0}$ from \eqref{vartheta}.
We have
\begin{equation}\label{xi and xi*}
	\xi = \frac{\vartheta_1-\vartheta_d}{\theta^*_1-\theta^*_d}, \qquad \qquad
	\xi^* = \frac{\vartheta_d-\vartheta_1}{\theta_1-\theta_d}.
\end{equation}
\end{corollary}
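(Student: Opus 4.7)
The plan is to derive the corollary as an immediate algebraic consequence of Proposition \ref{formula: xi, xi*} combined with the defining formula \eqref{vartheta} for $\vartheta_i$. No new conceptual ingredient seems to be needed; the work is entirely in matching two expressions.

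First I would specialize the definition \eqref{vartheta} to $i=1$ and $i=d$. This gives
\begin{align*}
\vartheta_1 &= \phi_1 - (\theta^*_1-\theta^*_0)(\theta_d-\theta_0),\\
\vartheta_d &= \phi_d - (\theta^*_d-\theta^*_0)(\theta_1-\theta_0).
\end{align*}
Subtracting, one obtains
\begin{equation*}
\vartheta_1 - \vartheta_d = (\phi_1 - \phi_d) + (\theta^*_d-\theta^*_0)(\theta_1-\theta_0) - (\theta^*_1-\theta^*_0)(\theta_d-\theta_0),
\end{equation*}
which, after dividing by $\theta^*_1-\theta^*_d$ (nonzero since the dual eigenvalues are mutually distinct), reproduces exactly the right-hand side of \eqref{eq:xi}. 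Invoking Proposition \ref{formula: xi, xi*} then yields the first equality in \eqref{xi and xi*}.

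For the second equality, I would either repeat the same calculation but divide by $\theta_1 - \theta_d$ and compare with \eqref{eq:xi*}, or simply pass to the dual system $\Phi^*$. Under the dualization, the roles of the eigenvalue and dual eigenvalue sequences swap, and the split sequence $\{\phi_i\}_{i=1}^d$ is replaced by $\{\phi_{d-i+1}\}_{i=1}^d$, so the analogue of $\vartheta_i$ for $\Phi^*$ becomes $\vartheta_{d-i+1}$. The first formula applied to $\Phi^*$ therefore gives $\xi^* = (\vartheta_d - \vartheta_1)/(\theta_1-\theta_d)$, as required.

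The only potential obstacle is checking that $\theta^*_1 \ne \theta^*_d$ and $\theta_1 \ne \theta_d$ so that the denominators are nonzero; this is immediate from Notation \ref{notation:scalars}(i). Everything else is a one-line algebraic rearrangement, so the whole proof is short and uses only \eqref{vartheta} and Proposition \ref{formula: xi, xi*}.
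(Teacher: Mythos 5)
Your proposal is correct and matches the paper's own proof, which likewise eliminates $\phi_1$ and $\phi_d$ from \eqref{eq:xi} and \eqref{eq:xi*} using \eqref{vartheta} and simplifies; your computation of $\vartheta_1-\vartheta_d$ is just that elimination written out explicitly. The dualization remark for $\xi^*$ is a harmless variant, and the nonvanishing of the denominators is indeed immediate from the distinctness of the (dual) eigenvalues.
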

\begin{proof}
In \eqref{eq:xi} and \eqref{eq:xi*}, eliminate $\phi_1$ and $\phi_d$ using \eqref{vartheta} and simplify the result.
\end{proof}
\begin{note}
Referring to \eqref{xi and xi*}, $\xi$ and $\xi^*$ are nonzero by Lemma \ref{lem:vth1=vthd}.
\end{note}

\section{Appendix}\label{appendix}
In this appendix, we recall some formulas involving recurrent sequences that are used in the main body of the paper.
Throughout this appendix, let $\beta$ denote any scalar in $\mathbb{F}$ and let $\{\theta_i\}^d_{i=0}$ denote an arbitrary sequence of scalars taken from $\mathbb{F}$.

\begin{definition}[{\cite[Definition 8.2]{2001TerLAA}}]\label{def:beta-rec}
%Let $\beta$ denote a scalar in $\mathbb{F}$.
The sequence $\{\theta_i\}^d_{i=0}$ is said to be \emph{$\beta$-recurrent} whenever 
\begin{equation*}
	\theta_{i-2} - (\beta+1)\theta_{i-1}+(\beta+1)\theta_{i} - \theta_{i+1}=0
\end{equation*}
for $2\leq i \leq d-1$.
\end{definition}

%We begin by recalling how a $\beta$-recurrent sequence looks in a closed form.

\begin{lemma}[cf. {\cite[Lemma 9.2]{2001TerLAA}}]\label{lem:rec seq q-form}
%Let $\beta$ denote a scalar in $\mathbb{F}$.
Assume that the sequence $\{\theta_i\}^d_{i=0}$ is $\beta$-recurrent.
Then the following {\rm (i)}--{\rm(iv)} hold.
\begin{enumerate}[\normalfont(i)]
	\item Suppose that $\beta \ne \pm 2$ and choose $0 \ne q \in \overline{\mathbb{F}}$ such that $q+q^{-1}=\beta$. 
	Then there exist scalars $\alpha_1, \alpha_2, \alpha_3$ in $\overline{\mathbb{F}}$ such that
	\begin{equation*}
		\theta_i = \alpha_1 + \alpha_2q^i + \alpha_3q^{-i}, \qquad 0\leq i \leq d.
	\end{equation*}
	\item Suppose $\beta=2$ and $\operatorname{Char}(\mathbb{F})\ne 2$. 
	Then there exist scalars $\alpha_1, \alpha_2, \alpha_3$ in $\mathbb{F}$ such that
	\begin{equation*}
		\theta_i = \alpha_1 + \alpha_2 i + \alpha_3 i(i-1)/2, \qquad 0\leq i \leq d.
	\end{equation*}
	\item Suppose $\beta=-2$ and $\operatorname{Char}(\mathbb{F})\ne 2$.
	Then there exist scalars $\alpha_1, \alpha_2, \alpha_3$ in $\mathbb{F}$ such that
	\begin{equation*}
		\theta_i = \alpha_1 + \alpha_2 (-1)^i + \alpha_3 i(-1)^i, \qquad 0\leq i \leq d.
	\end{equation*}
	\item Suppose $\beta=0$ and $\operatorname{Char}(\mathbb{F})= 2$. Then there exists $\alpha_1, \alpha_2, \alpha_3$ in $\mathbb{F}$ such that
	\begin{equation*}
		\theta_i = \alpha_1 + \alpha_2 i + \alpha_3 \binom{i}{2}, \qquad 0\leq i \leq d, 
	\end{equation*}
	where we interpret the binomial coefficient as follows:
	\begin{equation*}%\label{eq:bin char=2}
	\binom{i}{2} = \begin{cases}
	0 &  \text{if } \quad i=0 \text{ or } i=1 \quad (\mathrm{mod}\ 4),\\
	1 & \text{if } \quad  i=2 \text{ or } i=3 \quad (\mathrm{mod}\ 4).	
	\end{cases}
	\end{equation*}
\end{enumerate}
\end{lemma}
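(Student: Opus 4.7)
The plan is to treat the defining condition
$\theta_{i-2}-(\beta+1)\theta_{i-1}+(\beta+1)\theta_{i}-\theta_{i+1}=0$
as a linear recurrence of order three with constant coefficients, and to analyze it through its characteristic polynomial. Substituting $\theta_i=r^i$ gives the characteristic equation $r^3-(\beta+1)r^2+(\beta+1)r-1=0$, which factors nicely as
$(r-1)(r^2-\beta r+1)=0$.
Thus the roots are $1$ together with $q$ and $q^{-1}$, where $q+q^{-1}=\beta$. The four cases in the statement correspond precisely to the four ways the three roots can coincide over $\overline{\mathbb{F}}$, depending on whether $\beta=\pm 2$ and on the characteristic of $\mathbb{F}$.

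The solution space of the recurrence has dimension at most $3$, since any sequence satisfying it is determined by the initial triple $(\theta_0,\theta_1,\theta_2)$ (using $d\geq 3$). Consequently, in each case it suffices to produce three linearly independent sequences of the claimed form that solve the recurrence; every solution is then uniquely expressible in the given shape, with the coefficients $\alpha_1,\alpha_2,\alpha_3$ recovered from the initial triple by solving a $3\times 3$ linear system. In case (i), since $\beta\ne\pm2$, the three roots $1,q,q^{-1}$ are mutually distinct, so the sequences $\{1\}$, $\{q^i\}$, $\{q^{-i}\}$ are linearly independent by a Vandermonde argument applied to their initial triples, and each manifestly satisfies the recurrence.

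For the double-root cases (ii) and (iii), the characteristic polynomial degenerates to $(r-1)^3$ and $(r-1)(r+1)^2$ respectively. The third solution in each case is obtained by the usual trick of differentiating or taking finite differences at the repeated root; thus $i$ and $i(i-1)/2$ work in case (ii), while $i(-1)^i$ works in case (iii). A direct substitution verifies each such solution, with the hypothesis $\operatorname{Char}(\mathbb{F})\ne2$ entering only to make division by $2$ legitimate. Linear independence follows from the initial triples forming a triangular matrix with nonzero diagonal entries.

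The main obstacle is case (iv), where $\beta=0$ and $\operatorname{Char}(\mathbb{F})=2$. Here the factorization collapses to $(r-1)^3$ again (since $r^2+1=(r-1)^2$ in characteristic $2$), yet the usual solution $i(i-1)/2$ is unavailable because $2$ is not invertible. The resolution is to use the $\{0,1\}$-valued sequence defined by $\binom{i}{2}\equiv 0$ for $i\equiv 0,1\pmod 4$ and $\binom{i}{2}\equiv 1$ for $i\equiv 2,3\pmod 4$. This requires a direct check that the four-periodic pattern $(0,0,1,1,0,0,1,1,\ldots)$ satisfies $\theta_{i-2}+\theta_{i-1}+\theta_{i}+\theta_{i+1}=0$ in characteristic $2$, which reduces to four elementary verifications as $i$ runs through the residues modulo $4$. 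Once this is in hand, the triple $\{1,i,\binom{i}{2}\}$ has linearly independent initial values $(1,1,1),(0,1,0),(0,0,1)$, and the dimension argument closes the final case.
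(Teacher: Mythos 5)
Your proof is correct. Note that the paper itself gives no argument for this lemma: it is stated in the appendix with a ``cf.'' citation to \cite{2001TerLAA} (Lemma 9.2 there), so your characteristic-polynomial treatment is a complete, self-contained substitute rather than a paraphrase of anything in the text, and it is the natural way to prove the statement. The essential points all check out: since the recurrence $\theta_{i-2}-(\beta+1)\theta_{i-1}+(\beta+1)\theta_i-\theta_{i+1}=0$ ($2\le i\le d-1$) expresses $\theta_{i+1}$ in terms of the three preceding terms, the solution space is exactly three-dimensional (isomorphic to $\mathbb{F}^3$, resp.\ $\overline{\mathbb{F}}^3$ in case (i), via the initial triple); the characteristic polynomial factors as $(r-1)(r^2-\beta r+1)$ with roots $1,q,q^{-1}$; and in each of the four cases your three exhibited sequences do satisfy the recurrence (in case (iv) the four-periodic pattern works because any four consecutive terms exhaust one period, summing to $2=0$ in characteristic $2$) and have linearly independent initial triples, so they span. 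Two cosmetic remarks: in case (iii) the initial triples $(1,1,1)$, $(1,-1,1)$, $(0,-1,2)$ of $1$, $(-1)^i$, $i(-1)^i$ are not literally a triangular matrix as you assert, but their determinant equals $-4$, which is nonzero exactly because $\operatorname{Char}(\mathbb{F})\ne 2$, so the independence claim stands; and in case (i) it is worth saying explicitly that $\beta\ne\pm2$ forces $q\ne\pm1$, which is what makes $1,q,q^{-1}$ mutually distinct and the Vandermonde determinant nonzero.
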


\begin{lemma}[cf. {\cite[Lemma 9.4]{2001TerLAA}}]\label{lem:rec quotient}
%Let $\beta$ denote a scalar in $\mathbb{F}$. 
Assume that the scalars $\{\theta_i\}^d_{i=0}$ are mutually distinct and $\beta$-recurrent.
Pick any integers $i,j,r,s$ $(0\leq i,j,r,s\leq d)$ such that $i+j=r+s$, $r\ne s$.
Then {\rm(i)}--{\rm(iv)} hold below.
\begin{enumerate}[\normalfont(i)]
\item Suppose $\beta\ne \pm2$. 
	Then
	\begin{equation*}
		\frac{\theta_i-\theta_j}{\theta_r-\theta_s} = \frac{q^i-q^j}{q^r-q^s},
	\end{equation*}
	where $q+q^{-1}=\beta$.
\item Suppose $\beta = 2$ and $\operatorname{Char}(\mathbb{F})\ne 2$.
	Then
	\begin{equation*}
		\frac{\theta_i-\theta_j}{\theta_r-\theta_s} = \frac{i-j}{r-s}.
	\end{equation*}
\item Suppose $\beta = -2$ and $\operatorname{Char}(\mathbb{F})\ne 2$.
	Then
	\begin{equation*}
		\frac{\theta_i-\theta_j}{\theta_r-\theta_s} = 
		\begin{cases}
		(-1)^{i+r}(i-j)/(r-s) & \qquad \text{if \quad $i+j$ is even};\\
		(-1)^{i+r} & \qquad \text{if \quad $i+j$ is odd}.
		\end{cases}
	\end{equation*}
\item Suppose $\beta = 0$ and $\operatorname{Char}(\mathbb{F}) = 2$.
	Then
	\begin{equation*}
		\frac{\theta_i-\theta_j}{\theta_r-\theta_s} = 
		\begin{cases}
		0 & \qquad \text{if} \quad i=j;\\
		1 & \qquad \text{if} \quad i \ne j.
		\end{cases}
	\end{equation*}
\end{enumerate}
\end{lemma}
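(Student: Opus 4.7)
The proof strategy is uniform across the four cases: I would invoke Lemma~\ref{lem:rec seq q-form} to write $\theta_i$ in closed form in terms of parameters $\alpha_1,\alpha_2,\alpha_3$, then factor the difference $\theta_i - \theta_j$ so as to isolate a factor that depends on the pair $(i,j)$ only through the sum $i+j$. Because $i+j = r+s$, that factor will match the analogous factor of $\theta_r - \theta_s$ and cancel in the ratio, leaving exactly the expression claimed.

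In case (i), with $\theta_i = \alpha_1 + \alpha_2 q^i + \alpha_3 q^{-i}$, a short manipulation gives
\[
\theta_i - \theta_j = (q^i - q^j)\bigl(\alpha_2 - \alpha_3 q^{-i-j}\bigr),
\]
so the constraint $i+j = r+s$ immediately yields $(\theta_i-\theta_j)/(\theta_r-\theta_s) = (q^i-q^j)/(q^r-q^s)$. Case (ii) is handled in the same spirit using the identity $i(i-1) - j(j-1) = (i-j)(i+j-1)$, which produces the factorization $\theta_i - \theta_j = (i-j)\bigl[\alpha_2 + \alpha_3(i+j-1)/2\bigr]$, whose bracketed factor depends on $(i,j)$ only through $i+j$.

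For case (iii), I would split on the parity of $i+j$. If $i$ and $j$ have the same parity, the $\alpha_2$-terms cancel and $\theta_i - \theta_j = \alpha_3(-1)^i(i-j)$; if they have opposite parities, using $(-1)^j = -(-1)^i$ gives $\theta_i - \theta_j = (-1)^i\bigl[2\alpha_2 + \alpha_3(i+j)\bigr]$. In each subcase I combine with the analogous expression for $\theta_r - \theta_s$ and use $i+j = r+s$ to recover the two-line formula; the sign $(-1)^{i+r}$ arises from the ratio $(-1)^i/(-1)^r$, which makes sense since $\operatorname{Char}(\mathbb{F})\ne 2$.

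Case (iv) is where I expect the only real friction. Here $\binom{i}{2}$ is periodic of period four modulo~$2$, so the clean factoring of the previous cases is not available. However, the periodicity together with the distinctness hypothesis forces the pair $(i \bmod 2, \binom{i}{2}\bmod 2)$ to take four different values, which bounds $d \leq 3$; since $d \geq 3$ by standing hypothesis, $d = 3$. The problem then reduces to a finite verification: I would tabulate all differences $\theta_i - \theta_j$ for $0 \leq i \ne j \leq 3$, check that each equals one of $\alpha_2$, $\alpha_3$, or $\alpha_2+\alpha_3$, and observe directly that any two such differences with the same index-sum coincide, yielding the stated ratio of~$1$. The main obstacle is really just the careful bookkeeping of this small table; no new idea is required.
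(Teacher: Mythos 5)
Your proof is correct. Note that the paper does not prove Lemma \ref{lem:rec quotient} at all; it is recalled from \cite[Lemma 9.4]{2001TerLAA}, so your argument supplies a self-contained proof along exactly the expected lines: write $\theta_i$ in the closed form of Lemma \ref{lem:rec seq q-form}, factor $\theta_i-\theta_j$ as an antisymmetric factor times a factor depending on $(i,j)$ only through $i+j$, and cancel the latter using $i+j=r+s$ (the cancelled factor is nonzero because it divides $\theta_r-\theta_s\ne 0$, which in cases (ii) and (iii) also guarantees $r-s\ne 0$ in $\mathbb{F}$, so the stated ratios make sense). Your handling of case (iv) --- periodicity modulo $4$ together with distinctness forcing $d=3$, followed by the finite table check --- is sound and mirrors the $d=3$ reduction the paper itself performs in Case (iv) of the proof of Theorem \ref{thm:four families}.
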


\section*{Acknowledgement}
The author expresses his deepest gratitude to Paul Terwilliger for many conversations about this paper.

%%%%%%%%%%%%%%%%%%%%%%%%%%%%%%%%%%%%%%%%%%%%

%\bigskip

%\bigskip
%\noindent
%Jae-Ho Lee\\
%Department of Mathematics and Statistics\\
%University of North Florida\\
%1 UNF Drive\\
%Jacksonville, Florida, 32224 USA\\
%Email: {jaeho.lee@unf.edu}\\

\end{document}